\definecolor{blue}{rgb}{0.1,0.2,0.5}
\definecolor{brown}{rgb}{0.6,0.6,0.2}
\newcommand*\patchAmsMathEnvironmentForLineno[1]{%
  \expandafter\let\csname old#1\expandafter\endcsname\csname #1\endcsname
  \expandafter\let\csname oldend#1\expandafter\endcsname\csname end#1\endcsname
  \renewenvironment{#1}%
     {\linenomath\csname old#1\endcsname}%
     {\csname oldend#1\endcsname\endlinenomath}}%
\newcommand*\patchBothAmsMathEnvironmentsForLineno[1]{%
  \patchAmsMathEnvironmentForLineno{#1}%
  \patchAmsMathEnvironmentForLineno{#1*}}%
\theoremstyle{plain}
\newtheorem{theorem}{Theorem}
\newcommand{\newtheoremwithcrefformat}[2]{%
  \newtheorem{#1}[theorem]{#2}%
  \crefformat{#1}{##2\MakeUppercase#1~##1##3}%
  \Crefformat{#1}{##2\MakeUppercase#1~##1##3}%
}
\newcommand{\newseptheoremwithcrefformat}[2]{%
  \newtheorem{#1}{#2}%
  \crefformat{#1}{##2\MakeUppercase#1~##1##3}%
  \Crefformat{#1}{##2\MakeUppercase#1~##1##3}%
}
\newtheorem*{problem*}{Problem}
\newtheorem*{conjecture*}{Conjecture}
\theoremstyle{definition}
\newtheorem*{example*}{Example}
\theoremstyle{nonumberplain}
\newenvironment{claimproof}{\noindent {\emph{Proof of Claim.}}}{\hfill$\lrcorner$\smallskip}
\newcommand{\eps}{\varepsilon}
\newcommand{\cC}{ \mathcal{C} }
\newcommand{\cL}{ \mathcal{L} }
\newcommand{\sL}{ \mathsf{L} }
\newcommand{\Var}{ \textsf{Var} }
\newcommand{\dist}{ \operatorname{dist}_k }
\newcommand{\oa}{ \overline{a} } 
\newcommand{\vphi}{\varphi} 
\newcommand{\oS}{\overrightarrow{S}}
\newcommand{\N}{\mathbb{N}}
\renewcommand{\epsilon}{\varepsilon}
\newcommand{\Oh}{\mathcal{O}}
\newcommand{\tx}{ \widetilde{x} }
\newcommand{\NEQ}{\mathsf{NEQ}}
\newcommand{\C}{\mathsf{Comp}}
\renewcommand{\leq}{\leqslant}
\renewcommand{\geq}{\geqslant}
\newcommand{\CSP}[1]{\textsc{CSP}(\ensuremath{#1})\xspace}
\newcommand{\listcoloring}[1]{\textsc{List \ensuremath{#1}-Coloring}\xspace}
\newcommand{\sat}[1]{\textsc{\ensuremath{#1}-CNF-Sat}\xspace}
\newcommand{\cc}{c^*}
\newcommand{\dd}{d^*}
\newcommand{\betterlb}{lower bound structure\xspace}
\newcommand{\containment}{\ensuremath{\mathsf{NP \subseteq coNP/poly}}\xspace}
\begin{document}
\title{Kernelization for list $H$-coloring\\for graphs
with small vertex cover}

\author[1,2]{Marta Piecyk\thanks{MP was funded by Polish National Science Centre, grant no. 2022/45/N/ST6/00237.}}
\author[ ]{Astrid Pieterse}
\author[1,3]{Pawe{\l} Rz\k{a}\.zewski}
\author[4]{Magnus Wahlstr\"om}
\affil[1]{Warsaw University of Technology}
\affil[2]{CISPA Helmholtz Center for Information Security}
\affil[3]{University of Warsaw}
\affil[4]{Royal Holloway University of London}
\date{}


\begin{titlepage}
\def\thepage{}
\thispagestyle{empty}
\maketitle
\begin{abstract}
  For two graphs $G$ and $H$, a \emph{homomorphism} from $G$ to $H$ is a mapping
  $\varphi \colon V(G) \to V(H)$ such that edges of $G$ map to edges of $H$.
  Additionally, given a list $\cL(v) \subseteq V(H)$ for every vertex $v \in V(G)$,
  a \emph{list homomorphism} from $(G,\cL)$ to $H$ is a homomorphism $\varphi$
  from $G$ to $H$ such that for every vertex $v \in V(G)$, it holds that $\varphi(v) \in \cL(v)$. 
  \textsc{List $H$-Coloring} is then the following problem: For a fixed graph $H$,
  given $(G,\cL)$ as input, is there a list homomorphism from $(G,\cL)$ to $H$? 
  Note that if $H$ is the complete graph on $q$ vertices, the problem is equivalent to \textsc{List $q$-Coloring}.

  We investigate the kernelization properties of \textsc{List $H$-Coloring} parameterized by the vertex cover number of $G$.
  That is, given an instance $(G,\cL)$
  and a vertex cover of $G$ of size $k$, can we reduce $(G,\cL)$ to an equivalent instance $(G',\cL')$ of \textsc{List $H$-Coloring}  where the size of $G'$ is bounded by a low-degree polynomial $p(k)$ in $k$?
  This question has been investigated previously by Jansen and Pieterse [Algorithmica 2019], who provided an upper bound, which turns out to be optimal if $H$ is a complete graph, i.e., for \textsc{List $q$-Coloring}.
  This result was one of the first surprising applications of the method of
  kernelization via bounded-degree polynomials.

  More generally, the kernelization of \textsc{List $H$-Coloring}
  parameterized by $k$ turns out to be equivalent to the kernelization
  of a related CSP (constraint satisfaction problem) on domain $V(H)$,
  parameterized by the number of variables; the limits of kernelization
  for CSPs under this parameter is an important but poorly understood question.

  We define two new integral graph invariants, denoted by $c^*(H)$ and $d^*(H)$, with $d^*(H)
  \leq c^*(H) \leq d^*(H)+1$, and show that the following hold.
  \begin{itemize}
  \item For every graph $H$, \textsc{List $H$-Coloring} has a kernel
    with $\mathcal{O}(k^{c^*(H)})$ vertices, hence size $\mathcal{O}(k^{c^*(H)} \log k)$,
    via a simple marking scheme.
  \item For every graph $H$, there is no kernel of size $\mathcal{O}(k^{d^*(H)-\varepsilon})$
    for any $\varepsilon > 0$, unless the polynomial hierarchy
    collapses.
  \item Furthermore, if $c^*(H) > d^*(H)$, then there is a kernel with
    $\mathcal{O}(k^{c^*(H)-\varepsilon})$ vertices where $\varepsilon \geq 2^{1-c^*(H)}$.
  \end{itemize}
  Additionally, we show that for some classes of graphs, including
  powers of cycles and graphs $H$ where $\Delta(H) \leq c^*(H)$ 
  (which in particular includes cliques), the
  bound $d^*(H)$ is tight, i.e., there is a kernel with $\mathcal{O}(k^{d^*(H)})$ vertices using
  the polynomial method. We conjecture that this holds in general.
\end{abstract}
\end{titlepage}
\section{Introduction}
For graphs $G$ and $H$, a \emph{homomorphism} from $G$ to $H$ is an edge-preserving mapping $\vphi: V(G) \to V(H)$, i.e., if $uv \in E(G)$, then $\vphi(u)\vphi(v)\in E(H)$.
If $G$ is given together with lists $\cL: V(G) \to 2^{V(H)}$ -- we will refer to them as \emph{$H$-lists} -- then a \emph{list homomorphism} $\vphi$ from $(G,\cL)$ to $H$ is a homomorphism from $G$ to $H$, which additionally respects lists, i.e., for every $v\in V(G)$ it holds that $\vphi(v)\in \cL(v)$.
To indicate that $\vphi$ is a list homomorphism from $G$ to $H$ respecting lists $\cL$, we will write $\vphi: (G,\cL)\to H$, and we write $(G,\cL)\to H$, to indicate that such a list homomorphism exists. For a fixed graph $H$, in the \listcoloring{H} problem, we are given a pair $(G,\cL)$, and we have to determine whether $(G,\cL)\to H$. We remark that in \listcoloring{H} it makes sense to allow graphs $H$ that have loops on vertices.

Note that if $H = K_q$, i.e., the complete graph on $q$ vertices, then \listcoloring{H} is equivalent to \listcoloring{q}.
Thus, \listcoloring{H} can be seen as a far-reaching generalization of \listcoloring{q}. For this reason, we will typically refer to the vertices of $H$ as \emph{colors}.

The complexity dichotomy for the non-list variant of the problem was shown in 1990 by Hell and Ne\v{s}et\v{r}il~\cite{HellN90hcol}: the problem is polynomial-time solvable if $H$ is bipartite or has a vertex with a loop, and \textsf{NP}-hard otherwise.
The dichotomy for the list version was shown by Feder, Hell, and Huang~\cite{DBLP:journals/jgt/FederHH03} (see also~\cite{DBLP:journals/jct/FederH98,DBLP:journals/combinatorica/FederHH99}). Here, the  \listcoloring{H} problem can be solved in polynomial time if $H$ is a so-called \emph{bi-arc graph} and otherwise it is \textsf{NP}-hard.

\paragraph{Kernelization of \listcoloring{H}.}
In this paper we are interested in kernelization  properties of the problem. Intuitively, we want to understand how much an instance of \listcoloring{H} can be compressed, in polynomial time, while preserving the answer.
Note that this problem makes sense only for graphs $H$ for which the problem is \textsf{NP}-hard, i.e., for non-bi-arc-graphs.
The following result of Chen, Jansen, Okrasa, Pieterse, Rzążewski~\cite{DBLP:journals/toct/ChenJOPR23} shows that in general no non-trivial compression is possible,
under standard complexity assumptions (i.e., unless \containment and the polynomial hierarchy collapses).

\begin{theorem}[\cite{DBLP:journals/toct/ChenJOPR23}]\label{thm:kernel-n}
Let $H$ be a non-bi-arc graph. Then \listcoloring{H} parameterized by the number of vertices of the input graph admits no generalized kernel of size $\Oh(n^{2-\eps})$, for any $\eps>0$, unless \containment.
\end{theorem}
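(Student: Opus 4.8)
The plan is to prove the lower bound via the \emph{cross-composition} framework of Bodlaender, Jansen, and Kratsch. Recall that to rule out a generalized kernel (equivalently, a polynomial compression into \emph{any} language) of bitsize $\Oh(n^{2-\eps})$ for a parameterized problem, it suffices to exhibit an \textsc{or}-cross-composition of cost $\Oh(\sqrt t)$: an algorithm that takes $t$ instances $x_{1},\dots,x_{t}$ of a fixed \textsf{NP}-hard problem $L$, all of the same size $s$ (we may assume this by padding and by fixing a suitable polynomial equivalence relation), and in time $\mathrm{poly}\bigl(\sum_i |x_i|\bigr)$ produces a single instance $(G,\cL)$ of \listcoloring{H} with $|V(G)| = \Oh(\sqrt t\cdot \mathrm{poly}(s))$ such that $(G,\cL)\to H$ if and only if $x_i\in L$ for at least one $i$. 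Indeed, a compression of \listcoloring{H} of bitsize $\Oh(n^{2-\eps})$ would then compress the \textsc{or} of $t$ equal-size instances of $L$ to bitsize $\Oh\bigl((\sqrt t\cdot\mathrm{poly}(s))^{2-\eps}\bigr)=\Oh(t^{1-\eps/2}\cdot\mathrm{poly}(s))$, which for fixed $s$ is sublinear in $t$, contradicting the (conditional) impossibility of \textsc{or}-distillation; hence \containment, and so the polynomial hierarchy collapses.

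The core of the construction is the standard ``grid selector'' trick for realizing an \textsc{or} over $t=p^2$ instances (with $p=\lceil\sqrt t\rceil$) using only $\Oh(p)=\Oh(\sqrt t)$ vertices. Index the inputs as $\{x_{i,j}:i,j\in[p]\}$, and encode all of them over one shared $\mathrm{poly}(s)$-size \emph{template} inside $G$: a global variable block of $\Oh(\mathrm{poly}(s))$ vertices carrying a single assignment to the common variable set, $\Oh(\mathrm{poly}(s))$ constraint-checking gadgets, $p$ constant-size ``row-switch'' gadgets and $p$ constant-size ``column-switch'' gadgets, and a constant-size selector that forces exactly one active row $i^\star$ and one active column $j^\star$. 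The constraints of instance $x_{i,j}$ are then encoded \emph{in the edges and lists} joining row-switch $i$, column-switch $j$, and the checking gadgets, wired so that they are enforced when $(i,j)=(i^\star,j^\star)$ and are vacuous otherwise, the non-selected switches sitting in a ``kill'' state that satisfies, for free, every constraint attached to them. Since edges are not counted in $n$ and there are only $\Oh(t\cdot\mathrm{poly}(s))$ potential edges, this is affordable, and the vertex count is $\Oh(p+\mathrm{poly}(s))=\Oh(\sqrt t\cdot\mathrm{poly}(s))$. A list homomorphism then exists iff some active cell $(i^\star,j^\star)$ admits a satisfying assignment for $x_{i^\star,j^\star}$, i.e.\ iff some $x_{i,j}\in L$.

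To make this work for an arbitrary non-bi-arc $H$ we rely on the \textsf{NP}-hardness machinery for \listcoloring{H}. Since $H$ is not a bi-arc graph, \listcoloring{H} is \textsf{NP}-hard, and the hardness proofs of Feder, Hell, and Huang~\cite{DBLP:journals/jgt/FederHH03} (and their refinements) in fact provide, for every fixed such $H$, a reduction from a convenient \textsf{NP}-hard bounded-arity constraint problem $L$ to \listcoloring{H} in which each variable is modeled by a gadget of size $\Oh(1)$ (depending only on $H$) with a bounded list, each constraint by a gadget of size $\Oh(1)$, and ``equality wires'' transmit a bit faithfully along induced paths. We take this $L$ as the source problem, and we use exactly these gadgets as the building blocks of the template described above; the selection and ``kill''/activation mechanisms are then assembled from the same toolkit. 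This step may require following the structure theory of non-bi-arc graphs and splitting into a few base cases (reflexive and irreflexive odd cycles, cliques, and the remaining minimal obstructions), but in each case one obtains the required constant-size logical gadgets.

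The main obstacle is precisely this uniform gadgetry: building faithful Boolean wires, forcing exactly-one-active row and column using only $\Oh(\sqrt t)$ vertices in total, and — the delicate point — guaranteeing that a non-active cell imposes \emph{no} constraint whatsoever on the colors used anywhere else, whatever those colors are. Making the ``kill state'' behave correctly \emph{simultaneously} with faithful bit propagation, from the limited logic available in an arbitrary non-bi-arc target graph, is where the case analysis and the bulk of the technical effort lie; the remaining ingredients (the padding and polynomial equivalence relation, the vertex counting, and invoking the cross-composition theorem) are routine.
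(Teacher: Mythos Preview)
The paper does not prove this theorem: it is quoted verbatim as a result of Chen, Jansen, Okrasa, Pieterse, and Rz\k{a}\.zewski~\cite{DBLP:journals/toct/ChenJOPR23} and used as a black box. So there is no ``paper's own proof'' to compare against here.

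That said, your plan is the right \emph{shape} for how such results are proven, and indeed matches the strategy of~\cite{DBLP:journals/toct/ChenJOPR23}: a degree-$2$ \textsc{or}-cross-composition via a $\sqrt t \times \sqrt t$ table of instances, with row/column selector gadgets and a shared template so that only $\Oh(\sqrt t\cdot\mathrm{poly}(s))$ vertices are needed while the $\Theta(t)$ instance-specific information lives in the edges. Two remarks. First, a literally ``constant-size selector that forces exactly one active row $i^\star$ and one active column $j^\star$'' cannot exist when $p$ is unbounded and $H$ is fixed: a constant-size $H$-list gadget has only boundedly many homomorphic images, so it cannot encode one of $p$ choices. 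What one actually does is let each of the $p$ row switches (resp.\ column switches) \emph{independently} pick active/dormant and then argue that in a yes-certificate at least one pair $(i^\star,j^\star)$ is active and its instance is satisfied; enforcing ``exactly one'' is unnecessary for an \textsc{or}-composition. Second, you correctly flag the real difficulty: manufacturing, for an \emph{arbitrary} non-bi-arc $H$, a dormant colour for the switches that trivially satisfies every attached constraint, together with faithful equality wires. In~\cite{DBLP:journals/toct/ChenJOPR23} this is where essentially all the work goes, via a uniform gadget toolkit built from the structural characterisation of non-bi-arc graphs; your sketch names this step but does not carry it out, so as written it is a plan rather than a proof.
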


Effectively, this results states that unless \containment,
an instance of \listcoloring{H} with $n$ vertices
cannot be compressed into $\mathcal{O}(n^{2-\eps})$ bits in polynomial time, regardless of encoding, 
without losing track of its yes/no-status. See \cref{sec:prel} for full definitions,
and~\cite{Book_kernelization_FLSZ19} for more on kernelization.

However, \cref{thm:kernel-n} does not exclude a possibility of non-trivial compression of instances with some restricted structure.
In particular, we are interested in instances with small vertex cover. Thus, from now on we assume that the input graph is given with a set $X$ of size at most $k$, such that $G - X$ has no edges.

It is easy to notice that such instances can be in polynomial time transformed into equivalent ones, where the number of vertices depends on $k$ only (such a compressed instance is called a \emph{kernel}).
Indeed, every vertex from $V(G) \setminus X$ can be characterized by (i) its neighborhood in $X$ and (ii) its list. Thus, there are at most $2^k \cdot 2^{|V(H)|} = \Oh(2^k)$ possible ''types'' of vertices in $V(G) \setminus X$, and it is sufficient to leave one vertex of each type (and the set $X$).
However, the number of vertices in such an instance is bounded by an exponential function of $k$. Can it be improved to a polynomial? As we discuss later, it turns out that this is indeed true. Furthermore, we are interested in bounding the degree of this polynomial. More precisely we study the the following problem.

\begin{problem*}
For every non-bi-arc graph $H$, determine the value $\delta = \delta(H)$ such that the  \listcoloring{H} problem, parameterized by the size $k$ of a vertex cover,
\begin{enumerate}
\item  admits a kernel with $\Oh(k^{\delta})$ vertices and edges,
\item does not have a kernel  of size $\Oh(k^{\delta-\varepsilon})$, for any $\varepsilon > 0$, unless \containment.
\end{enumerate}
\end{problem*}

This problem was previously studied by Jansen and Pieterse~\cite{JansenP19Coloring}, who showed that $\delta(H)$ (using the notation from the problem above) is upper-bounded by the maximum degree of a vertex in $H$, denoted by $\Delta(H)$.
(We remark that the result of Jansen and Pieterse~\cite{JansenP19Coloring} is stated for non-list variant of the problem, but the approach easily generalizes to the list variant).

\begin{theorem}[\cite{JansenP19Coloring}]
\label{thm:kernel-delta}
Then \listcoloring{H} parameterized by the size $k$ of a minimum vertex cover admits a kernel with $\Oh(k^{\Delta(H)})$ vertices and edges.
\end{theorem}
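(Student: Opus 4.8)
The plan is to use the framework of \emph{kernelization via low-degree polynomials}. Fix $H$ and write $\Delta=\Delta(H)$ and $q=|V(H)|$ (a constant). We are given $(G,\cL)$ together with a vertex cover $X$, $|X|\le k$, so that $G-X$ is edgeless. Encode a candidate coloring of $X$ by Boolean variables $x_{u,a}$ for $u\in X$ and $a\in V(H)$, meaning $x_{u,a}=1\iff \varphi(u)=a$; there are $n:=|X|\cdot q=\Oh(k)$ of them. Since the kernel will retain all of $X$ together with its edges and lists, it suffices to reason about assignments that already encode a list homomorphism of $G[X]$; call these \emph{admissible}. Each $v\in V(G)\setminus X$ imposes on the admissible assignments the constraint $C_v$: \emph{$\varphi$ extends to $v$}, i.e.\@ $\exists c\in\cL(v)$ with $\varphi(u)\in N_H(c)$ for all $u\in N(v)$. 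Thus $(G,\cL)\to H$ iff some admissible assignment satisfies every $C_v$, and it is enough to produce a small subset $M\subseteq V(G)\setminus X$ such that every admissible assignment satisfying all $C_v$ with $v\in M$ satisfies all $C_v$; the output kernel is then $G[X\cup M]$ with the inherited lists. Equivalence is immediate: a list homomorphism of $G$ restricts to $G[X\cup M]$, and a list homomorphism of $G[X\cup M]$ gives an admissible coloring of $X$ that, by the choice of $M$, satisfies every $C_v$ and hence extends to all of $G$.

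The selection of $M$ is by the standard greedy-marking argument, whose cost is controlled by polynomial degree. Suppose we can attach to each $C_v$ a polynomial $p_v$ over a fixed field, of degree at most $d$, such that for every admissible assignment $x$ we have $p_v(x)=0 \iff x$ satisfies $C_v$. Process the vertices $v\in V(G)\setminus X$ in any order, maintaining $M$ (initially empty); add $v$ to $M$ whenever there is an admissible assignment satisfying $C_w$ for all currently marked $w$ but violating $C_v$. On termination, any admissible assignment satisfying all $C_w$, $w\in M$, satisfies all $C_v$ (else the first violated $v$ would have been marked). To bound $|M|$: ordering $M=\{v_1,\dots,v_t\}$ by the time of marking, the witness $x^{(j)}$ for $v_j$ satisfies $C_{v_1},\dots,C_{v_{j-1}}$ and violates $C_{v_j}$, so $p_{v_i}(x^{(j)})=0$ for $i<j$ while $p_{v_j}(x^{(j)})\neq 0$; a downward induction then shows $p_{v_1},\dots,p_{v_t}$ are linearly independent as functions on the admissible set, hence $t$ is at most the dimension of the space of multilinear polynomials of degree $\le d$ in $n$ variables, which is $\Oh(n^{d})=\Oh(k^{d})$. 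With $d=\Delta$ this yields $|M|=\Oh(k^{\Delta})$ vertices in the kernel.

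It remains to realize every $C_v$ by a polynomial of degree at most $\Delta(H)$ — this is the technical core and, I expect, the main obstacle. The obvious choice
\[
p_v(x)=\prod_{c\in\cL(v)}\Bigl(\sum_{u\in N(v)}\ \sum_{a\notin N_H(c)} x_{u,a}\Bigr)
\]
vanishes on an admissible $x$ exactly when $C_v$ is satisfied, but its degree is $|\cL(v)|$, which may be as large as $q$. The key structural observation that should bring the exponent down to $\Delta(H)$ is that as soon as a \emph{single} neighbour $u\in N(v)$ receives a colour $a$, the colours still available for $v$ are confined to $\cL(v)\cap N_H(a)$, a set of size at most $\Delta(H)$; equivalently, any obstruction to extending $\varphi$ to $v$ is witnessed by the colour classes of at most $\Delta(H)$ neighbours of $v$ together with the already-known mutual incompatibility of those classes in $H$. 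Exploiting this — "pivoting" on one neighbour and absorbing its already-forced contribution — lets one rewrite $C_v$ as a degree-$\Delta(H)$ polynomial test, but doing so cleanly is delicate, especially for vertices with large or full lists: the extreme case $H=K_q$ with $\cL(v)=V(H)$ reproduces ordinary $q$-Coloring, where the right polynomial has to encode "$N(v)$ is not rainbow" by counting monochromatic patterns inside $N(v)$ rather than by listing missing colours, and this is precisely where the exponent $\Delta(K_q)=q-1$ (rather than $q$) appears. Once the degree-$\Delta(H)$ representation is in hand, the rest is bookkeeping: the kernel $G[X\cup M]$ has $\Oh(k^{\Delta(H)})$ vertices, and with a little extra care — reducing, for each retained $v$, to the case where its constraint depends on few neighbours, and using that $X$ is a vertex cover — the number of edges is bounded by $\Oh(k^{\Delta(H)})$ as well.
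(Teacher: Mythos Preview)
Your overall plan—encode colorings of $X$ by Boolean variables, capture each extendability constraint $C_v$ by low-degree polynomials, and bound the kernel via linear independence—is exactly the polynomial method of Jansen and Pieterse, and is the framework the paper adopts in \cref{thm:kernelization-by-polynomials}. But the proposal has two genuine gaps.

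First, your greedy marking procedure is not polynomial-time: deciding ``is there an admissible assignment satisfying all $C_w$, $w\in M$, but violating $C_v$'' is itself a \listcoloring{H} instance and hence \textsf{NP}-hard in general. The actual algorithm does the selection on the polynomial side: once you have the family of polynomials, find a basis by Gaussian elimination (this is \cref{thm:d-poly-root-csp}) and keep only the vertices contributing a basis element. Your linear-independence argument then supplies both the size bound and the algorithm simultaneously—but only once the polynomials exist.

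Second, and this is the real gap, you never construct a degree-$\Delta(H)$ polynomial for $C_v$; you explicitly flag it as ``the technical core'' and ``delicate'' and then leave it. Without this the proof is only an outline. Two remarks here. Your pivoting intuition (fix one $u_0\in N(v)$, observe that conditioned on $\varphi(u_0)=a$ only the $\le\Delta(H)$ colours in $\cL(v)\cap N_H(a)$ remain relevant) is correct, but na\"ively gluing the conditional polynomials back together via $\sum_a y_{u_0,a}\cdot(\ldots)$ costs an extra degree and lands you at $\Delta(H)+1$; shaving that last $+1$ is precisely the work you are skipping. Moreover, you should not insist on a \emph{single} polynomial $p_v$ with the strict equivalence $p_v(x)=0\iff C_v$ holds. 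The paper's framework (and the original Jansen--Pieterse argument) associates to each $v$ a \emph{set} of polynomials—one for each small ``forbidden pattern'' $S$ of colours on a bounded-size subset of $N(v)$—such that $C_v$ holds iff all of them vanish; the basis argument of \cref{thm:d-poly-root-csp} handles this uniformly and the kernel-size bound is unaffected. Relaxing to multiple polynomials is what makes the degree-$\Delta(H)$ construction go through cleanly.
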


Furthermore, Jansen and Pieterse~\cite{JansenP19Coloring} proved that for \listcoloring{q}, i.e., for the case that $H$ is a complete graph, the bound given by \cref{thm:kernel-delta} is actually tight.


Their approach  uses a \emph{polynomial method}, which can be understood in two steps.
First, instances $(G,\cL)$ of \listcoloring{H} with a vertex cover $X$ are interpreted as 
CSPs, where $X$ is the variable set and vertices of $V(G) \setminus X$ act as
\emph{constraints} on the legal colorings of $X$; see below.
Next, these constraints are encoded into bounded-degree polynomials over $X$,
such that in order to preserve the solution space
it is enough to keep a basis of this set of polynomials.
Thus, an encoding of \listcoloring{H} into polynomials of degree $d$ implies a kernel with $\Oh(k^d)$ constraints for the CSP,
and a kernel with $\Oh(k^d)$ vertices and edges for \listcoloring{H}.
In this paper we continue this line of research, aiming to provide tight bounds of other graphs $H$, with possible loops on vertices.

\paragraph{Our contribution.}
Let $H$ be a graph that is not a bi-arc graph. 
We define two integer-valued quantities $\cc(H)$ and $\dd(H)$ (see \cref{sec:prel}).
We show that these values always differ by at most one -- more precisely, for every graph it holds that $\dd(H) \leq \cc(H) \leq \dd(H)+1$ (see \cref{lem:c-and-d}).
Interestingly, both possibilities are attainable.
The values $\cc(H)$ and $\dd(H)$ upper and lower bound the kernelization bounds for the problem, respectively, as follows.

\begin{restatable}{theorem}{thmmarking}
\label{thm:marking}
Let $H$ be a graph. Then \listcoloring{H} parameterized by the size $k$ of a vertex cover admits a kernel with $\Oh(k^{c^*(H)})$ vertices and edges.
\end{restatable}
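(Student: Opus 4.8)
The plan is to read the instance as a constraint-satisfaction problem on the vertex cover and then shrink it by a direct marking/reconstruction step whose output size is governed by $\cc(H)$. Let $(G,\cL)$ be the input and $X$ the given vertex cover, $|X|\leq k$. Since $V(G)\setminus X$ is independent, every $v\in V(G)\setminus X$ has $N(v)\subseteq X$, and its only effect on the instance is to constrain list homomorphisms of $(G[X],\cL|_X)$ into $H$: a list homomorphism $\psi\colon (G[X],\cL|_X)\to H$ extends to $v$ if and only if $\cL(v)\cap\bigcap_{u\in N(v)}N_H(\psi(u))\neq\emptyset$, that is, some $c\in\cL(v)$ is $H$-adjacent to every color in $\psi(N(v))$; hence $(G,\cL)\to H$ iff some list homomorphism of $(G[X],\cL|_X)$ extends to all of $V(G)\setminus X$ simultaneously. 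Call a pair $(A,L)$ with $A,L\subseteq V(H)$ \emph{bad} if no $c\in L$ is $H$-adjacent to every vertex of $A$, equivalently $L\cap\bigcap_{a\in A}N_H(a)=\emptyset$; then $\psi$ fails to extend to $v$ precisely when $(\psi(N(v)),\cL(v))$ is bad. The point of $\cc(H)$ is that badness always has a bounded-size certificate: whenever $(A,L)$ is bad (and $L$ is a list that can actually occur in a suitably preprocessed instance) there is $A'\subseteq A$ with $|A'|\leq\cc(H)$ for which $(A',L)$ is already bad --- one may take any inclusion-minimal such $A'$, and $\cc(H)$ is, essentially by its definition in \cref{sec:prel}, an upper bound on the sizes of all such minimal bad configurations. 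I would recall this definition together with the certificate property and treat them as the only input about the structure of $H$.

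The reduction is then: output $(G',\cL')$ obtained from $X$, the induced graph $G[X]$, and the lists $\cL|_X$ by adding, for every $Z\subseteq X$ with $|Z|\leq\cc(H)$ and every $L\subseteq V(H)$ such that the original instance contains some $v$ with $Z\subseteq N(v)$ and $\cL(v)=L$, a single new vertex $w_{Z,L}$ with neighborhood $Z$ and list $L$. This is clearly computable in polynomial time, $X$ is still a vertex cover of $G'$ of size $\leq k$, the number of vertices of $G'$ is $k+2^{|V(H)|}\sum_{i\leq\cc(H)}\binom{k}{i}=\Oh(k^{\cc(H)})$, and since each new vertex has degree at most $\cc(H)$ and $|E(G[X])|\leq\binom{k}{2}$, the number of edges is also $\Oh(k^{\cc(H)})$ (using $\cc(H)\geq 2$, which one may assume since otherwise \listcoloring{H} is polynomial-time solvable).

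It remains to verify equivalence. If $\vphi\colon(G,\cL)\to H$, then $\vphi|_X$ is a list homomorphism of $(G[X],\cL|_X)$, and for each $w_{Z,L}$ the color received by its witnessing vertex $v$ lies in $L$ and is $H$-adjacent to all of $\vphi(N(v))\supseteq\vphi(Z)$, hence is a legal color for $w_{Z,L}$; so $(G',\cL')\to H$. Conversely, if $\vphi'\colon(G',\cL')\to H$, set $\psi:=\vphi'|_X$. Were $\psi$ not to extend to some $v\in V(G)\setminus X$, the pair $(\psi(N(v)),\cL(v))$ would be bad, so by the certificate property there would be $Z\subseteq N(v)$ with $|Z|\leq\cc(H)$ and $(\psi(Z),\cL(v))$ bad; but then $(Z,\cL(v))$ was added to $G'$ (witnessed by $v$ itself), and $\vphi'(w_{Z,\cL(v)})\in\cL(v)$ is $H$-adjacent to all of $\psi(Z)$, contradicting badness of $(\psi(Z),\cL(v))$. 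Hence $\psi$ extends to all of $V(G)\setminus X$, i.e.\ $(G,\cL)\to H$.

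The main obstacle is not this marking step, which is routine, but pinning down $\cc(H)$ and its certificate property: one must delimit exactly which bad configurations $(A,L)$ are realizable --- so that $\cc(H)$ is no larger than the lower bound $\dd(H)$ (via \cref{lem:c-and-d}) permits --- handle loops and reflexive colors correctly in the notion of $H$-adjacency, and, if needed, first normalize the instance (say by list propagation) so that the reformulation above is valid and the set of occurring lists is as required. Given the definition and the certificate property, the construction and the correctness argument above go through directly.
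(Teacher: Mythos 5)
Your proposal is correct and uses essentially the same marking scheme as the paper: for each pair $(Z,L)$ with $Z\subseteq X$, $|Z|\leq c^*(H)$, $L\subseteq V(H)$, and some witness $v\in V(G)\setminus X$ with $Z\subseteq N(v)$, $\cL(v)=L$, retain a representative constraint, and then use the definition of $c^*(H)$ to argue that any failure of extension is already witnessed by a retained vertex (the paper packages this as \cref{obs:minimal-neighbors}). The only cosmetic difference is that the paper reuses an existing vertex $v$ of $G$ (keeping only its edges to $Z$, so $G'$ stays a subgraph of $G$) while you introduce a fresh vertex $w_{Z,L}$; both are fine, and your one small glossed-over step—passing from a minimal bad $A'\subseteq\psi(N(v))$ in $V(H)$ to a preimage $Z\subseteq N(v)$ of the same size with $\psi(Z)=A'$—is routine.
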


\begin{restatable}{theorem}{thmlower}
\label{thm:lower-bound}
Let $H$ be a fixed non-bi-arc graph.
Then \listcoloring{H} parameterized by the size $k$ of a vertex cover does not have a kernel with $\Oh(k^{\dd(H)-\varepsilon})$ vertices and edges, for any $\varepsilon > 0$, unless \containment.
\end{restatable}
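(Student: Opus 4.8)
The plan is to prove the lower bound via a linear-parameter-preserving reduction (or ``cross-composition''/``weak composition'') from a hard problem, matching the pattern established for \listcoloring{q} by Jansen and Pieterse and for the general CSP setting by Chen, Jansen, Okrasa, Pieterse, and Rz\k{a}\.zewski. First I would unfold the definition of $\dd(H)$: since $H$ is not a bi-arc graph, there is a small ``gadget structure'' inside $H$ -- concretely some obstruction certifying NP-hardness of \listcoloring{H}, together with a family of colors and lists witnessing that $\dd(H)$ is the relevant exponent -- that lets us encode an instance of a hard CSP. I expect $\dd(H)$ to be defined so that $H$ admits, on a constant-size variable block, a family of constraints realizable as vertices of $V(G)\setminus X$ whose ``independence'' forces $\Omega(k^{\dd(H)})$ distinct vertices to be kept by any kernel. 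The reduction should take $t$ instances of, say, $d$-CNF-SAT or \textsc{$d$-Not-All-Equal-SAT} (with $d = \dd(H)$) on a common variable universe, and combine them into a single \listcoloring{H} instance whose vertex cover has size roughly $k = \Oh(t^{1/\dd(H)})$, so that a kernel of size $\Oh(k^{\dd(H)-\eps})$ would compress $t$ instances into $o(t)$ bits, contradicting the standard OR-distillation lower bound and hence implying \containment.

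Concretely, the steps I would carry out are: (1) fix the CSP source problem and recall the known weak-composition lower bound for it, parameterized so that composing $t = \Theta(k^{\dd(H)})$ instances yields the desired bound (this is where the exponent $\dd(H)$ must appear on the ``combinatorial dimension'' of the encoding); (2) build a constant-size ``selection gadget'' in $(G,\cL)$ on the vertex-cover side $X$: a bounded number of vertices with carefully chosen $H$-lists so that list homomorphisms to $H$ restricted to $X$ correspond bijectively (or at least surjectively onto the relevant part) to assignments of the CSP variables; (3) realize each clause of each input instance as a vertex $v \in V(G)\setminus X$ with neighborhood in $X$ and list $\cL(v)$ chosen so that the list-homomorphism constraint imposed by $v$ is exactly the clause -- here the defining property of $\dd(H)$ guarantees such a realization exists, and guarantees that the $\Theta(k^{\dd(H)})$ clauses impose ``independent'' constraints (no $o(k^{\dd(H)})$ of them can express the rest), so a too-small kernel cannot exist; (4) bound the vertex cover of the constructed graph: $X$ together with the constant-size gadget has size $\Oh(t^{1/\dd(H)})$, and $G - X$ is edgeless because every clause-vertex is only adjacent to $X$; (5) argue correctness of the OR-composition (the combined instance is a yes-instance iff some input instance is) and invoke the framework of~\cite{DBLP:journals/toct/ChenJOPR23} or the standard machinery (\cite{Book_kernelization_FLSZ19}) to conclude the kernelization lower bound under \containment.

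The main obstacle I anticipate is step (3): showing that for an \emph{arbitrary} non-bi-arc graph $H$ one can realize the right family of ``hard'' constraints using only vertices with $H$-lists and a bounded gadget, and that the resulting constraint family has combinatorial dimension exactly $\dd(H)$ rather than something smaller. For cliques this is the clean statement that a vertex with list $S$ and neighborhood forbidding $S$ encodes a disjunction, and the counting argument is the polynomial-span argument of Jansen--Pieterse; for general $H$ with loops, bipartite components, etc., one has to extract the correct local structure from the failure of the bi-arc property and show it is rich enough -- essentially a structural lemma characterizing $\dd(H)$ in terms of realizable constraints. I would expect the paper to have isolated this as the definition of $\dd(H)$ in \cref{sec:prel} precisely so that the lower bound reduction becomes a mechanical (if technical) translation, with the genuine content living in that definition and in the matching upper bound \cref{thm:marking}. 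A secondary technical point is ensuring the reduction is polynomial-time and that the gadget's lists do not accidentally create extra homomorphisms collapsing distinct CSP assignments; this is handled by a ``consistency'' sub-gadget but needs care when $H$ is disconnected or has loops.
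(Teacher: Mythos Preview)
Your proposal conflates two distinct lower-bound techniques and then commits to the more laborious one. You frame the argument as an OR-cross-composition of $t$ instances into one with vertex cover $\Oh(t^{1/\dd(H)})$, but the paper instead gives a plain \emph{linear-parameter transformation} from a \emph{single} instance of \sat{d} on $n$ variables (where $d=\dd(H)$) to an instance of \listcoloring{H} with a vertex cover of size $\Oh(n)$, and then simply invokes the Dell--van~Melkebeek bound that \sat{d} has no kernel of size $\Oh(n^{d-\eps})$. No instance selector, no OR-semantics, no $t^{1/\dd(H)}$ arithmetic is needed; your step~(5) and the sizing in step~(4) are therefore off --- the vertex cover is just $\Oh(n)$, one constant-size variable gadget per SAT variable, and the clause vertices form the independent set.

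Where your intuition is right is steps~(2) and~(3). The paper does precisely what you anticipate: the lower bound structure $(L,(x_1,\dots,x_d),(x_1',\dots,x_d'))$ furnished by the definition of $\dd(H)$ is exactly what lets a clause vertex with list $L$ and $d$ neighbours encode a $d$-ary disjunction --- mapping all neighbours to $x_i$ leaves no extension, while flipping any one to $x_i'$ restores a common neighbour in $L$. For the variable side, the paper builds two small explicit gadgets (an inequality gadget $\NEQ(x_i,x_i')$ and a compatibility gadget $\C(x_i,x_i',x_j,x_j')$, each a path of about ten vertices with two-element lists) and chains them so that each variable gadget admits exactly two list homomorphisms, encoding true/false on $2d$ port vertices. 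This directly addresses your ``consistency sub-gadget'' worry about accidental extra homomorphisms.

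One point you do not cover: those gadget constructions require $d\geq 3$ (three indices are used in the paths). The paper handles $\dd(H)\leq 2$ separately by observing that the general $\Oh(n^{2-\eps})$ lower bound of \cref{thm:kernel-n} already gives the result, since the vertex cover number is at most $n$.
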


Thus, for graphs $H$ where $\cc(H)=\dd(H)$, the obtained bounds are tight. For graphs where $\dd(H) < \cc(H)$,  we conjecture that in fact, the lower bound is correct. 

\begin{conjecture*} \label{conjecture}
  For every graph $H$, \listcoloring{H} has a kernel with
  $\Oh(k^{\dd(H)})$ vertices, where $k$ is the size of a vertex cover of   the input graph $G$.
\end{conjecture*}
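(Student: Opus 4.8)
The plan is to attack the conjecture via the \emph{polynomial method} — the route of \cite{JansenP19Coloring} behind \cref{thm:kernel-delta} — rather than the marking scheme of \cref{thm:marking}, but pushing the degree of the encoding down from $\cc(H)$ to $\dd(H)$. Recall that an instance $(G,\cL)$ together with a vertex cover $X$ is read as a CSP over domain $V(H)$ whose variables are the vertices of $X$ and whose constraints are the vertices of $V(G)\setminus X$: a vertex $v$ with list $\cL(v)$ and neighbourhood $N(v)\subseteq X$ is satisfied by a colouring $\psi\colon X\to V(H)$ precisely when some $c\in\cL(v)$ has $\psi(u)\in N_H(c)$ for all $u\in N(v)$. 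I would fix a finite field $\mathbb F$ and, for every possible constraint $(N,L)$, build a bounded collection of polynomials over the $\Oh(k)$ indicator variables of the colouring of $X$, each of degree at most $\dd(H)$, whose common zero set is exactly the set of colourings of $X$ satisfying that constraint. Given such an encoding one keeps a basis of the linear span of all polynomials appearing in the instance; since there are only $\Oh(k^{\dd(H)})$ monomials of degree at most $\dd(H)$ in $\Oh(k)$ variables, this basis has size $\Oh(k^{\dd(H)})$, and — using that, as discussed above, the kernelization of \listcoloring{H} under this parameter is equivalent to that of the associated CSP, so the retained constraints can be realised back by gadgets — one obtains a kernel with $\Oh(k^{\dd(H)})$ vertices and edges.

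The combinatorial core is to control the gap between $\cc(H)$ and $\dd(H)$. By \cref{lem:c-and-d} it is at most one, so when $\cc(H)=\dd(H)$ there is nothing to prove; assume $\cc(H)=\dd(H)+1$. In this case we already obtain a kernel with $\Oh(k^{\cc(H)-\eps})$ vertices for some $\eps\ge 2^{1-\cc(H)}$, so the machinery is not far off — the point is to push all the way to exponent $\dd(H)$. I would first isolate the extremal configuration in $H$ responsible for $\cc(H)>\dd(H)$ — a \emph{critical obstruction} on $\dd(H)+1$ colours that is not witnessed by any $\dd(H)$ of its coordinates — and then argue that, although one constraint in isolation may genuinely require a degree-$\cc(H)$ polynomial to be described pointwise, the span of \emph{all} constraint polynomials occurring in a single instance collapses to effective degree $\dd(H)$. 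Concretely I would hunt for explicit linear dependencies among the top-degree homogeneous parts of the constraint polynomials, indexed by the critical obstruction, that let one trade a degree-$\cc(H)$ generator for lower-degree combinations once enough constraints co-occur; this is the mechanism already behind the cases settled at $\dd(H)$ (cliques, graphs with $\Delta(H)\le\cc(H)$, and powers of cycles). In parallel I would try the alternative of replacing the multilinear encoding by symmetric-function substitutions, as for \listcoloring{q}, contracting each variable block to a few symmetrised variables so that the critical obstruction becomes degree-$\dd(H)$ expressible.

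The step I expect to be the main obstacle is exactly this rank bound: proving, for \emph{every} non-bi-arc graph $H$, that the degree-$\cc(H)$ part of the constraint-polynomial space is spanned by polynomials that are really of degree $\dd(H)$. Since a single constraint can be stubborn, the argument cannot be local to one vertex of $V(G)\setminus X$; it must exploit that many constraints share the set $X$ and interact through it, which in turn seems to demand a structural classification of $H$ fine enough to name the obstruction yet uniform across all graphs. I would therefore expect a case analysis driven by the structure theory underlying $\cc$ and $\dd$ (the theory behind \cref{lem:c-and-d}), decomposing $H$ into a few indecomposable pieces on which the dependencies can be written down by hand, and then checking that combining the pieces does not inflate the degree — together with the routine verifications that the retained constraints are realisable by actual vertices (where loops on $H$ must be handled with care) and that the reduction runs in polynomial time.
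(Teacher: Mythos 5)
The statement you are trying to prove is stated in the paper as a \emph{conjecture}, not a theorem: the authors do not prove it, they only supply evidence for it. Your proposal is accordingly not a proof but a research plan, and you yourself flag the crux — the ``rank bound'' that every constraint arising from a vertex $v\in V(G)\setminus X$ can be captured (perhaps only in aggregate) by polynomials of degree $\dd(H)$ — as the step you expect to fail. That is precisely the step the paper leaves open. The paper establishes the conjecture for powers of cycles (\cref{lem:powers-poly}) and for graphs with $\cc(H)\le\Delta(H)$ (\cref{lem:bd-poly}, which in turn rests on the structural \cref{lemma:max-deg-property}), and settles for the weaker $\Oh(k^{\cc(H)-\eps})$ bound of \cref{thm:lhom-eps} in general; none of this closes the gap you identify.

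Two technical caveats about your plan. First, the ``global span collapse'' idea — that although a single constraint might need degree $\cc(H)$ in isolation, the span of the constraint polynomials in an instance has effective degree $\dd(H)$ — does not mesh with the kernelization machinery you invoke: \cref{thm:kernelization-by-polynomials} and the basis-extraction step of \cref{thm:d-poly-root-csp} require a \emph{per-constraint} low-degree encoding, because each retained equality must be mapped back to an actual vertex of $G$ (with its actual neighbourhood and list), not to a formal linear combination of constraints. Second, the paper explicitly warns that the Boolean result of Chen, Jansen and Pieterse (every $r$-ary non-clause relation admits a degree-$(r-1)$ encoding) is known to fail over non-Boolean domains, so one cannot rule out that the ``Booleanization'' of the arity-$\cc(H)$ relation $R_{L,\cc(H)}$ is an $r$-clause even when $\dd(H)<\cc(H)$ — i.e., the polynomial method over GF$(2)$ might simply be unable to prove the conjecture for some $H$. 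Your proposal would need to confront both of these points, and as written it does not; it correctly locates the open problem but does not solve it.
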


We provide some evidence in support of this. As informal evidence, for
every graph $H$ we have investigated via computer search, we found
that if $\dd(H)<\cc(H)$ then the problem allows an encoding as above
into polynomials of degree $\dd(H)$ (even over GF$(2)$).
More formally, we show that this holds for two classes of graphs:
powers of cycles (\cref{sec:powers-of-cycles}), and graphs where $\cc(H)$
is at least the max-degree of $H$ (\cref{sec:bounded-degree}).

Let us elaborate more on the latter class and recall that we can assume that $\dd(H) = \cc(H)-1$.
We observe that $\cc(H) \leq \Delta(H)+1$ (see \cref{lem:delta-c-plus1}).
If $\cc(H) = \Delta(H)+1$, i.e., $\dd(H) = \Delta(H)$, then the tight upper bound follows already from \cref{thm:kernel-delta}.
We also show that if $\cc(H)=\Delta(H)$, then $\dd(H)$ is the optimal exponent in the problem. 
Summarizing, we obtain the following result.

\begin{restatable}{theorem}{thmbddeg}
Let $H$ be a non-bi-arc graph such that $c^*(H) \geq \Delta(H)$. 
Then \listcoloring{H} parameterized by the size $k$ of the minimum vertex cover of the input graph admits a kernel with $\Oh(k^{d^*(H)})$ vertices and edges, but does not admit a kernel of size $\Oh(k^{d^*(H)-\varepsilon})$ vertices, for any $\varepsilon > 0$, unless \containment.
\end{restatable}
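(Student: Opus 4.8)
The plan is as follows. The nonexistence claim in the statement — no kernel of size $\Oh(k^{\dd(H)-\eps})$ — is immediate: $H$ is a non-bi-arc graph, so \cref{thm:lower-bound} applies verbatim and already rules this out for every $\eps>0$ unless \containment. Hence the whole content lies in the upper bound, which I would reduce to a single case and then handle with the polynomial method.

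First I would prune the case analysis. By \cref{lem:delta-c-plus1} we have $\cc(H)\le\Delta(H)+1$, which together with the hypothesis $\cc(H)\ge\Delta(H)$ forces $\cc(H)\in\{\Delta(H),\Delta(H)+1\}$; by \cref{lem:c-and-d} we also have $\dd(H)\in\{\cc(H)-1,\cc(H)\}$. If $\dd(H)=\cc(H)$, then \cref{thm:marking} already produces a kernel with $\Oh(k^{\cc(H)})=\Oh(k^{\dd(H)})$ vertices and edges. If $\dd(H)=\cc(H)-1$ and $\cc(H)=\Delta(H)+1$ — so that $\dd(H)=\Delta(H)$ — then \cref{thm:kernel-delta} produces a kernel with $\Oh(k^{\Delta(H)})=\Oh(k^{\dd(H)})$ vertices and edges. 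The only remaining possibility is $\cc(H)=\Delta(H)$ with $\dd(H)=\Delta(H)-1$; here \cref{thm:marking} and \cref{thm:kernel-delta} both yield only $\Oh(k^{\Delta(H)})$ vertices, one factor of $k$ too many, so this case needs a new argument.

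For that case I would run the polynomial-method kernelization recalled in the introduction. Fix a vertex cover $X$ of $G$ and view $(G,\cL)$ as a CSP on the variable set $X$ over the domain $V(H)$: introduce $0/1$ variables $x_{u,a}$ for $u\in X$ and $a\in V(H)$ (with $x_{u,a}=1$ meaning $\varphi(u)=a$ in a list-respecting coloring $\varphi$ of $X$), and for each $v\in V(G)\setminus X$ one constraint expressing that $v$ is colorable, i.e.\ that there is $c\in\cL(v)$ with $\varphi(u)\in N_H(c)$ for every $u\in N(v)$. The heart of the proof is to attach to each such constraint a polynomial $p_v$ of degree at most $\dd(H)=\Delta(H)-1$ in the variables $x_{u,a}$ that vanishes exactly on the list-respecting colorings of $X$ which extend to $v$. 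To keep the degree down to $\Delta(H)-1$ rather than $\Delta(H)$, I would use the hypothesis $\cc(H)=\Delta(H)$, first distilling it into a combinatorial statement about $H$ that identifies which local configurations of a vertex $v$ — for each color $c\in\cL(v)$ a neighbor $u\in N(v)$ with $\varphi(u)\notin N_H(c)$ — would force the naive product encoding to have degree $\Delta(H)$, and shows that every such configuration can be rewritten with one fewer factor, uniformly over $v$, over $\cL(v)$, and over $N(v)$. Once such an encoding is in hand, all the polynomials $p_v$ lie in the space of polynomials of degree at most $\Delta(H)-1$ in $|V(H)|\cdot k$ variables, which has dimension $\Oh_H(k^{\Delta(H)-1})$; the kernel then consists of $X$, the edges inside $X$, and one vertex of $V(G)\setminus X$ for each element of a basis of $\operatorname{span}\{p_v\}$, giving $\Oh(k^{\Delta(H)-1})=\Oh(k^{\dd(H)})$ vertices. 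To bound the number of edges by $\Oh(k^{\dd(H)})$ as well, I would finish with the degree-reduction step from the proofs of \cref{thm:kernel-delta} and \cref{thm:marking}, which replaces each kept constraint by an equivalent one touching only $\Oh_H(1)$ of the variables.

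I expect the crux — and the only real difficulty — to be the construction of the degree-$(\Delta(H)-1)$ polynomials: one must turn the equality $\cc(H)=\Delta(H)$ into an explicit, $v$-independent way of saving one coordinate in the product encoding of every constraint simultaneously, so that all the $p_v$ share a common ambient space of dimension $\Oh_H(k^{\Delta(H)-1})$. The remaining ingredients — the case split, the dimension count, and the edge-count reduction — are either routine or importable essentially verbatim from \cref{thm:kernel-delta}, \cref{thm:marking}, and the general machinery recalled in the introduction.
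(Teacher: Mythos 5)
Your case split is exactly the paper's: the lower bound is \cref{thm:lower-bound}; $\dd(H)=\cc(H)$ is \cref{thm:marking}; $\dd(H)=\cc(H)-1$ with $\cc(H)=\Delta(H)+1$ is \cref{thm:kernel-delta}; and the only remaining case is $\cc(H)=\Delta(H)$, $\dd(H)=\Delta(H)-1$, where a new polynomial-method argument is needed. You then wire that case into the general machinery (\cref{thm:kernelization-by-polynomials} in the paper, which you are re-deriving) correctly: once every ``forbidden tuple'' can be encoded by a polynomial of degree $\dd(H)$ over GF$(2)$, the dimension count and the edge-count cleanup give the kernel.

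The gap is the encoding itself, which is the actual mathematical content. You describe it as ``rewriting the naive product encoding with one fewer factor, uniformly over $v$, $\cL(v)$, $N(v)$,'' but this does not work literally — dropping a factor from $\prod_i y_{v_i,s_i}$ no longer distinguishes $S_0$ from its subsets — and it also misses what the hypothesis $\cc(H)=\Delta(H)$ actually buys you. The paper's argument has two non-obvious steps that your sketch does not supply. First, a structural lemma (\cref{lemma:max-deg-property}): if $\dd(H)+1=\cc(H)=\Delta(H)$, then every maximal neighborhood $S=N(v)$ with $|S|=\Delta(H)$ has a ``private'' $(\Delta(H)-1)$-subset $S'\in\delta S$, i.e.\ one not contained in any other set in $\mathcal{S}=\{T:|T|=\cc(H),\ T \text{ has a common neighbor}\}$; proving this is itself a short argument that builds a lower bound structure of order $\cc(H)$ from any counterexample, contradicting $\dd(H)<\cc(H)$. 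Second (\cref{lem:bd-poly}), the forbidding polynomial is not a modified product: it is an ansatz $p(\mathbf{y})=\sum_{S'\in\binom{V(H)}{\dd(H)}}\mathcal{X}[S']\,p_{S'}(\mathbf{y})$ where the $p_{S'}$ are the degree-$\dd(H)$ ``exact-use'' indicators from \cref{lemma:poly-local} and $\mathcal{X}$ is an unknown GF$(2)$ vector. One then has to solve the linear system $\sum_{S'\in\delta S}\mathcal{X}[S']\equiv 0$ for all $S\in\mathcal{S}$ and $\sum_{S'\in\delta S_0}\mathcal{X}[S']\equiv 1$, and the consistency proof is again by contradiction: a nontrivial linear dependency among the rows forces $\mathcal{S'}$ to consist of $S_0$ together with sets $S_i=S_0\setminus\{s_i\}\cup\{z\}$ for a single fixed $z$, which yields a lower bound structure of order $r=\cc(H)>\dd(H)$. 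Both the private-subset lemma and the linear-algebraic consistency argument are essential, and neither is visible in the ``drop a factor'' picture; so while you have correctly localized where the work is, the proposal as written does not contain a proof of the remaining case.
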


In addition, we show that for any graph $H$ with $\dd(H)<\cc(H)$,
there is small but positive value $\varepsilon=2^{1-\cc(H)}$ such that
\listcoloring{H} has a kernel with $\Oh(k^{\cc(H)-\varepsilon})$ vertices and edges. 

\begin{restatable}{theorem}{thmlhomeps}
\label{thm:lhom-eps}
Let $H$ be a non-bi-arc graph. Then for $\eps=2^{1-c^*(H)}$, the \listcoloring{H} problem parameterized by the size $k$ of a minimum vertex cover admits a kernel with $\Oh(k^{d^*(H)+1-\eps})$ vertices and edges.
\end{restatable}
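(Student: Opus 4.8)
\medskip
\noindent\emph{Proof plan.} If $\cc(H)=\dd(H)$ the claim follows from \cref{thm:marking}, since then $\dd(H)+1-\eps>\cc(H)$. So, using \cref{lem:c-and-d}, assume $\cc(H)=\dd(H)+1=:c$; the goal is a polynomial-time algorithm that turns an instance $(G,\cL)$ with a vertex cover $X$, $|X|=k$, into an equivalent one with $\Oh(k^{c-\eps})$ vertices and edges, where $\eps=2^{1-c}$. Following the proof of \cref{thm:marking}, regard $I:=V(G)\setminus X$ as a family of constraints on the colourings $\psi\colon X\to V(H)$, the constraint of $v\in I$ depending only on $(N(v),\cL(v))$, and apply the preprocessing behind that theorem so that each constraint is concentrated on a set $S_v\subseteq X$ with $|S_v|\le c$, where it acts as a relation $R_v$ of arity $|S_v|$, and is a plain list restriction off $S_v$. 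Keeping one $v\in I$ per distinct signature $(S_v,R_v)$ is exactly the $\Oh(k^{c})$-vertex kernel of \cref{thm:marking}; since there are only $\Oh(k^{c-1})$ signatures of arity at most $c-1$ and $c-1<c-\eps$, it suffices to retain only $\Oh(k^{c-\eps})$ of the arity-$c$ signatures.

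To that end we apply the marking recursively along a balanced bipartition of the vertex cover. Split $X=X_0\sqcup X_1$ with $|X_0|,|X_1|\le\lceil k/2\rceil$ (for a deterministic kernel, take the best member of a splitter family of $2^{\Oh(c)}\log k$ balanced bipartitions). If the core $S_v$ of an arity-$c$ signature lies entirely in $X_0$ or entirely in $X_1$, it is an arity-$c$ signature of the induced sub-instance on that side, whose vertex cover has size $\le\lceil k/2\rceil$, and is dealt with by recursion --- costing, per level, only the factor $2$ from having two sides. If instead $S_v$ straddles the bipartition, then $1\le|S_v\cap X_0|\le c-1$, so $R_v$ is governed by its traces onto the two sides, each of arity at most $c-1$. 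The heart of the argument is to show that, because $\dd(H)=\cc(H)-1$ and because \listcoloring{H} constraints have the special ``union of at most $|V(H)|$ boxes'' shape, the straddling signatures retained over the whole recursion number only $\Oh(k^{c-\eps})$ in total, rather than the naive $\Theta(k^{c})$. The exponent $c-\eps$ is exactly what comes out: a uniformly random balanced bipartition leaves a fixed $c$-element set monochromatic with probability $2^{1-c}=\eps$, so the recursion $N(k)\le 2\,N(\lceil k/2\rceil)+(\text{straddling contribution of order }k^{c-\eps})$ closes with solution $\Oh(k^{c-\eps})$.

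The hard part is precisely the highlighted bound on the straddling signatures; this is also where we fall short of the conjecture. For graphs with $\cc(H)\ge\Delta(H)$ (\cref{sec:bounded-degree}) and for powers of cycles (\cref{sec:powers-of-cycles}) one can list the irreducible arity-$\cc(H)$ relations of $H$ explicitly and check that, once a core is split, the relation is implied by lower-arity constraints of the instance --- which is how the conjectural $\Oh(k^{\dd(H)})$ bound is obtained there. Lacking such a description for an arbitrary non-bi-arc $H$, we only exploit the generic probability $2^{1-\cc(H)}$ that a random balanced split fails to separate a given core, which is what yields $\Oh(k^{\cc(H)-2^{1-\cc(H)}})$ and no better. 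It then remains to verify the routine points: the recursion has depth $\Oh(\log k)$, each level runs the marking of \cref{thm:marking} on the two sub-instances plus the accounting for straddling constraints in polynomial time, and the retained lower-arity traces are re-encoded by fresh bounded-degree constraint vertices, so the output is again an instance of \listcoloring{H} of the claimed size.
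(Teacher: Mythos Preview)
Your proposal has a genuine gap at exactly the point you yourself flag as ``the heart of the argument.'' You set up the recursion
\[
  N(k)\le 2\,N(\lceil k/2\rceil)+(\text{straddling contribution}),
\]
and assert that the straddling contribution is $\Oh(k^{c-\eps})$, but the justification you give does not support this. The observation that a random balanced bipartition leaves a fixed $c$-set monochromatic with probability $2^{1-c}=\eps$ tells you that a $(1-\eps)$-fraction of the $\Theta(k^c)$ cores \emph{straddle}, which is still $\Theta(k^c)$, not $\Theta(k^{c-\eps})$. Your remark that a straddling relation ``is governed by its traces onto the two sides, each of arity at most $c-1$'' is simply false for general relations: an $r$-ary relation is not determined by its projections onto a bipartition of its coordinates. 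Crucially, you name-drop the hypothesis $\dd(H)=\cc(H)-1$ but never actually use it anywhere in the argument; nothing in your recursion distinguishes the case $\dd(H)=\cc(H)-1$ from the case $\dd(H)=\cc(H)$, yet in the latter case the $\Oh(k^{\cc(H)})$ bound of \cref{thm:marking} is tight by \cref{thm:lower-bound}, so no such recursion can work uniformly.

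The paper's proof takes a completely different route and is much shorter. It translates the instance into a CSP over a finite language $\Gamma$ whose $\cc(H)$-ary relations are exactly the $R_{\sL,\cc(H)}$, and then observes that the hypothesis of Carbonnel's theorem~\cite{DBLP:conf/cp/Carbonnel22} (\cref{thm:csp-epsilon} in the paper) is \emph{precisely} the statement that $H$ admits no lower bound structure of order $\cc(H)$: if some $R_{\sL,\cc(H)}$ violated Carbonnel's condition, the witnessing tuples $(c_1,\ldots,c_r)$ and $(c_1',\ldots,c_r')$ together with $L$ would form a lower bound structure of order $\cc(H)$, contradicting $\dd(H)=\cc(H)-1$. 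Carbonnel's black box then gives an equivalent CSP instance with $\Oh(k^{\cc(H)-\eps})$ constraints, which is converted back to an \listcoloring{H} instance of the required size. So the missing idea in your attempt is exactly this link between the absence of an order-$\cc(H)$ lower bound structure and Carbonnel's combinatorial hypothesis; once that link is made, the $\eps=2^{1-\cc(H)}$ improvement comes for free from~\cite{DBLP:conf/cp/Carbonnel22} and no bespoke recursion is needed.
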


Thus the simple bound $\Oh(k^{\cc(H)})$ is never tight unless $\cc(H)=\dd(H)$. 
This uses a connection to CSPs, which we survey next.

We note that if the degree bound of the smallest possible kernel were guaranteed to be an integer, then 
the conjecture would follow. However, for CSPs, problems with tight non-integral
kernelization degrees do exist~\cite{BrakensiekG25stoc} (and in fact,
exist for every rational power $p/q \geq 1$~\cite{Bartexponents}).
Thus, the question is whether this occurs also for the restricted class of CSPs arising from \listcoloring{H}.

\paragraph{The CSP connection}

One reason for studying \textsc{List $H$-Coloring} parameterized by the
vertex cover number is that it serves as a study case for
sparsification properties of CSPs parameterized by the number of variables.
Let us review the definitions. 
A \emph{constraint language} is a finite set of relations over some finite
domain $D$. A \emph{constraint} over $\Gamma$ is a tuple $(\mathbf{x}, R)$,
where $R \in \Gamma$ is a relation of some arity $r$ and $\mathbf{x}=(x_1,\ldots,x_r)$
is a tuple of variables, called the \emph{scope} of the constraint.
This is also written simply as $R(x_1,\ldots,x_r)$.
The constraint is \emph{satisfied} by an assignment $\varphi$ to the variables
if $(\varphi(x_1), \ldots, \varphi(x_r)) \in R$. 
For a fixed constraint language $\Gamma$, the problem $\CSP{\Gamma}$
is the constraint satisfaction problem over $\Gamma$, where the input
is a set of variables $X$ and a set $\cC$ of constraints over $\Gamma$, where
the scope of every constraint uses variables from $X$. The question is
whether there exists an assignment $\varphi \colon X \to D$ that
satisfies all constraints in $\cC$. The CSP framework is often used
as a setting for complexity characterizations; most famously, the
\emph{dichotomy theorem}, proven in 2017 after twenty years of study,
independently Bulatov~\cite{Bulatov17CSP} and Zhuk~\cite{Zhuk17CSP,Zhuk20CSP},
says that for every language $\Gamma$, $\CSP{\Gamma}$ is either in \textsf{P} or \textsf{NP}-complete.

The \textsc{$H$-Coloring} and \textsc{$H$-List Coloring} problems can
be seen as special cases of this framework.
Indeed, for a fixed graph $H$, we can interpret the edge set of $H$ as
a binary relation 
\[
  R_H = \{(u,v) \in V(H)^2 \mid uv \in E(H)\},
\]
in which case \textsc{$H$-Coloring} is equivalent to the CSP with language $\{R_H\}$,
where we simply enforce a constraint $R_H(u,v)$ for every edge $uv \in E(G)$,
and \textsc{List $H$-Coloring} is the CSP with the language that
additionally contains all unary constraints $L \subseteq 2^{V(H)})$.
From this perspective, graph homomorphism problems can serve as a
``trial ground'' where complex questions about the complexity of CSPs 
can be studied in a more well-behaved setting. For example, the aforementioned
complexity dichotomies of \textsc{$H$-Coloring}~\cite{HellN90hcol} and \textsc{List $H$-Coloring}~\cite{DBLP:journals/jgt/FederHH03}
by far precede the proofs of the CSP dichotomy theorem~\cite{Bulatov17CSP,Zhuk17CSP,Zhuk20CSP};
and the vertex/edge deletion variants of \listcoloring{H}, studied by
Chitnis et al.~\cite{ChitnisEM17vdel}, were an important milestone in
the study of FPT algorithms for CSP optimization problems such as
\textsc{MinCSP}~\cite{DBLP:conf/esa/OsipovPW24,DBLP:conf/soda/0002KPW23}.

The limits of kernelization of CSPs parameterized by the number $n$ of variables have seen
significant attention, and the polynomial method has been one of the
main tools employed~\cite{LagerkvistW20toct,DBLP:journals/algorithmica/ChenJP20,DBLP:conf/cp/Carbonnel22}. 
In fact, the first fine-grained lower bound on
kernel sizes was for \textsc{$q$-SAT}~\cite{DBLP:journals/jacm/DellM14},
and among the first applications of the polynomial method were a
non-trivial bound for \textsc{$q$-NAE-SAT}~\cite{DBLP:journals/toct/JansenP19},
which was the foundation for \cref{thm:kernel-delta}~\cite{JansenP19Coloring}.

More recently, two related notions have been studied --
\emph{non-redundancy} of CSPs, which is akin to a non-constructive kernelizability notion, 
and \emph{sparsification}, which is a stronger notion that preserves
not only the solution space of the CSP but also the approximate
portion of satisfied constraints~\cite{KhannaPS24,KhannaPS25} -- which, remarkably, have been shown
to coincide up to a polylogarithmic factor~\cite{BrakensiekG25stoc}.
Still, some very basic questions remain unanswered -- for example,
which Boolean languages admit kernelization to $\Oh(n)$ constraints, 
or have near-linear redundancy? 

Studying \listcoloring{H} via the CSP over the relation $R_H$ above is
not very informative, since \cref{thm:kernel-n} says that the
trivial kernel is optimal, but under the vertex cover parameter there
is another, richer connection to CSPs. Let $(G,\cL)$ be an instance of
\listcoloring{H} and $X$ a vertex cover of $G$. Then we can view $X$
as the variable set, and vertices $v \in V(G) \setminus X$ as
\emph{constraints} on the coloring used on $X$.
More precisely, let $\varphi \colon X \to V(H)$ be a coloring
of the vertex cover which is a list homomorphism from $(G[X],\cL)$ to $H$. 
Then $\varphi$ can be extended to a list homomorphism $(G,\cL) \to H$
if and only if, for every vertex $v \in V(G) \setminus X$, 
\[
  \exists a \in L(v) \; \forall x \in N_G(v) \colon a \in N_H(\varphi(x)),
\]
where $N_G(v)$ denotes the set of neighbors of $v$ in the graph $G$.
Since this depends only on the set of colors used for $N_G(v) \subseteq X$
in $\varphi$, we can ``project'' this to a constraint directly on $X$,
specifically a constraint
\[
  R_{L,r}(x_1,\ldots,x_r) \text{ where } R_{L,r} = \{(x_1,\ldots,x_r) \in V(H)^r \mid \exists
  a \in L \; \forall i \in [r] \ : \ a \in N_H(x_i)\}
\]
where $L=\cL(v)$ and $N_G(v)=\{x_1,\ldots,x_r\}$.
In this setting, the parameters $\cc(H)$ and $\dd(H)$ have natural CSP interpretations.
Namely, $\cc(H)$ is the \emph{decomposability} of $R_L$ -- that is,
for every value of $r=\deg_G(v)$ and every list $L$, the relation $R_{L,r}$ is equivalent to a
conjunction of constraints of arity $\cc(H)$. Thus, \listcoloring{H}
is equivalent to a CSP with $k$ variables over a language $\Gamma_H$
containing relations of arity up to $\cc(H)$.
The parameter $\dd(H)$ corresponds to a more technical, but established
notion: $q=\dd(H)+1$ is the smallest $q$ such that $\Gamma_H$ is preserved
by the so-called \emph{$q$-universal partial polymorphism} (see Lagerkvist
and Wahlström~\cite{LagerkvistW20toct}).
Within this framework, the $\varepsilon$-improved bound follows from the work of 
Carbonnel~\cite{DBLP:conf/cp/Carbonnel22}.

More strongly, for Boolean CSPs Chen, Jansen and Pieterse~\cite{DBLP:journals/algorithmica/ChenJP20}
showed that every $r$-ary relation except $r$-clauses (i.e., $r$-ary relations with only one excluded tuple)
can be captured by a set of polynomials of degree at most $r-1$.
Unfortunately, this is not true for general CSPs, and we cannot
exclude that the ``Booleanization'' of $R_{L,\cc(H)}$ yields an $r$-clause
even if $\dd(H) < \cc(H)$ for some graph $H$.

\section{Notation and preliminaries}
\label{sec:prel}

For a positive integer $n$, by $[n]$ we denote the set $\{1,\ldots,n\}$.
For a set $S$ and an integer $k$, by $\binom{S}{k}$ we denote the family of all $k$-element subsets of $S$.
The \emph{shadow} of a set $S$ to is
\[
  \delta S = \{S' \subset S \mid |S'|=|S|-1\}.
\]

\paragraph{Graphs.} 
Let $G$ be a graph with possible loops on vertices.
For a vertex $v\in V(G)$, by $N_G(v)$ we denote the set of neighbors of $v$ in $G$.
Note that $v \in N_G(v)$ if and only if $v$ has a loop.
The \emph{degree} of $v$ is $|N_G(v)|$ and is denoted by $\deg_G(v)$. Note that in this convention a loop contributes 1 to the degree of the vertex.
If $G$ is clear from the contex, then we will simply write $N(v)$ instead of $N_G(v)$, and $\deg(v)$ instead of $\deg_G(v)$.
We say that two vertices $u,v\in V(G)$ are \emph{incomparable} if $N(u)\not\subseteq N(v)$ and $N(v)\not\subseteq N(u)$. A set $L\subseteq V(G)$ is \emph{incomparable} if all its vertices are pairwise incomparable.

\paragraph{Homomorphisms.} 
Fix a graph $H$ and consider an instance $(G,\cL)$ of \listcoloring{H}. Suppose there is a vertex $v\in V(G)$ and two distinct vertices $x,y\in \cL(v)$ such that $N_H(x)\subseteq N_H(y)$, then the instance $(G,\cL')$ obtained by removing $x$ from $\cL(v)$ is equivalent to the instance $(G,\cL)$.
Indeed, if there is a list homomorphism $\vphi: (G,\cL)\to H$ with $\vphi(v)=x$, then we can also set $\vphi(v)=y$, as all neighbors of $x$ are also the neighbors of $y$.
Therefore, applying the above procedure exhaustively to an instance $(G,\cL)$, we can obtain in polynomial time an equivalent instance $(G,\cL')$ with all lists being incomparable sets.
We will call such an instance $(G,\cL')$ \emph{reduced}.

Sometimes we will abuse the notation and for an instance $(G,\cL)$ of \listcoloring{H} and for a subgraph $G'$ of $G$, we will write $(G',\cL)$ instead of $(G',\cL|_{V(G')})$.

Recall that \listcoloring{H} is polynomial-time solvable if $H$ is a bi-arc-graph, and \textsf{NP}-hard otherwise~\cite{DBLP:journals/jgt/FederHH03}.
The definition of bi-arc graphs is somewhat involved and not really relevant to our work, but let us list some properties of (non) bi-arc graphs that will be useful for us.

\begin{theorem}[\cite{DBLP:journals/jgt/FederHH03}]\label{thm:bi-arc}
Let $H$ be a simple graph.
\begin{enumerate}
\item If $H$ is non-bipartite or contains an induced cycle with at least $6$ vertices, then $H$ is non-bi-arc, and thus \listcoloring{H} is NP-hard.
\item If $H$ contains at most $2$ edges or is isomorphic to $C_4$, then $H$ is a bi-arc graph, and thus \listcoloring{H} is polynomial-time solvable.
\item If $H$ is a non-bi-arc graph, then there are vertices (not necessarily distinct) $v_1,v_2,v_3,v_4,v_5$ such that (i) $v_3$ is incomparable with $v_1$ and $v_5$, (ii) the vertices form a $v_1$-$v_5$ walk, i.e., for $i\in[4]$, we have $v_iv_{i+1}\in E(H)$, and (iii) $v_2v_5, v_1v_4\notin E(H)$.
\end{enumerate}
\end{theorem}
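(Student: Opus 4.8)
The plan is to treat the statement as a repackaging of known results: parts~(1) and~(2) will follow immediately from the dichotomies of Hell--Ne\v{s}et\v{r}il~\cite{HellN90hcol} and Feder--Hell--Huang~\cite{DBLP:journals/jgt/FederHH03}, together with the observation that the bi-arc property is exactly ``\listcoloring{H} is polynomial-time solvable'' and is therefore closed under taking induced subgraphs (restrict the lists to the smaller target), while part~(3) is the substantive one and will be obtained by extracting an explicit short structure from the obstruction to bi-arc-ness that drives the NP-hardness half of~\cite{DBLP:journals/jgt/FederHH03}.

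For part~(1): if the simple graph $H$ is non-bipartite it contains an odd cycle, so its core is not $2$-colourable and hence is neither $K_1$ nor $K_2$; by the Hell--Ne\v{s}et\v{r}il dichotomy \homo{H} is NP-hard, so a fortiori \listcoloring{H} is NP-hard (take all lists equal to $V(H)$), and by the Feder--Hell--Huang dichotomy $H$ is not a bi-arc graph. If $H$ has an induced cycle on $\ell\geq 6$ vertices, then for odd $\ell$ the graph $H$ is non-bipartite and we are done, while for even $\ell$ one uses that \listcoloring{C_\ell} is NP-hard (one of the base cases in~\cite{DBLP:journals/jgt/FederHH03}) together with hereditariness of bi-arc graphs under induced subgraphs to conclude that $H$ is not bi-arc. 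For part~(2): a simple graph with at most two edges is a disjoint union of copies of $K_1$, $K_2$ and the two-edge path $P_3$, for each of which \listcoloring is solvable in polynomial time by an elementary argument, and \listcoloring{C_4} is polynomial as well (e.g.\ $C_4=K_{2,2}$, and list homomorphism to a complete bipartite target reduces to 2-SAT); in each case the Feder--Hell--Huang dichotomy then certifies that $H$ is a bi-arc graph.

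The hard part is part~(3). Here the plan is to use the characterisation of~\cite{DBLP:journals/jgt/FederHH03} that $H$ fails to be a bi-arc graph precisely when the bipartite ``associated graph'' $H^{*}$ -- on vertex set $V(H)\times\{1,2\}$ with $(u,1)(v,2)\in E(H^{*})$ iff $uv\in E(H)$ -- admits no min ordering. Absence of a min ordering means that the natural ``forcing'' relation on ordered pairs of vertices (given two incident $H^{*}$-edges and a desired inequality between two endpoints, one is forced to order the two remaining endpoints a certain way) is cyclic; the plan is to take a shortest such forcing cycle and unfold it back into $H$. This should yield a middle vertex $v_3$ together with two length-two walks $v_3v_2v_1$ and $v_3v_4v_5$ and the non-edges $v_1v_4\notin E(H)$ and $v_2v_5\notin E(H)$ certifying the two-sided conflict, and minimality should force $v_3$ to be incomparable with both $v_1$ and $v_5$ -- otherwise, if say $N_H(v_1)\subseteq N_H(v_3)$ or $N_H(v_3)\subseteq N_H(v_1)$, one could shortcut one side of the conflict and contradict minimality. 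I expect the genuinely delicate steps to be (i) bounding the obstruction to a \emph{constant} size -- in fact a walk on exactly five vertices rather than an arbitrarily long one -- which needs a careful exchange/shortcutting argument on the forcing cycle, and (ii) realising both incomparabilities simultaneously with both prescribed non-edges. Rather than reprove this from scratch I would cite the corresponding lemma underlying the NP-hardness construction of~\cite{DBLP:journals/jgt/FederHH03} and read the configuration off there; and for the sub-case where $H$ already contains an induced $C_\ell$ with $\ell\geq 6$ one can exhibit the configuration explicitly inside that cycle (five consecutive vertices $v_1,\dots,v_5$ along an induced $C_6$ work, since all vertices of an induced cycle are pairwise incomparable and $v_1v_4$, $v_2v_5$ are non-edges there), which disposes of the part of the obstruction living on long induced cycles.
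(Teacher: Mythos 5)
The paper does not prove this theorem: it is stated purely as a cited result from Feder, Hell, and Huang~\cite{DBLP:journals/jgt/FederHH03}, prefaced by the remark that the definition of bi-arc graphs ``is somewhat involved and not really relevant to our work,'' and only the listed consequences are needed downstream. Your reconstruction of parts~(1) and~(2) is correct and routine: both follow from the two dichotomies together with the fact that bi-arc graphs are closed under induced subgraphs (restrict the lists to the subgraph), and your handling of small graphs and $C_4$ is fine. Part~(3), however, is the substantive claim — it is exactly what is used to derive $d^*(H)\geq 2$ in \cref{obs:non-bi-arc-d} — and your treatment of it is only a plan, not a proof. You correctly identify that the structure is the combinatorial core of the min-ordering / forcing-cycle argument in~\cite{DBLP:journals/jgt/FederHH03}, and you correctly flag the two genuinely delicate points: shortcutting a forcing cycle down to a constant-length walk, and realising both incomparabilities of $v_3$ with $v_1,v_5$ simultaneously with the two prescribed non-edges $v_2v_5,v_1v_4$. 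But you do not actually carry out either step, and you yourself concede you would ``cite the corresponding lemma'' rather than prove it — at which point your approach for part~(3) collapses back to the paper's approach of citing~\cite{DBLP:journals/jgt/FederHH03}, with an unfinished sketch in front of it. The explicit check you do perform (five consecutive vertices on an induced $C_6$) is correct but only disposes of one source of non-bi-arc-ness; it does not substitute for the general extraction argument, and so a genuine gap remains in part~(3).
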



\paragraph{Kernels.} Let $\Sigma$ be a finite set (alphabet).
A \emph{parameterized problem} is a subset of $\Sigma^*\times \N$.
Let $g: \N\to\N$ be a computable function.
For parameterized problems $\mathcal{P},\mathcal{P'}\subseteq \Sigma^*\times \N$, \emph{a generalized kernel of $\mathcal{P}$ into $\mathcal{P'}$} is an algorithm $\mathcal{A}$ that given an instance $(x,k)\in \Sigma^*\times \N$, outputs in time polynomial in $|x|+k$ an instance $(x',k')$ such that:
\begin{enumerate}
\item $(x,k)\in \mathcal{P}$ if and only if $(x',k')\in \mathcal{P'}$,
\item $|x'|+k'\leq g(k)$.
\end{enumerate}
We say that $\mathcal{A}$ is \emph{a kernel} for $\mathcal{P}$ if $\mathcal{P}=\mathcal{P'}$.

\section{\boldmath Definition of $\cc(H)$ and $\dd(H)$}

Let $H$ be a graph with possible loops. 
Let $L,S \subseteq V(H)$. We say that $S$ \emph{has a common neighbor in} $L$ if there is a vertex in $L$ adjacent to every vertex in $S$.
Note that it might happen that a common neighbor belongs to $S \cap L$, it is only possible if such a vertex has a loop.

Let us define two graph invariants  that will be important in our paper.

\begin{definition}[$\cc(H)$]\label{def:c}
We define $\cc(H)$ as follows:
\[
\cc(H) := \max_{L\subseteq V(H)} \max \{ |S| \ | \ S\subseteq V(H) \text{ is a minimal set that does not have a common neighbor in } L \}.
\]
\end{definition}
In other words, $\cc(H)$ is the size of a largest possible set $S$, for which there is $L$, such that:
\begin{itemize}
\item $S$ has no common neighbor in $L$,
\item every proper subset of $S$ has a common neighbor in $L$.
\end{itemize}

\begin{definition}[$\dd(H)$]\label{def:d}
Let $H$ be a graph. A \emph{\betterlb} of order $d$ in $H$ is formed by a subset $L\subseteq V(H)$, distinct vertices $(x_1,\ldots,x_d)$, and non-necessarily distinct vertices $(x_1',\ldots,x_d')$, such that $x_i$ is incomparable with $x_i'$ for every $i \in [d]$, and
\begin{itemize}
\item $\bigcap_{i \in [d]} N(x_i) \cap L= \emptyset$, meaning that $x_1,\ldots,x_d$ do not have a common neighbor in $L$, but
\item $\bigcap_{i \in [d]} N(y_i) \cap L \neq \emptyset$ where $y_i \in \{x_i,x_i'\}$ and there exists $i \in [d]$ such that $y_i = x_i'$. This means that replacing at least one $x_i$ by its primed counterpart yields a set that does have a common neighbor in $L$.
\end{itemize}
By $\dd(H)$ we will denote the largest $d$ such that $H$ admits a \betterlb of order $d$.
\end{definition}

Let us point out that in our setting, in both definitions, the set $L$ will be a list of some vertex, and thus it would be natural to assume that $L$ is an incomparable set.
However, it turns out that such an assumption does not influence the definition.

\begin{observation}
Let $H$ be a graph. The following hold.
\begin{enumerate}
\item $
\cc(H) = \max_{\substack{L\subseteq V(H), \\ L \text{ is incomparable} }} \max \{ |S| \ | \ S\subseteq V(H) \text{ is a minimal set that does not have a common neighbor in } L \},
$

\item $d^*(H)$ is the maximum $d$ such that there is an incomparable set $L\subseteq V(H)$, distinct vertices $(x_1,\ldots,x_d)$, and non-necessarily distinct vertices $(x_1',\ldots,x_d')$ that form a lower bound structure of order $d$.
\end{enumerate}
\end{observation}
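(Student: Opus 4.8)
The plan is to observe that passing from an arbitrary list $L$ to an incomparable one uses exactly the domination-based reduction already introduced above (the one producing \emph{reduced} instances), and that this reduction does not change which sets $S\subseteq V(H)$ have a common neighbor in the list. Concretely, the key step I would isolate is the following invariance: if $x,y\in L$ are distinct with $N_H(x)\subseteq N_H(y)$, then for \emph{every} set $S\subseteq V(H)$, $S$ has a common neighbor in $L$ if and only if $S$ has a common neighbor in $L\setminus\{x\}$. The backward direction is trivial since $L\setminus\{x\}\subseteq L$; for the forward direction, if $z\in L$ is adjacent to all of $S$ and $z\neq x$ we are done, while if $z=x$ then $S\subseteq N_H(x)\subseteq N_H(y)$, so $y\in L\setminus\{x\}$ is a common neighbor of $S$.

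Next I would iterate: as long as $L$ is not incomparable, pick such a pair $x,y$ and delete $x$; since $|L|$ strictly decreases, after finitely many steps we obtain an incomparable set $L'\subseteq L$, and by applying the invariance at every step we get that for all $S\subseteq V(H)$, $S$ has a common neighbor in $L$ if and only if it has one in $L'$. In particular both the property ``$S$ has no common neighbor in the list'' and the property ``$S$ is a minimal set with that property'' are preserved when replacing $L$ by $L'$.

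Item (1) then follows: if $L$ and $S$ witness $\cc(H)$, then $L'$ and the same $S$ witness it as well, so the maximum over incomparable lists is at least $\cc(H)$, and the reverse inequality is immediate since incomparable sets form a subfamily of all subsets of $V(H)$. Item (2) is analogous: given a lower bound structure $(L,(x_1,\ldots,x_d),(x_1',\ldots,x_d'))$, replace $L$ by an incomparable $L'\subseteq L$; incomparability of each pair $x_i,x_i'$ is untouched, the set $\{x_1,\ldots,x_d\}$ still has no common neighbor in $L'$, and for the relevant choice of $y_i\in\{x_i,x_i'\}$ the set $\{y_1,\ldots,y_d\}$ still has a common neighbor in $L'$, so $(L',(x_i),(x_i'))$ is a lower bound structure of order $d$. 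I do not anticipate a genuine obstacle here; the only point requiring care is that the invariance lemma must be stated for all sets $S$ simultaneously, so that the ``no common neighbor'' condition and the minimality / ``has a common neighbor'' conditions all transfer along the same sequence of deletions. Everything else is bookkeeping.
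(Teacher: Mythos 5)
Your argument is essentially the same as the paper's: you isolate the same deletion step (remove $x$ when $N_H(x)\subseteq N_H(y)$ for some other $y\in L$) and iterate until $L$ becomes incomparable, verifying that existence of a common neighbor in the list is preserved for every relevant set. The only cosmetic difference is that you state the preservation as a uniform invariance for all $S\subseteq V(H)$, whereas the paper checks it just for the minimal set $S$ and its proper subsets (for item~1) and for the sets arising in a lower bound structure (for item~2); both are correct and equally short.
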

\begin{proof}
We will prove 1. -- the proof of 2. is analogous.
Let us define
\begin{equation*}
c'(H) := \max_{\substack{L\subseteq V(H), \\ L \text{ is incomparable} }} \max \{ |S| \ | \ S\subseteq V(H) \text{ is a minimal set that does not have a common neighbor in } L \}.
\end{equation*}
We aim to show that $c'(H)=c^*(H)$.
Clearly, it holds that $c'(H)\leq c^*(H)$.
So now let us show that $c'(H)\geq c^*(H)$.
Let $L\subseteq V(H)$, and let $S\subseteq V(H)$ be a minimal set that does not have a common neighbor in $L$ such that $c^*(H)=|S|$.
By the definition, if $L$ is incomparable, then $c'(H)\geq c^*(H)$, as desired.
So suppose that $L$ is not incomparable, and let $x,y\in L$ be such that $N_H(x)\subseteq N_H(y)$.
We will show that $S$ is also a minimal set that does not have a common neighbor in $L\setminus\{x\}$.
Indeed, since $S$ does not have a common neigbor in $L$, then $S$ does not have a common neighbor in $L\setminus \{x\}$.
Furthermore, for a proper subset $S'\subseteq S$, the set $S'$ has a common neighbor in $L$.
If the common neighbor of $S'$ in $L$ is $x$, then also $y$ is a common neighbor of $S'$, and thus $S'$ has a common neigbor in $L\setminus \{x\}$.

We can now exhaustively remove every vertex $x$ such that there is $y\in L$ with $N_H(x)\subseteq N_H(y)$, and obtain an incomparable set $L'\subseteq L$ such that $S$ is minimal set that does not have a common neighbor in $L'$.
This completes the proof.
\end{proof}

Recall that \listcoloring{H} is polynomial-time solvable for bi-arc graph, and \textsf{NP}-hard otherwise.
Therefore, in our work we mainly focus on non-bi-arc graphs.
The following observation shows that in such a case $d^*$ is always at least $2$.

\begin{observation}\label{obs:non-bi-arc-d}
Let $H$ be a non-bi-arc graph. Then $d^*(H)\geq 2$.
\end{observation}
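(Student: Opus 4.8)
The plan is to directly construct a \betterlb of order $2$ out of the walk structure guaranteed by \cref{thm:bi-arc}(3). So let $H$ be a non-bi-arc graph, and take vertices $v_1,v_2,v_3,v_4,v_5$ as in that item: they form a $v_1$–$v_5$ walk (so $v_iv_{i+1}\in E(H)$ for $i\in[4]$), $v_3$ is incomparable with both $v_1$ and $v_5$, and $v_2v_5, v_1v_4\notin E(H)$. The idea is to set $d=2$, use the pair of ``primary'' vertices $(x_1,x_2)=(v_3,v_3)$ — note the definition allows the unprimed vertices to be... wait, it requires $x_1,\ldots,x_d$ distinct, so instead I would try $(x_1,x_2)=(v_1,v_5)$ with primed counterparts $(x_1',x_2')=(v_3,v_3)$, and choose the list $L=\{v_2,v_4\}$. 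First I would check incomparability: $x_1=v_1$ is incomparable with $x_1'=v_3$ and $x_2=v_5$ is incomparable with $x_2'=v_3$, both directly from condition (i). If $v_1=v_5$ the pair $(x_1,x_2)$ would not be distinct, so I would have to handle that degenerate case separately (see below).

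Next I would verify the two bullet conditions of \cref{def:d}. For the first, I need $N(v_1)\cap N(v_5)\cap L=\emptyset$, i.e.\ neither $v_2$ nor $v_4$ is a common neighbor of $v_1$ and $v_5$. Now $v_2\notin N(v_5)$ since $v_2v_5\notin E(H)$, and $v_4\notin N(v_1)$ since $v_1v_4\notin E(H)$; so indeed no vertex of $L$ is adjacent to both $v_1$ and $v_5$. For the second bullet, I replace $x_1$ by $x_1'=v_3$, keeping $y_2=x_2=v_5$: I claim $v_4\in N(v_3)\cap N(v_5)\cap L$. Indeed $v_3v_4\in E(H)$ and $v_4v_5\in E(H)$ are both edges of the walk, and $v_4\in L$; and here $y_1=v_3=x_1'\neq x_1$, so the ``there exists $i$ with $y_i=x_i'$'' requirement is met. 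This exhibits a \betterlb of order $2$, giving $d^*(H)\ge 2$.

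The main obstacle is the case analysis forced by the fact that the $v_i$ in \cref{thm:bi-arc}(3) need not be distinct, whereas \cref{def:d} insists the unprimed vertices $x_1,\dots,x_d$ be distinct. If $v_1\neq v_5$ the argument above goes through verbatim. If $v_1=v_5=:v$, then $v$ is incomparable with $v_3$ (from (i)), and $v_2v=v_2v_5\notin E(H)$ while $v_1v_4=vv_4\notin E(H)$, so $v$ has no neighbor in $L=\{v_2,v_4\}$ at all. Here I would instead build the structure using the two distinct vertices of $L$ itself, or better, reconsider which vertices play the role of $x_1,x_2$. A clean way out: observe that since $H$ is non-bi-arc it is in particular not a bi-arc graph, so by \cref{thm:bi-arc}(2) it has at least $3$ edges and is not $C_4$; combined with the walk structure one can always perturb the choice to get two distinct primary vertices. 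I would also double-check that $L$ is a legitimate list (it is incomparable after the reduction of the Observation preceding this one, or we simply do not need incomparability of $L$ by that Observation). Assembling these pieces yields $d^*(H)\ge 2$ for every non-bi-arc $H$.
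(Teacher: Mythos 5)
Your construction — $(x_1,x_2)=(v_1,v_5)$, $(x_1',x_2')=(v_3,v_3)$, $L=\{v_2,v_4\}$ — is exactly the one the paper uses, so the overall approach is correct. However there are two issues with the execution.

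First, you only verify one of the three replacement cases demanded by \cref{def:d}. The second bullet quantifies over \emph{every} choice $(y_1,y_2)$ with $y_i\in\{x_i,x_i'\}$ and at least one $y_i=x_i'$. You check $(y_1,y_2)=(v_3,v_5)$ (common neighbor $v_4$) but omit $(y_1,y_2)=(v_1,v_3)$ (common neighbor $v_2$, since $v_1v_2,v_2v_3\in E(H)$) and $(y_1,y_2)=(v_3,v_3)$ (where $N(v_3)\cap L\supseteq\{v_2,v_4\}\neq\emptyset$). These are easy, but they do need to be stated for the argument to be complete.

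Second, the worry about $v_1=v_5$ that consumes the last paragraph of your proposal is unfounded, and the resolution you sketch (``perturb the choice'') is not actually an argument. If $v_1=v_5$ then condition (ii) of \cref{thm:bi-arc}(3) gives $v_1v_2\in E(H)$, while condition (iii) gives $v_2v_5=v_2v_1\notin E(H)$, a contradiction. So $v_1\neq v_5$ always, and the distinctness of $x_1,x_2$ is automatic; no degenerate case exists and no perturbation is needed. Dropping that hand-waving paragraph and checking all three replacement cases turns your proposal into the paper's proof.
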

\begin{proof}
Let $v_1,v_2,v_3,v_4,v_5$ be as in \cref{thm:bi-arc}~(3).
Let us define $x_1:=v_1$, $x_2:=v_5$, and $x'_1=x'_2=v_3$.
Moreover, we set $L:=\{v_2,v_4\}$.
Let us verify that $x_1,x_2,x'_1,x'_2,L$ form a lower bound structure of order $2$.
First, by \cref{thm:bi-arc}~(3), we have that $x_1=v_1$ is incomparable with $x'_1=v_3$, and $x_2=v_5$ is incomparable with $x'_2=v_3$.
Furthermore, the set $\{x_1,x_2\}=\{v_1,v_5\}$ does not have a common neighbor in $L=\{v_2,v_4\}$, as, by \cref{thm:bi-arc}~(3), $v_2v_5\notin E(H)$ and $v_1v_4\notin E(H)$.
So now consider the set $\{x''_1,x''_2\}$ such that for $i\in[2]$, we have $x''_i\in \{x_i,x'_i\}$, and for at least one $i\in [2]$, we have $x''_i=x'_i$.
Then the set $\{x''_1,x''_2\}$ contains $v_3$ and at most one of $v_1,v_5$.
If $\{x''_1,x''_2\}=\{v_3\}$, then the set clearly has a common neighbor in $L$.
If $\{x''_1,x''_2\}=\{v_1,v_3\}$, then the common neighbor of $\{x''_1,x''_2\}$ in $L$ is $v_2$, and if $\{x''_1,x''_2\}=\{v_3,v_5\}$, the the common neighbor of $\{x''_1,x''_2\}$ in $L$ is $v_4$.
Therefore, $x_1,x_2,x'_1,x'_2,L$ form a lower bound structure of order $2$, which completes the proof.
\end{proof}

We observe that the parameters introduced above, i.e., $\cc(H)$ and $\dd(H)$, may differ by at most one.

\begin{lemma}\label{lem:c-and-d}
For every graph $H$, it holds that $\cc(H)-1 \leq \dd(H) \leq \cc(H)$.
\end{lemma}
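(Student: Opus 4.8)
The plan is to prove the two inequalities $\dd(H)\le \cc(H)$ and $\dd(H)\ge \cc(H)-1$ separately; the first is a one-line observation, and the second is where the actual construction lies.

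For $\dd(H)\le \cc(H)$: take a \betterlb of order $d:=\dd(H)$, given by a list $L$ and vertices $x_1,\dots,x_d$ and $x_1',\dots,x_d'$ as in \cref{def:d}. I would show that $S:=\{x_1,\dots,x_d\}$ — a set of exactly $d$ elements, since the $x_i$ are distinct — is a \emph{minimal} set with no common neighbor in $L$, which immediately yields $\cc(H)\ge d$. That $S$ has no common neighbor in $L$ is the first bullet of \cref{def:d}. For minimality, every proper subset of $S$ is contained in $S\setminus\{x_j\}$ for some $j$; applying the second bullet to the partial swap that replaces only $x_j$ by $x_j'$ produces a vertex of $L$ adjacent to all $x_i$ with $i\neq j$, hence a common neighbor of $S\setminus\{x_j\}$ (and of the subset) in $L$; this also shows $L\neq\emptyset$, covering the empty subset.

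For $\dd(H)\ge \cc(H)-1$: set $c:=\cc(H)$ and assume $c\ge 2$ (otherwise the bound is vacuous). Fix a list $L$ and a minimal set $S=\{s_1,\dots,s_c\}$ with no common neighbor in $L$. The preparatory step is the observation that $S$ is automatically an incomparable set: by minimality each $S\setminus\{s_i\}$ has a common neighbor $b_i\in L$, and $b_i\notin N(s_i)$ because $S$ has no common neighbor in $L$, so for $i\ne j$ we get $b_j\in N(s_i)\setminus N(s_j)$, giving $s_i\not\subseteq s_j$ in neighborhoods and, symmetrically, incomparability. I then build a \betterlb of order $c-1$ by taking $x_k:=s_k$ and $x_k':=s_c$ for $k\in[c-1]$ (all primed vertices equal to the single vertex $s_c$, which \cref{def:d} permits), with list $L':=\{b_1,\dots,b_{c-1}\}$. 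The pairs $x_k,x_k'$ are incomparable by the observation, and the $x_k$ are distinct; moreover $\bigcap_{k\in[c-1]}N(x_k)\cap L'=\emptyset$ since each $b_j\in L'$ misses $s_j$. The remaining point — verifying that every partial swap $(y_1,\dots,y_{c-1})$ with $y_k\in\{s_k,s_c\}$ and $R:=\{k:y_k=s_c\}\neq\emptyset$ admits a common neighbor in $L'$ — is handled by picking $j\in R$ and noting that $\{y_1,\dots,y_{c-1}\}$ contains $s_c$ but not $s_j$, hence lies inside $S\setminus\{s_j\}$, so $b_j\in L'$ works.

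The main obstacle is this second direction, and within it two things must be gotten right: (i) the observation that a minimal no-common-neighbor set is incomparable — this is what supplies the incomparable pairs the definition demands — and (ii) the verification of the "every partial swap has a common neighbor" clause, where the trick is that collapsing all primed coordinates to the single vertex $s_c$ forces every such swap to sit inside some $S\setminus\{s_j\}$, for which a witness $b_j$ has already been produced.
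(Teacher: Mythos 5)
Your proof is correct and follows essentially the same approach as the paper: for $\dd(H)\le\cc(H)$ you extract the minimal set $\{x_1,\dots,x_d\}$ from a lower bound structure, and for $\dd(H)\ge\cc(H)-1$ you build a lower bound structure of order $c-1$ by sending all primed coordinates to the single vertex $s_c$, using the witnesses $b_i$ to produce the required common neighbors. The only cosmetic difference is your choice of list: you take $L'=\{b_1,\dots,b_{c-1}\}$, whereas the paper takes $L'=L\setminus\bigcap_{i<c}N(s_i)$ (a superset of yours); both work for exactly the same reason, and your write-up is somewhat more explicit about why every partial swap lands inside some $S\setminus\{s_j\}$.
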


\begin{proof}
Let $c = \cc(H)$ and let $S=\{s_1,\ldots,s_c\}$ and $L$ be as in \cref{def:c}.
For all $i \in [c-1]$, define $x_i:=s_i$ and $x_i':=s_{c}$, and let $L':=L \setminus \bigcap_{i=1}^{c-1} N(s_i)$.
Clearly, the set $\{x_1,\ldots,x_{c-1}\}$ does not have a common neighbor in $L'$.
Since $S$ is a minimal set without a common neighbor in $L$, for every $i\in [c-1]$, the set $S\setminus \{s_i\}$ does have a common neighbor in $L$.
This common neighbor cannot be adjacent to $s_i$, and thus it cannot be in $\bigcap_{i=1}^{c-1} N(s_i)$.
Therefore, the set $S\setminus \{s_i\}$ does have a common neighbor in $L'$.
Moreover, by minimality of $S$, for every distinct $s_i,s_j\in S$ the set $N(s_i) \setminus N(s_j)$ intersects $L$ and thus in particular is nonempty. Therefore, for every $i\in[c-1]$, vertices $x_i$ and $x_i'$ are incomparable.
Summing up, the triple $L'$, $x_1,\ldots,x_{c-1}$, and $x_1',\ldots,x'_{c-1}$ is a lower bound structure of order $c-1=\cc(H)-1$ in $H$ and thus $\cc(H)-1\leq \dd(H)$.

Now let $d=\dd(H)$ and let $(L,(x_1,\ldots,x_d),(x_1',\ldots,x_d'))$ be lower bound structure of order $d$, as in \cref{def:d}.
Define $S:=\{x_1,\ldots,x_d\}$. Observe that $S$ does not have a common neighbor in $L$, but every proper subset of $S$ does.
Thus $S$ is an inclusion-wise minimal set, that does not have a common neighbor in $L$, and by the definition of $\cc$, it holds that $\dd(H)\leq \cc(H)$.
\end{proof}

By \cref{lem:c-and-d} we know that, for very graph $H$, 
we have either $c^*(H)=d^*(H)$ or $c^*(H) = d^*(H)+1$.
Interestingly, both possibilities are attainable, even for cycles.

\begin{observation}\label{obs:cycles-c-d}
Let $k\geq 5$. If $k\neq 6$, then $d^*(C_k)=c^*(C_k)=2$, and $c^*(C_6)=d^*(C_6)+1=3$.
\end{observation}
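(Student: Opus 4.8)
The plan is to compute both invariants directly for the cycle $C_k$, $k \ge 5$, by exploiting the very rigid neighborhood structure: in $C_k$ each vertex $v$ has exactly two neighbors, and for any two distinct vertices $u,v$ we have $N(u) \subseteq N(v)$ only in degenerate small cases (which do not occur for $k \ge 5$), so every pair of vertices of $C_k$ is incomparable. First I would establish the easy upper bound $c^*(C_k) \le 3$: if $S$ is a minimal set with no common neighbor in some $L$, then picking any two elements $s_1, s_2 \in S$, the set $S \setminus \{s_1,s_2\}$ has a common neighbor $w \in L$; since $\deg_{C_k}(w) = 2$, at most two further vertices can be added to the common-neighborhood constraint before it becomes unsatisfiable, so $|S| \le |S \setminus \{s_1,s_2\}| + 2$ forces $|S| \le 3$ once one notes $|S \setminus \{s_1, s_2\}| \le 1$ (any vertex $w$ of degree $2$ has at most $2$ neighbors, so a set of $\ge 2$ vertices all adjacent to a common degree-$2$ vertex must itself be contained in $N(w)$, hence has $N(w)$ itself as a common neighbor-candidate, and minimality pins things down). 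I would make this counting precise: a minimal $S$ without common neighbor in $L$ with $|S| \ge 2$ has the property that $S$ sits inside $N(w)$ for suitable $w$; in $C_k$, $|N(w)| = 2$, giving $|S| \le 3$, with $|S| = 3$ only possible when the two neighbors of some vertex, together with one more vertex, form $S$.

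Next I would handle the lower bound $c^*(C_k) \ge 2$ for all $k \ge 5$ (which also follows from \cref{obs:non-bi-arc-d} together with \cref{lem:c-and-d}, since odd cycles and $C_k$ for $k \ge 6$ are non-bi-arc by \cref{thm:bi-arc}(1); for $C_5$ one uses non-bipartiteness), and then show the $c^*(C_k) = 3$ case happens exactly for $k = 6$. For $C_6$, label vertices $0,1,\dots,5$ cyclically; take $L = \{0, 2, 4\}$ and $S = \{1, 3, 5\}$. Then $S$ has no common neighbor in $L$ (vertex $0$ is adjacent to $1$ and $5$ but not $3$; similarly for $2, 4$ by symmetry), while every two-element subset of $S$ does: $\{1,3\}$ has common neighbor $2 \in L$, $\{3,5\}$ has $4$, $\{1,5\}$ has $0$. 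So $c^*(C_6) \ge 3$, hence $c^*(C_6) = 3$; combined with $\dd(C_6) \ge 2$ and \cref{lem:c-and-d}, we need only rule out $\dd(C_6) = 3$, i.e., $\dd(C_6) = 2$. For $\dd(C_6) \le 2$: a lower bound structure of order $3$ would need $x_1,x_2,x_3$ with no common neighbor in $L$ but such that replacing one $x_i$ by $x_i'$ creates a common neighbor; since in $C_6$ the only minimal $S$ of size $3$ with the common-neighbor property is (up to symmetry) $\{1,3,5\}$ with $L \supseteq \{0,2,4\}$, one would need some $x_i'$ incomparable to $x_i$ such that $\{x_1,x_2,x_3\}$ with $x_i \to x_i'$ gains a common neighbor in $L$; I would check the (few, by symmetry) cases and show every candidate $x_i'$ fails, because the replacement set still lacks a common neighbor in $\{0,2,4\}$ — the obstruction being that no single vertex of $C_6$ is adjacent to three of $\{0,2,4\}$-type configurations that arise.

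Finally, for $k \ge 5$, $k \ne 6$, I would show $c^*(C_k) \le 2$, which with $c^*(C_k) \ge 2$ gives $c^*(C_k) = 2$, and then $\dd(C_k) = 2$ follows from \cref{obs:non-bi-arc-d} and \cref{lem:c-and-d} (noting $C_5$ and $C_k$ for $k \ge 7$ are non-bi-arc by \cref{thm:bi-arc}(1)). To get $c^*(C_k) \le 2$ it suffices, by the counting above, to rule out a size-$3$ minimal $S = \{a,b,c\}$: minimality forces $S \subseteq N(w)$ for a common neighbor $w$ of some two-element subset, but $|N(w)| = 2$ in $C_k$, so $S$ cannot have three elements all needed — more carefully, for $S$ of size $3$ to be minimal without a common neighbor in $L$, each pair must have its own common neighbor in $L$, i.e., there are vertices $w_{ab}, w_{bc}, w_{ac} \in L$ with $w_{ab} \in N(a) \cap N(b)$, etc.; in $C_k$ two vertices have a common neighbor iff they are at distance exactly $2$ on the cycle, so $a,b,c$ are pairwise at distance $2$, which on a $k$-cycle is possible only when $k = 6$ (the three even vertices) — for any other $k$, three pairwise-distance-$2$ vertices cannot exist. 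I expect this last distance argument to be the cleanest part; the main obstacle is the $C_6$ analysis, specifically the careful case-check that $\dd(C_6) = 2$ rather than $3$, since there one must genuinely enumerate the possible primed replacements and verify none of them produces a common neighbor, using the symmetry of $C_6$ to keep the casework short.
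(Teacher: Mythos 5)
Your proposal follows essentially the same route as the paper's proof: the key step in both is the observation that minimality of $S$ forces every pair in $S$ to have a common neighbor in $L$, which in $C_k$ means pairwise cyclic distance exactly~$2$, and three pairwise distance-$2$ vertices exist only when $k=6$ (giving $\{0,2,4\}$ or $\{1,3,5\}$); the lower bound $d^*(C_k)\geq 2$ is handled by the paper via a direct small construction ($x_1=0$, $x_2=4$, $x_1'=x_2'=2$, $L=V(C_k)$), whereas you invoke \cref{obs:non-bi-arc-d} plus \cref{lem:c-and-d}, which is a legitimate shortcut. The one place where your proposal is only a plan rather than a proof is the verification that $d^*(C_6)\leq 2$: you correctly identify that one must, after reducing by symmetry to $\{x_1,x_2,x_3\}=\{0,2,4\}$, enumerate the admissible primed vertices (forced into $x_1'\in\{2,4\}$, $x_2'\in\{0,4\}$, $x_3'\in\{0,2\}$) and derive a contradiction, but you leave the casework implicit; the paper carries it through by splitting on whether the $x_i'$ are all distinct, and this is exactly the step where a careless enumeration could go wrong, so it deserves to be written out.
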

\begin{proof}
We will denote the consecutive vertices of $C_k$ by $0,1,\ldots,k-1$.
First let us consider the case that $k\neq 6$.
Let $S$ be a minimal set without a common neighbor in some set $L$.
If $|S|\geq 3$, then by the minimality of $S$, any distinct $u,v\in S$ have a common neighbor in $L$, so also in $V(H)$.
But it can be easily verified that it is only possible, if $k=6$ and the set $S$ contains precisely the vertices from one of the bipartition classes of $C_6$.
As we excluded $k=6$ in this case, we conclude that $c^*(C_k)\leq 2$. 

Now let us show that $d^*(C_k)\geq 2$.
We set $x_1:=0$, $x_2:=4$, $x_1':=x_2':=2$, and $L:=V(C_k)$. If $k=5$, then $0$ and $4$ are adjacent in $C_5$ and do not have a common neighbor.
If $k\geq 7$, then the distance between $0$ and $4$ in $C_k$ is at least $3$, so again $0$ and $4$ do not have a common neigbor in $C_k$.
After replacing at least one of $\{0,4\}$ with $2$, the set has a common neighbor, which is either $1$ or $3$.
Therefore $x_1,x_2,x_1',x_2',L$ form a lower bound structure of order $2$, and thus $d^*(C_k)\geq 2$.

So since now we consider the case $k=6$.
As discussed before, for any minimal set $S$ without a common neighbor in some $L$ such that $|S|\geq3$, it holds that any distinct $u,v\in S$, vertices $u$, $v$ have a common neighbor.
In $C_6$, there are two sets satisfying this property: $\{0,2,4\}$ and $\{1,3,5\}$.
It can be verified that both are minimal sets without a common neighbor in $L:=V(C_6)$, and thus $c^*(C_6)=3$. 

Now suppose that $d^*(C_6)=3$ and let $x_1,x_2,x_3,x'_1,x'_2,x'_3,L$ form a lower bound structure in $C_6$.
By symmetry of $C_6$, let $x_1=0$.
By the definition of a lower bound structure, the set $\{x_1,x_2,x'_3\}$ has a common neighbor in $L$, and thus in $V(C_6)$, so in particular $x_1$ and $x_2$ have a common neighbor.
Thus $x_2\in \{2,4\}$ (recall that the vertices $x_1,x_2,x_3$ have to be distinct, so $x_2\neq 0$).
Similarly, $x_1$ and $x_3$ have a common neigbor, and thus $x_3\in \{2,4\}$.
Since $x_2\neq x_3$, we have $\{x_1,x_2,x_3\}=\{0,2,4\}$ -- by symmetry, let us assume that $x_2=2$ and $x_3=4$.
Now it must hold that $x'_1\in\{2,4\}$ as $\{x'_1,x_2,x_3\}$ has a common neighbor, and, similarly, $x'_2\in \{0,4\}$ and $x'_3\in\{0,2\}$.
If the vertices $x'_1,x'_2,x'_3$ are all distinct, then the set $\{x'_1,x'_2,x'_3\}$ does not have a common neighbor in $V(C_6)$, and thus in $L$.
By symmetry, let $x'_1=x'_2$.
Then $x'_1\in \{2,4\}\cap\{0,4\}$, so $x'_1=x'_2=4$. 
Now if $x'_3=0$, then the set $\{x'_1,x_2,x'_3\}=\{4,2,0\}$ does not have a common neighbor -- a contradiction -- and if $x'_3=2$, then the set $\{x_1,x'_2,x'_3\}=\{0,4,2\}$ does not have a common neighbor -- a contradiction.
Therefore, $d^*(C_6)\leq 2$, which completes the proof.
\end{proof}

We note that $\cc(H)$ is bounded by $\Delta(H)+1$, and the equality is attained only for very specific graphs $H$.

\begin{lemma}\label{lem:delta-c-plus1}
For every graph $H$, it holds that $c^*(H) \leq \Delta(H)+1$.
\end{lemma}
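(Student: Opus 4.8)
We want to show $c^*(H) \leq \Delta(H)+1$. By \cref{def:c} (or the equivalent characterization), $c^*(H)=|S|$ for some $L\subseteq V(H)$ and some set $S\subseteq V(H)$ that is an inclusion-wise minimal set without a common neighbor in $L$. So it suffices to show that any such minimal $S$ has size at most $\Delta(H)+1$. The key structural fact we will exploit is the one already used in \cref{obs:cycles-c-d}: if $|S|\geq 2$, then by minimality, for every single element $s\in S$ the set $S\setminus\{s\}$ has a common neighbor in $L$; in particular, taking $s=s_i$ and then deleting a second element, every pair of vertices in $S$ has a common neighbor (in $V(H)$, even when $|S|\geq 3$). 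We will leverage this to bound $|S|$.

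\textbf{Key step.} Fix any vertex $s^\star\in S$. Since $S\setminus\{s^\star\}$ has a common neighbor in $L$, pick a vertex $w\in L$ adjacent to every vertex of $S\setminus\{s^\star\}$; thus $S\setminus\{s^\star\}\subseteq N_H(w)$, which already gives $|S|-1\leq \deg_H(w)\leq \Delta(H)$, i.e.\ $|S|\leq \Delta(H)+1$. This is the entire argument, and it is short. The only subtlety to check is the degenerate case $|S|\leq 1$: if $S=\emptyset$ then $c^*(H)=0\leq\Delta(H)+1$ trivially, and if $|S|=1$ then $c^*(H)=1$, and since $H$ is non-bi-arc (hence has at least $3$ edges by \cref{thm:bi-arc}~(2)) we certainly have $\Delta(H)\geq 1$, so $1\leq\Delta(H)+1$. (In fact if we do not assume $H$ non-bi-arc, the bound $c^*(H)\leq 1$ forces no edges at all to be missing from some neighborhood structure, and one checks $\Delta(H)+1\geq 1\geq c^*(H)$ regardless; but the lemma is only needed for non-bi-arc $H$.)

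\textbf{Where the work is.} There is essentially no obstacle here — the statement is a one-line consequence of the definition once one observes that minimality of $S$ forces $S$ minus one element to sit inside a single neighborhood. The only thing requiring a sentence of care is handling loops consistently with the paper's convention: the common neighbor $w$ of $S\setminus\{s^\star\}$ may itself lie in $S\setminus\{s^\star\}$ (this is allowed precisely when $w$ has a loop), but this does not affect the inequality $|S\setminus\{s^\star\}|\leq\deg_H(w)$ since a loop contributes $1$ to the degree and the loop vertex is counted once in $N_H(w)$. So I would write: take a minimal $S$ witnessing $c^*(H)$, handle $|S|\leq 1$ directly, and for $|S|\geq 2$ invoke minimality to embed $S\setminus\{s^\star\}$ in $N_H(w)$ for some $w\in L$, concluding $c^*(H)=|S|\leq \deg_H(w)+1\leq\Delta(H)+1$.
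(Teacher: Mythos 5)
Your proof is correct and takes essentially the same approach as the paper: by minimality, $S\setminus\{s^\star\}$ lies inside the neighborhood of some common neighbor $w$, giving $c^*(H)-1\leq\deg_H(w)\leq\Delta(H)$. The paper states this in two lines without the $|S|\leq 1$ edge-case discussion (which is trivial anyway, since $\Delta(H)+1\geq 1\geq c^*(H)$ whenever $c^*(H)\leq 1$, no non-bi-arc assumption needed); your extra care about loops is harmless but unnecessary since $|N_H(w)|\leq\Delta(H)$ holds by definition regardless.
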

\begin{proof}
By the definition of $c^*(H)$, there exists a set $S\subseteq V(H)$ with $|S|=c^*(H)-1$ such that $S$ has a common neighbor in $H$.
So $|S|\leq \Delta(H)$, and thus $c^*(H) \leq \Delta(H)+1$.
\end{proof}

Finally, let us argue that $\cc(H)$ can be arbitrarily smaller that $\Delta(H)$. Let $H$ be a graph obtained from an $r$-leaf star by subdividing each edge twice.
Such $H$ is not a bi-arc graph~\cite{DBLP:journals/combinatorica/FederHH99} and $\Delta(H) = r$.
On the other hand, a small case analysis shows that $\cc(H) = 2$.

\section{Lower bound}
In this section, we show the following lower bound.

\thmlower*

In order to prove \cref{thm:lower-bound}, we will need the following gadgets.

\begin{definition}[Inequality gadget $\NEQ(x,y)$]\label{def:neq}
Let $H$ be a graph and let $x,y$ be distinct vertices of $H$. An \emph{inequality gadget} $\NEQ(x,y)$ is a graph $F$ with $H$-lists $\cL$ and two designated  vertices $u,v$ such that:
\begin{enumerate}[({I}1.)]
\item $\cL(u)=\cL(v)=\{x,y\}$,
\item there exist list homomorphisms $\vphi, \psi: (F,L) \to H$ such that $\vphi(u)=\psi(v)=x$ and $\vphi(v)=\psi(u)=y$,
\item there is no list homomorphism $\vphi: (F,\cL) \to H$ such that $\vphi(u)=\vphi(v)$.
\end{enumerate}
\end{definition}

\begin{definition}[Compatibility gadget $\C(a,a',b,b')$]\label{def:compatib}
Let $H$ be a graph and let $a,a',b,b'$ be vertices of $H$. A \emph{compatibility gadget} $\C(a,a',b,b')$ is a graph $C$ with $H$-lists $\cL$ and two designated vertices $u,v$, such that:
\begin{enumerate}[(C1.)]
\item $\cL(u)=\{a,a'\}$ and $\cL(v)=\{b,b'\}$,
\item there exist list homomorphisms $\vphi,\psi: (C,\cL) \to H$ such that $\vphi(u)=a$, $\vphi(v)=b$, $\psi(u)=a'$, and $\psi(v)=b'$,
\item there is no list homomorphism $\vphi: (C,\cL) \to H$ such that $\vphi(u)=a$ and $\vphi(v)=b'$,
\item there is no list homomorphism $\vphi: (C,\cL) \to H$ such that $\vphi(u)=a'$ and $\vphi(v)=b$.
\end{enumerate}
\end{definition}

In the following lemma we show that both gadgets can be constructed.

\begin{lemma}\label{lem:gadgets}
Let $H$ be a graph. If $H$ has a lower bound structure $(L,\{x_1,\ldots,x_d\},\{x'_1,\ldots,x'_d\})$ of order $d\geq3$, then there exist:
\begin{itemize}
\item an inequality gadget $\NEQ(x_i,x_i')$ for every $i \in [d]$,
\item a compatibility gadget $\C(x_i,x'_i,x_j,x'_j)$ for every distinct $i,j\in [d]$.
\end{itemize}
Furthermore, each gadget has 10 vertices.
\end{lemma}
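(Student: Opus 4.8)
The plan is to construct both gadgets explicitly from the lower bound structure, using it as the only source of the incomparabilities and absent common neighbors that the axioms of \cref{def:neq} and \cref{def:compatib} demand.

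First I would assemble the raw material. Write $S=\{x_1,\ldots,x_d\}$, and for $P\subseteq[d]$ let $S_P$ be the set obtained from $S$ by replacing $x_\ell$ with $x_\ell'$ for $\ell\in P$. By the argument in the proof of \cref{lem:c-and-d} we may assume that $S$ is inclusion-minimal among sets with no common neighbor in $L$; hence for each $j\in[d]$ there is $c_j\in L$ adjacent to every $x_\ell$ with $\ell\neq j$ but not to $x_j$, and $N(x_\ell)\setminus N(x_{\ell'})$ meets $L$ for distinct $\ell,\ell'$. For each $i$ fix incomparability witnesses $p_i\in N(x_i)\setminus N(x_i')$ and $q_i\in N(x_i')\setminus N(x_i)$, and fix the replacement granted by \cref{def:d}: a nonempty $Z\subseteq[d]$ and $z\in L$ adjacent to $x_\ell'$ for $\ell\in Z$ and to $x_\ell$ for $\ell\notin Z$ (relabelling, $1\in Z$). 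Because $d\ge 3$ there is always at least one coordinate not touched by the designated vertices of a gadget, and at least two for the inequality gadget.

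The construction template is the following. A gadget consists of the designated vertices (with the prescribed two-element lists), a constant number of ``router'' vertices whose lists are intersections of $L$ with neighborhoods of pinned coordinates --- lists of the shape $L\cap\bigcap_{\ell\in I}N(x_\ell)$, which are nonempty exactly because $S$ is minimal --- and a few helper vertices with singleton lists pinning individual coordinates to some $x_\ell$ or $x_\ell'$. The one hard constraint it exploits is that no vertex may simultaneously lie in $L$ and be adjacent to all of $x_1,\ldots,x_d$. For $\NEQ(x_i,x_i')$, a router adjacent to $u$, to $v$, and to helpers forced onto $x_\ell$ for $\ell\neq i$ forbids $u=v=x_i$, since its colour would have to be such a vertex; the companion forbidden option $u=v=x_i'$ is handled by chaining $u$ and $v$ through a second router together with the witnesses $p_i,q_i$. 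The colourings $(u,v)\mapsto(x_i,x_i')$ and $(u,v)\mapsto(x_i',x_i)$ are extended by routing the routers to $c_i$ and $z$ and helpers to their forced colours, giving (I2); (I1) is immediate and dummy vertices bring the count to exactly $10$. For $\C(x_i,x_i',x_j,x_j')$ the same template is used, now genuinely invoking a third index $k\notin\{i,j\}$ --- the place where $d\ge 3$ is needed --- so that the mixed pairs $(x_i,x_j')$ and $(x_i',x_j)$, but not the aligned pairs $(x_i,x_j)$ and $(x_i',x_j')$, collapse to the forbidden ``common neighbor of all of $S$'' situation, while the aligned pairs are realised through the $c_\ell$'s and $z$.

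The routine part is then the finite case analysis verifying (I1)--(I3) and (C1)--(C4) and the vertex count. The main obstacle, where I expect essentially all the effort to go, is twofold. First, \cref{def:d} only guarantees a single ``good'' replacement $S_Z$, so a router that kills one sign pattern does not settle the others; one must wire the chain of routers so that precisely the forbidden patterns (and no admissible one) reduce to the impossibility above, and one cannot symmetrise naively. Second, the gadget must have a constant number of vertices even when $d$ is large, so only $\Oh(1)$ coordinates can be pinned by helpers and the rest must be absorbed into the pre-intersected router lists, which is exactly why inclusion-minimality of $S$ is indispensable. A natural way to tame the first issue is to first prove a normal-form lemma --- that $H$ then also admits a lower bound structure in which \emph{every} single-coordinate swap $S_{\{i\}}$ has a common neighbor in $L$, obtained by exchanging the roles of $x_i$ and $x_i'$ and re-choosing primed vertices as suitable neighbors of $c_i$ --- after which the routers needed for all the required patterns become available uniformly.
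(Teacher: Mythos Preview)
Your proposal rests on a misreading of \cref{def:d}. You write that ``\cref{def:d} only guarantees a single `good' replacement $S_Z$'', and you therefore fix one nonempty $Z$ and one witness $z$. In fact the definition quantifies universally: for \emph{every} choice $(y_1,\ldots,y_d)\in\prod_i\{x_i,x_i'\}$ with at least one $y_i=x_i'$, the set $\{y_1,\ldots,y_d\}$ has a common neighbor in $L$. In particular, every single-coordinate swap $S_{\{i\}}$ already has a common neighbor in $L$; the ``normal-form lemma'' you propose to prove is just the hypothesis. This misreading is what generates your ``first main obstacle'' and the machinery you sketch to overcome it.

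Once the correct reading is in place, the paper's construction is much more direct than yours. For each $\ell$ it fixes a vertex $\tilde x_\ell'\in L$ adjacent to every $x_p$ with $p\neq\ell$ and to $x_\ell'$ (the common neighbor of the single-swap $S_{\{\ell\}}$), hence non-adjacent to $x_\ell$; and a vertex $\tilde x_\ell\in N(x_\ell)\setminus N(x_\ell')$. Each gadget is then two paths between $u$ and $v$, every internal vertex carrying a two-element list of the form $\{\tilde x_a',\tilde x_b'\}$, $\{x_a,x_b\}$, or $\{\tilde x_a,\tilde x_a'\}$, chosen so that a colour at one end forces the colour at each subsequent vertex via a single non-adjacency ($x_\ell\notin N(\tilde x_\ell')$ or $x_\ell'\notin N(\tilde x_\ell)$). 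One path forces $u=x_i\Rightarrow v\neq x_i$ (resp.\ $v\neq x_j'$), the other forces $u=x_i'\Rightarrow v\neq x_i'$ (resp.\ $v\neq x_j$); the admissible colourings extend by taking the ``other'' element of each list. Only the three indices $i,j$ and one auxiliary index are used, which is where $d\ge 3$ enters, and the two paths have $5$ and $7$ vertices glued at their endpoints, giving exactly $10$.

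Separately, your router sketch has a concrete flaw even after the misreading is corrected. A router with list $L\cap\bigcap_{\ell\neq i}N(x_\ell)$ adjacent to both $u$ and $v$ does forbid $u=v=x_i$, but it equally forbids $u=x_i,\ v=x_i'$: the router would need a colour in $N(x_i)$, and the list intersected with $N(x_i)$ is $L\cap\bigcap_\ell N(x_\ell)=\emptyset$. So any scheme that makes a single $L$-listed vertex adjacent to both designated vertices cannot realise the good cases; this is why the paper routes $u$ and $v$ through a path rather than through a common neighbor.
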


\begin{proof}
Without loss of generality let us assume that $i=1$ and $j=2$, the other cases clearly follow by the symmetry of the lower bound structure.

Consider any $\ell \in [d]$.
Recall that there exists a vertex $\tx'_\ell$ in $\left( \bigcap_{p \in [d]\setminus \{\ell\}} N(x_p) \cap N(x_\ell') \right) \cap L$; if there is more than one such a vertex, we choose an arbitrary one.
Note that $\tx_\ell' \notin N(x_\ell)$.
Furthermore, since $x_\ell$ and $x_\ell'$ are incomparable, there exists $\tx_\ell \in N(x_\ell)\setminus N(x_\ell')$.

\medskip
\noindent\textbf{Inequality gadget.}
First, let us construct an inequality gadget $\NEQ(x_1,x_1')$. We take two paths with lists of consecutive vertices:
\begin{itemize}
\item $\{x_1',x_1\}$, $\{\tx_1',\tx_2'\}$, $\{x_2,x_3\}$, $\{\tx_3',\tx_1'\}$, $\{x_1,x_1'\}$,
\item $\{x_1,x_1'\}$, $\{\tx_1,\tx_1'\}$, $\{x_1,x_2\}$, $\{\tx_2',\tx_3'\}$, $\{x_3,x_1\}$, $\{\tx_1',\tx_1\}$, $\{x_1',x_1\}$.
\end{itemize}
We join these two paths by identifying their endvertices, i.e., we identify the first vertices into a single vertex $u$ and the last vertices into a single vertex $v$. We claim that the constructed graph, with designated vertices $u$ and $v$, is an $\NEQ(x_1,x_1')$-gadget.
Clearly, property (I1.) of \cref{def:neq} is satisfied.

Now, if we map $u$ to $x_1$ and $v$ to $x_1'$, then we can extend this by mapping every vertex from the first path to the second vertex from the list and mapping every vertex from the second path to the first vertex from the list.
Similarly, if we map $u$ to $x_1'$ and $v$ to $x_1$, then we can extend this by mapping every vertex from the first path to the first vertex from the list and mapping every vertex from the second path to the second vertex from the list.
This proves that property (I2.) is satisfied.

Now note that mapping $u$ to $x_1$ forces every vertex from the first path to be mapped to the second vertex from its list, and thus $v$ cannot be mapped to $x_1$.
Similarly, mapping $u$ to $x_1'$ forces every vertex from the second path to be mapped to the second vertex from its list and thus $v$ cannot be mapped to $x_1'$.
This shows that property (I3.) is satisfied as well and the constructed graph is indeed an inequality gadget $\NEQ(x_1,x_1')$.

\medskip
\noindent\textbf{Compatibility gadget.} Now let us focus a compatibility gadget $\C(x_1,x_1',x_2,x_2')$.
Again, we introduce two paths, whose consecutive vertices have lists as follows:
\begin{itemize}
\item $\{x_1,x_1'\}$, $\{\tx_2',\tx_1'\}$, $\{x_3,x_2\}$, $\{\tx_1',\tx_3'\}$, $\{x_2,x_1\}$, $\{\tx_2,\tx_2'\}$, $\{x_2,x_2'\}$,
\item $\{x_1',x_1\}$, $\{\tx_1',\tx_1\}$, $\{x_3,x_1\}$, $\{\tx_2',\tx_3'\}$, $\{x_2',x_2\}$.
\end{itemize}
We again identify the first vertices of the paths into a vertex $u$ and the last vertices into $v$ and claim that the obtained graph satisfies the properties of a compatibility gadget.
Clearly, property (C1.) from \cref{def:compatib} is satisfied.

Now if we map $u$ to $x_1$ and $v$ to $x_2$, then we can extend this by mapping every vertex from the first path to the first vertex from its list and mapping every vertex from the second path to the second vertex from its list.
Similarly, if we map $u$ to $x_1'$ and $v$ to $x_2'$, then we can extend this by mapping every vertex from the first path to the second vertex from its list and every vertex from the second path to the first vertex from its list. This proves property (C2.).

Now note that mapping $u$ to $x_1$ forces every vertex from the first path to be mapped to the first vertex from its list and thus $v$ cannot be mapped to $x_2'$.
Similarly, mapping $u$ to $x_1'$ forces every vertex from the second path to be mapped to the first vertex from its list and thus $v$ cannot be mapped to $x_2$.
Hence, properties (C3.) and (C4.) are satisfied, and the constructed graph is indeed a compatibility gadget $\C(x_1,x_1',x_2,x_2')$.
\end{proof}

Now we are ready to prove \cref{thm:lower-bound}.
\begin{proof}[Proof of \cref{thm:lower-bound}]
If $\dd(H) \leq 2$, then result follows immediately from \cref{thm:kernel-n} -- note that the number $N$ of vertices satisfies $N \geq k$.

Thus let as assume that $d := \dd(H) \geq 3$. We will present a linear-parameter transformation from \sat{d}.
More precisely, for and instance $\Phi = C_1 \land C_2 \land \ldots \land C_m$ of \sat{d} with $n$ variables,
in polynomial time we will construct an instance $(G,\cL)$ of \listcoloring{H} such that:
\begin{enumerate}[(1.)]
\item $(G,\cL)$ is a yes-instance of \listcoloring{H} if and only if $\Phi$ is a yes-instance of \sat{d},
\item $G$ has a vertex cover of size $\Oh(n)$.
\end{enumerate}
As \sat{d} does not to have a generalized kernel of size $\Oh(n^{d-\eps})$, when parameterized by the number $n$ of variables~\cite{DBLP:journals/jacm/DellM14}, this will already imply our theorem.

Let $(L, (x_1,\ldots,x_d),(x_1',\ldots,x_d'))$ be a lower bound structure in $H$ of order $d$.

\paragraph{Variable gadget.}
For each variable $v$ in the formula $\Phi$, introduce a \emph{variable gadget} $\Var_v$ with $2d$ special vertices: $a_1,\ldots,a_d,\oa_1,\ldots,\oa_d$, such that:
\begin{enumerate}
\item $\cL(a_i)=\cL(\oa_i)=\{x_i,x_i'\}$ for every $i\in [d]$,
\item there exists a list homomorphism $\vphi_F: (\Var_v,\cL) \to H$ such that for every $i\in[d]$ it holds that $\vphi_F(a_i)=x_i$ and $\vphi_F(\oa_i)=x_i'$,
\item there exist a list homomorphism $\vphi_T: (\Var_v,\cL) \to H$ such that for every $i\in[d]$ it holds that $\vphi_T(a_i)=x'_i$ and $\vphi_T(\oa_i)=x_i$,
\item for any list homomorphism $\vphi: (\Var_v,\cL) \to H$, either for every $i\in [d]$ it holds that $\vphi(a_i)=x_i$ and $\vphi(\oa_i)=x_i'$,
or for every $i\in [d]$ it holds that $\vphi(a_i)=x_i'$ and $\vphi(\oa_i)=x_i$.
\end{enumerate}
We will interpret mapping all vertices $a_i$ to $x_i'$ as setting the variable $v$ to true,
and mapping all $a_i$ to $x_i$ as setting $v$ to false.

Now let us show how to construct $\Var_v$. First, introduce vertices $a_i,\oa_i$ with lists $\cL(a_i)=\cL(\oa_i)=\{x_i,x_i'\}$ for every $i\in[d]$. For every $i\in [d]$, we call \cref{lem:gadgets} to obtain an inequality gadget $\NEQ(x_i,x_i')$, and we identify the designated vertices of the gadget with $a_i$ and $\oa_i$, respectively.
Then, for every $i\in[d-1]$, we call \cref{lem:gadgets} to obtain a compatibility gadget $\C(x_i,x_i',x_{i+1},x'_{i+1})$, and identify its designated vertices $u,v$ with $a_i$ and $\oa_{i+1}$, respectively. This completes the construction of $\Var_v$ (see \cref{fig:var-gadget}).

\begin{figure}
\centering{
\begin{tikzpicture}[every node/.style={draw,circle,fill=white,inner sep=0pt,minimum size=8pt},every loop/.style={}]
\node[label=left:\footnotesize{$a_1$}] (a1) at (0,0) {};
\node[label=right:\footnotesize{$\oa_1$}] (b1) at (3,0) {};
\node[label=left:\footnotesize{$a_2$}] (a2) at (0,-1.5) {};
\node[label=right:\footnotesize{$\oa_2$}] (b2) at (3,-1.5) {};
\node[label=left:\footnotesize{$a_3$}] (a3) at (0,-3) {};
\node[label=right:\footnotesize{$\oa_3$}] (b3) at (3,-3) {};
\node[label=left:\footnotesize{$a_4$}] (a4) at (0,-4.5) {};
\node[label=right:\footnotesize{$\oa_4$}] (b4) at (3,-4.5) {};

\foreach \k in {1,2,3,4}
{
\draw (a\k)--++(1,0)--++(0,0.3)--++(1,0)--++(0,-0.6)--++(-1,0)--++(0,0.3);
\draw (2,1.5-1.5*\k)--(b\k);
\node[draw=none,fill=none] (n) at (1.5,1.5-1.5*\k) {$\NEQ$};
}

\foreach \k in {1,2,3}
{
\draw (a\k)--++(0,-0.3)--++(0.5,0)--++(0,-0.9)--++(-1,0)--++(0,0.9)--++(0.5,0);
\node[draw=none,fill=none] (c) at (0,0.75-1.5*\k) {$\C$};
}

\foreach \k in {2,3,4}
{
\draw (a\k)--++(0,0.3);
}

\end{tikzpicture}
}
\caption{The construction of a variable gadget $\Var_v$ for $d=4$.}\label{fig:var-gadget}
\end{figure}
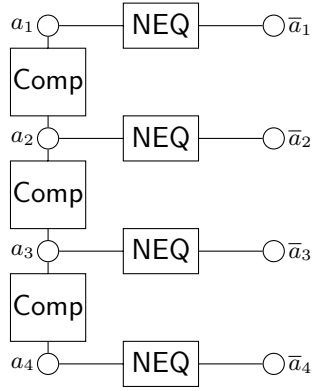

Let us verify that $\Var_v$ satisfies the desired properties.
Property 1. follows directly from the construction.
To see properties 2. and 3., define $\vphi_F(a_i):=x_i$, $\vphi_F(\oa_i):=x_i'$, $\vphi_T(a_i):=x_i'$, and $\vphi_T(\oa_i):=x_i$ for every $i\in [d]$.
By condition (I2.) of \cref{def:neq}, $\vphi_F$ and $\vphi_T$ can be extended to all vertices of inequality gadgets.
By condition (C2.) of \cref{def:compatib}, $\vphi_F$ and $\vphi_T$ can be extended to all vertices of compatibility gadgets, and thus $\Var_v$ satisfies properties 2. and 3.
Now assume that $\vphi: (\Var_v,\cL) \to H$ is a list homomorphism and $\vphi(a_i)=x_i$ for some $i\in[d]$.
By the properties (C3.) and (C4.) of \cref{def:compatib}, it holds that $\vphi(a_j)=x_j$ for every $j\in[d]$.
By property (I3.) of \cref{def:neq}, $\vphi(\oa_j)=x_j'$ for every $j\in [d]$.
Similarly, it can be verified that if $\vphi(a_i)=x_i'$ for any $i\in[d]$, then $\vphi(a_j)=x_j'$ and $\vphi(\oa_j)=x_j$ for every $j\in[d]$,
so the last property follows.

\paragraph{Construction of $(G,\cL)$.}
Now consider a clause $C =(\ell_1 \lor \ell_2 \lor \ldots \lor \ell_d)$, where each $\ell_i$ is a literal, i.e., a variable or its negation.
We assume that the ordering of literals within each clause is fixed.
For each clause $C$, we introduce a \emph{clause vertex} $c$ with list $\cL(c)=L$.
Now consider a literal $\ell_j$ of $C$, which corresponds to an occurrence of a variable $v$.
If this occurrence is positive, i.e., $\ell_j = v$, then we add an edge joining $c$ and the vertex $a_j$ of $\Var_v$.
Similarly, if this occurrence is negative, i.e., $\ell_j = \lnot v$, then we add an edge joining $c$ and the vertex $\oa_j$ of $\Var_v$.
This completes the construction of $(G,\cL)$.
Clearly $(G,\cL)$ was constructed in polynomial time.

Now observe that the clause vertices form an independent set, so the vertices of all variable gadgets $\Var_v$ form a vertex cover of $G$.
The number of the vertices in each variable gadget $\Var_v$ depends only on $d \leq |V(H)|$, so is a constant.
Thus there is a vertex cover in $G$ of size $\Oh(n)$.

\paragraph{Correctness.}
Now suppose that $\Phi$ is satisfiable and let $\psi$ be a satisfying assignment.
We will define a list $H$-coloring $\vphi : (G,\cL) \to H$.
For every variable $v$ which is set to true (resp., false) by $\psi$, we map all vertices of $\Var_v$ according to the mapping $\vphi_T$ (resp., $\vphi_F$).
It remains to extend $\vphi$ to clause vertices; note that they form an independent set.
Consider a clause $C=(\ell_1 \lor \ell_2 \lor \ldots \lor \ell_d)$ and its corresponding clause vertex $c$.
Let $N(c)=\{u_1,\ldots,u_d\}$ be the neighbors of $c$, such that $\cL(u_i)=\{x_i,x_i'\}$, i.e., $u_i$ is either $a_i$ or $\oa_i$ in the appropriate variable gadget, corresponding to $\ell_i$.
Since $\psi$ is a satisfying assignment, at least one literal $\ell_j$ of $C$ is set by $\psi$ to true.
This means that the vertex $u_j$ is mapped to $x_j'$.
By \cref{def:d}, the set $\{\vphi(u_1),\ldots,\vphi(u_d)\}$ has a common neighbor in $\cL(c)=L$, and thus $\vphi$ can be extended to $c$.

Suppose now that there exists a list homomorphism $\vphi:(G,\cL)\to H$.
For every variable $v$ of $\Phi$, if $\vphi(a_1)=x_1'$, where $a_1$ belongs to $\Var_v$, we set $v$ to true, and otherwise we set $v$ to false.
Let us show that it is a satisfying assignment of $\Phi$.
Consider a clause $C=(\ell_1 \lor \ell_2 \lor \ldots \lor \ell_d)$ and its corresponding clause vertex $c$.
Again let $N(c)=\{u_1,\ldots,u_d\}$ be the set of neighbors of $c$, such that $\cL(u_i)=\{x_i,x_i'\}$, i.e., $u_i$ belongs to $\Var_v$, where $v$ corresponds to $\ell_i$.
Suppose that $C$ is not satisfied, i.e., every literal in $C$ is false.
This means that for every $j \in [d]$, the vertex $u_j$ is mapped to $x_j$  by $\vphi$.
Therefore $\vphi(N(c))=\{x_1,\ldots,x_d\}$ and by \cref{def:d}, the set $\vphi(N(c))$ does not have a common neighbor in $\cL(c)=L$. Thus $\phi$ cannot be a list homomorphism from $(G,\cL)$ to $H$, a contradiction.
This completes the proof.
\end{proof}

\section{Simple kernel}
We first show that by a standard marking procedure, for an instance $(G,\cL)$ with vertex cover of size $k$, one can always obtain a kernel with $\Oh(k^{c^*(H)})$ vertices and edges. 
We will use the following observation.

\begin{observation}\label{obs:minimal-neighbors}
Let $H$ be a graph, let $(G,\cL)$ be an instance of \listcoloring{H}, and let $X$ be a vertex cover of $G$. Assume that a mapping $\vphi: X\to V(H)$ is a list homomorphism from $(G[X],\cL)$ to $H$, such that for every vertex $v\in V(G)\setminus X$, for every set $S\subseteq N_G(v)$ of size at most $c^*(H)$, it holds that $\bigcap_{u\in S} N_H(\vphi(u))\cap \cL(v)\neq\emptyset$. Then $\vphi$ can be extended to a list homomorphism from $(G,\cL)$ to $H$.
\end{observation}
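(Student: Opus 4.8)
The plan is to extend $\varphi$ to the vertices of $V(G)\setminus X$ one by one, independently of each other. This is legitimate because $X$ is a vertex cover, so $G-X$ has no edges; hence the color we pick for one vertex of $V(G)\setminus X$ can never conflict with the color picked for another, and the only constraints on $\varphi(v)$ for $v\in V(G)\setminus X$ come from $N_G(v)\subseteq X$, where $\varphi$ is already defined. So fix $v\in V(G)\setminus X$, write $L:=\cL(v)$ and let $T:=\{\varphi(u)\mid u\in N_G(v)\}\subseteq V(H)$. Extending $\varphi$ to $v$ amounts to finding a color $a\in L$ that is adjacent in $H$ to every vertex of $T$, i.e., to showing that $T$ has a common neighbor in $L$.

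To do this, I would argue by contradiction: suppose $T$ has no common neighbor in $L$. Then, since $T$ is finite, there is an inclusion-wise minimal set $S'\subseteq T$ that has no common neighbor in $L$. By \cref{def:c}, every minimal set without a common neighbor in any subset of $V(H)$ has size at most $c^*(H)$, so $|S'|\le c^*(H)$. Now, since each vertex of $S'\subseteq T$ has at least one preimage under $\varphi$ inside $N_G(v)$, I can pick one such preimage for each element of $S'$ and obtain a set $S\subseteq N_G(v)$ with $\{\varphi(u)\mid u\in S\}=S'$ and $|S|=|S'|\le c^*(H)$. Then
\[
  \bigcap_{u\in S} N_H(\varphi(u))\cap \cL(v)=\bigcap_{t\in S'} N_H(t)\cap L=\emptyset,
\]
which contradicts the hypothesis of the observation (applied to this $v$ and this $S$). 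Hence $T$ has a common neighbor $a_v\in L=\cL(v)$.

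Finally, I would set $\varphi(v):=a_v$ for every $v\in V(G)\setminus X$ and check that the resulting total map is a list homomorphism from $(G,\cL)$ to $H$: every edge of $G$ either lies inside $X$ — and is mapped to an edge of $H$ because $\varphi|_X$ is assumed to be a list homomorphism from $(G[X],\cL)$ to $H$ — or joins some $v\in V(G)\setminus X$ to some $u\in N_G(v)\subseteq X$, and is mapped to an edge of $H$ since $a_v\in N_H(\varphi(u))$ by the choice of $a_v$; moreover $\varphi(v)\in \cL(v)$ for all $v$. I do not expect a genuine obstacle here: the only point that needs a little care is passing from the minimal "bad" set $S'$ living in $H$ back to a set $S$ of at most $c^*(H)$ neighbors of $v$ of the same size (different neighbors of $v$ may receive the same color under $\varphi$), and the rest is an unwinding of the definitions of $c^*(H)$, "common neighbor", and list homomorphism.
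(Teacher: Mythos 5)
Your proof is correct and takes essentially the same approach as the paper's: argue by contradiction, use the independence of $V(G)\setminus X$ to extend to each $v$ independently, and extract a minimal set with no common neighbor in $\cL(v)$ to invoke the bound $c^*(H)$. The only cosmetic difference is that you take the minimal set $S'$ on the $H$-side and pull it back to a set $S\subseteq N_G(v)$ of the same size, whereas the paper takes the minimal set $S\subseteq N_G(v)$ directly and then observes that $\varphi$ must be injective on $S$, so $|S|=|\varphi(S)|\le c^*(H)$; both variants are equally valid.
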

\begin{proof}
Since $I=V(G)\setminus X$ is an independent set, it suffices to show that for every vertex $v\in I$, we can extend $\vphi$ to $v$.
This means that $\bigcap_{u\in N_G(v)} N_H(\vphi(u))\cap \cL(v)\neq \emptyset$.
For contradiction, suppose there is $v\in I$ such that $\bigcap_{u\in N_G(v)} N_H(\vphi(u))\cap \cL(v)=\emptyset$.
Moreover, let $S\subseteq N_G(v)$ be minimal set such that $\bigcap_{u\in S} N_H(\vphi(u))\cap \cL(v)=\emptyset$.
By minimality of $S$, the mapping $\vphi$ is injective on $S$ and $C=\{\vphi(u) \ | \ u\in S\}$ is a minimal set of vertices of $H$ without a common neighbor in $\cL(v)$.
Therefore, $|S|=|C|\leq c^*(H)$.
But then $\bigcap_{u\in S} N_H(\vphi(u))\cap \cL(v)\neq \emptyset$, a contradiction.
\end{proof}

Let us point out that in the following theorems, we assume that a vertex cover of size $k$ is given.
It is well-known that given a graph $G$ whose minimum vertex cover is of size $k$, in polynomial time a vertex cover of size at most $2k$ can be found.
Since we are interested in kernels of size in the form $\Oh(k^{f(H)})$, we can use a vertex cover of size at most $2k$, as then we have $\Oh((2k)^{f(H)})=\Oh(k^{f(H)})$ (we always treat $H$ as fixed, so $f(H)$ is some constant).
For simplicity, from now on we can just assume that the minimum vertex cover is given.

\thmmarking*

\begin{proof}
  We assume that $H$ is not a bi-arc graph, since otherwise the problem is tractable and there is a constant-sized kernel. Thus $\cc(H) \geq 2$ by \cref{obs:non-bi-arc-d}.
Let $(G,\cL)$ be an instance of \listcoloring{H} and let $X$ be the vertex cover of size $k$. Let $c:=c^*(H)$ and $h=|V(H)|$. We define the subgraph $G'$ of $G$ as follows. We add all vertices and edges of $G[X]$ to $G'$. Furthermore, for every set $X'\subseteq X$ of size at most $c$, and for every set $L\subseteq V(H)$, if there is at least one vertex $v\in V(G)\setminus X$ with list $L$ adjacent to all vertices of $X'$, then we fix one such $v$, and we add $v$ to $G'$ along with the edges from $v$ to $X'$. Let us point out that a vertex $v$ can be chosen for more than one set $X'$, and in such a case, this vertex is adjacent in $G'$ to all vertices of every such a set $X'$. That completes the construction of $G'$.

Clearly, $G'$ is constructed in polynomial time (recall that $H$ is a fixed graph, so $c$ and $h$ are constants). Since $G'$ is a subgraph of $G$, the size $k'$ of a minimum vertex cover in $G'$ is at most $k$. Furthermore, by the construction, the number of vertices of $G'$ is at most $k+k^{c}\cdot 2^{h}$. Indeed, there are $k$ vertices of $X$, and each vertex in $V(G')\setminus X$ was chosen for one of $k^{c}\cdot 2^{h}$ pairs $(X',L)$. Moreover, there are at most $k^2$ edges inside $X$, and for every pair $(X',L)$ we added at most $c$ edges. Therefore, there are at most $k^2+c\cdot 2^{h}\cdot k^{c}$ edges in $G'$. Since $H$ is fixed and by \cref{obs:non-bi-arc-d}, we conclude that $G'$ has $\Oh(k^{c^*(H)})$ vertices and edges.

\paragraph{Correctness.} Let us verify that $(G,\cL)$ is a yes-instance if and only if $(G',\cL)$ is a yes-instance.
Since $G'$ is a subgraph of $G$, if there exists a list homomorphism from $(G,\cL)$ to $H$, then the same mapping restricted to the vertices of $G'$ is a list homomorphism from $(G',\cL)$ to $H$.
Thus, it suffices to show that if $(G',\cL)$ is a yes-instance, then so is $(G,\cL)$.
So suppose there is a list homomorphism $\vphi': (G',\cL)\to H$.
We define $\vphi': (G',\cL)\to H$ as follows.
We first set $\vphi|_X=\vphi'$.
Note that $\vphi$ respects the edges inside $X$ since $\vphi'$ is a homomorphism.
Now we want to extend $\vphi$ to remaining vertices of $G$.
By \cref{obs:minimal-neighbors}, it suffices to show that for every $v\in V(G)\setminus X$, and for every set $S\subseteq N(v)$ of size at most $c^*(H)$, it holds that $\bigcap_{u\in S} \vphi(u)\cap \cL(v)\neq\emptyset$.
Suppose there is $v\in V(G)\setminus X$, and a set $S\subseteq N(v)$ of size at most $c^*(H)$such that $\bigcap_{u\in S} \vphi(u)\cap \cL(v)=\emptyset$.
Recall that at least one vertex $u$ with list $L=\cL(v)$ and adjacent to all vertices of $S$ was added to $G'$ along with its edges to $S$.
Since $\vphi'$ is a list homomorphism, $\vphi'(u)$ is a common neighbor of $\vphi'(S)=\vphi(S)$ and $\vphi'(u)\in \cL(u)=L=\cL(v)$, a contradiction.
This completes the proof.
\end{proof}

Combining \cref{thm:lower-bound}, \cref{thm:marking}, and \cref{lem:c-and-d} we obtain the following corollary.

\begin{corollary}\label{cor:d-lower-upper}
Let $H$ be a non-bi-arc graph. Then \listcoloring{H} parameterized by the size $k$ of a vertex cover:
\begin{enumerate}[(a)]
\item has a kernel with $\Oh(k^{d^*(H)+1})$ vertices and edges,
\item has no kernel with $\Oh(k^{d^*(H)-\eps})$ vertices and edges, for any $\eps>0$, unless \containment.
\end{enumerate}
\end{corollary}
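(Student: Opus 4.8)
The plan is to simply assemble the three ingredients already established. For part~(a), I would invoke \cref{thm:marking}, which yields a kernel with $\Oh(k^{c^*(H)})$ vertices and edges, and then apply \cref{lem:c-and-d}, which guarantees $d^*(H)\le c^*(H)\le d^*(H)+1$. Since $H$ is a fixed graph, both $c^*(H)$ and $d^*(H)$ are constants, so $\Oh(k^{c^*(H)})$ is subsumed by $\Oh(k^{d^*(H)+1})$; replacing the exponent by the (possibly larger) integer $d^*(H)+1$ only weakens the bound, so the claimed kernel follows verbatim. For part~(b), there is nothing further to prove: it is exactly the statement of \cref{thm:lower-bound}, which already rules out an $\Oh(k^{d^*(H)-\eps})$ kernel for every non-bi-arc $H$ under \containment.

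Consequently there is no genuine obstacle in this proof; all the substance lies in the earlier results it cites --- the marking argument behind \cref{thm:marking} and the linear-parameter transformation from \sat{d} behind \cref{thm:lower-bound}. The only point worth a moment of care is the bookkeeping with asymptotic notation described above, namely that $c^*(H)$ and $d^*(H)$ are absolute constants for fixed $H$, so the containment $\Oh(k^{c^*(H)})\subseteq\Oh(k^{d^*(H)+1})$ of bounds is legitimate. The content of the corollary is that it pins the optimal exponent $\delta(H)$ from the Problem to the interval $[d^*(H),d^*(H)+1]$, which is precisely the gap that motivates the conjecture and the sharper results (for powers of cycles, for graphs with $c^*(H)\ge\Delta(H)$, and the $\eps$-improvement of \cref{thm:lhom-eps}) that follow.
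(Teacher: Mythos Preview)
Your proposal is correct and matches the paper's own justification exactly: the paper simply states that the corollary follows by combining \cref{thm:lower-bound}, \cref{thm:marking}, and \cref{lem:c-and-d}, which is precisely the assembly you describe.
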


\newcommand{\vect}[1]{\ensuremath{\mathbf{#1}}\xspace}
 
\section{Improved kernels}
Let us point out that for graphs $H$ such that $c^*(H)=d^*(H)$, the bounds given by \cref{thm:marking} and \cref{thm:lower-bound} are tight.
By \cref{lem:c-and-d}, for all the remaining graphs $H$, we have $c^*(H)=d^*(H)+1$.
Therefore, since now we focus on graphs $H$ with $c^*(H)=d^*(H)+1$.

First, we show that \listcoloring{H} parameterized by the size $k$ of the minimum vertex cover admits a kernel with $\Oh(k^{d^*(H)+1-\eps})$ vertices and edges. The result follows from~\cite[Theorem~22]{DBLP:conf/cp/Carbonnel22}. Let us state it here in a form that is suitable for us.

\begin{theorem}[Carbonnel~\cite{DBLP:conf/cp/Carbonnel22}]\label{thm:csp-epsilon}
Let $\Gamma$ be a constraint language over domain $D$ with arity $r\geq 2$. Assume that for every $r$-ary relation $R\in \Gamma$ the following holds:
\begin{itemize}
\item Let $c_1,\ldots,c_r,c'_1,\ldots,c'_r\in D$. If for every $(c''_1,\ldots,c''_r)\in \{c_1,c'_1\}\times\ldots,\times \{c_r,c'_r\}\setminus \{(c_1,\ldots,c_r)\}$, it holds that $(c''_1,\ldots,c''_r)\in R$, then $(c_1,\ldots,c_r)\in R$.
\end{itemize}
Then there exists a polynomial-time algorithm that takes an instance $(V,C)$ of $\CSP{\Gamma}$ with $n$ variables, and outputs an instance $(V,C')$ of $\CSP{\Gamma}$, where $C'\subseteq C$, with the same solution set as $(V,C)$, and with $\Oh(n^{r-\eps})$ constraints, where $\eps=2^{1-r}>0$.
\end{theorem}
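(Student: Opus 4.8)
The plan is to prove \cref{thm:csp-epsilon} by the \emph{polynomial method}: encode each constraint as a vector in a carefully chosen vector space over a fixed finite field $\mathbb{F}$, keep a linear basis of the resulting set of vectors, and argue that a vector lying in the span of the others corresponds to a logically redundant constraint. The only non-routine point is bringing the dimension of that space down from the naive $\Oh(n^r)$ to $\Oh(n^{r-\eps})$ with $\eps=2^{1-r}$, and this is exactly where the near-clause closure hypothesis on the $r$-ary relations of $\Gamma$ is used.

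First I would reduce to the case of a single $r$-ary relation. Relations of arity $s<r$ in $\Gamma$ can only generate, after discarding duplicate constraints with identical scope, at most $\Oh(n^{s})=\Oh(n^{r-1})=o(n^{r-\eps})$ constraints in total (here we use $\eps=2^{1-r}\le \tfrac12<1$), so all of these we simply keep. Grouping the remaining constraints by which $r$-ary relation $R\in\Gamma$ they use and sparsifying each group separately multiplies the final bound only by the constant $|\Gamma|$, so it suffices to handle one relation $R$, where all constraints are of the form $R(v_1,\dots,v_r)$ with distinct scope tuples in $V^r$. Next, introduce indicator variables $x_{v,d}$ for $v\in V$, $d\in D$; an assignment $\varphi\colon V\to D$ corresponds to the $0/1$ vector with $x_{v,d}=1$ iff $\varphi(v)=d$, and such \emph{proper} vectors satisfy $x_{v,d}x_{v,d'}=0$ for $d\ne d'$ together with $\sum_{d} x_{v,d}=1$. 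On proper vectors the constraint $R(v_1,\dots,v_r)$ holds exactly when the polynomial $P_R(v_1,\dots,v_r):=\sum_{(d_1,\dots,d_r)\notin R}\prod_{i=1}^{r}x_{v_i,d_i}$ evaluates to $0$; reducing modulo the ideal generated by the relations above leaves a combination of square-free monomials on the distinct variables among $v_1,\dots,v_r$. The map ``constraint $\mapsto$ reduced polynomial'' is what we feed to linear algebra: if the polynomial of a constraint $C$ is an $\mathbb{F}$-linear combination of those of a set $S$ of constraints, then every proper assignment satisfying all of $S$ also satisfies $C$.

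The heart of the argument is to exhibit a fixed family $\mathcal{B}$ of polynomials, with $|\mathcal{B}|=\Oh(n^{r-2^{1-r}})$, whose $\mathbb{F}$-span contains $P_R(v_1,\dots,v_r)$ for \emph{every} scope tuple. I would build $\mathcal{B}$ by recursion on $r$. The key closure fact is that the near-clause property is inherited by the \emph{slices} $R_a:=\{(d_2,\dots,d_r)\mid (a,d_2,\dots,d_r)\in R\}$ of a near-clause-closed relation: applying the hypothesis with the first coordinate frozen to $\{a\}$, any forbidden tuple of $R_a$ witnesses a second forbidden tuple of $R_a$ inside the relevant box; iterating, the same holds when any block of coordinates is frozen. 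Split $[r]$ into blocks $A,B$ of sizes $\lceil r/2\rceil,\lfloor r/2\rfloor$ and write $P_R$ as $\sum_{p}\mu_p(A)\cdot Q_p(B)$, where $\mu_p$ ranges over the $\Oh(n^{\lceil r/2\rceil})$ square-free monomials on the $A$-variables and $Q_p$ is the forbidden-polynomial of the near-clause-closed $B$-slice determined by $p$; by induction the $Q_p$'s lie in a span of size $\Oh(n^{\lfloor r/2\rfloor-2^{1-\lfloor r/2\rfloor}})$, so $P_R$ lies in a span of size $\Oh(n^{r-2^{1-\lfloor r/2\rfloor}})$. This already beats $n^{r}$; to make the loss in the exponent halve cleanly at each level of recursion one has to treat separately the (few) slices that are genuinely ``clause-like'' and thus still need full degree, charging them against the complementary coordinate block, so that the residual behaves like an instance of smaller arity. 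The base case $r=2$ — a binary relation hitting no $2\times 2$ combinatorial box in exactly one point — is settled by a short direct argument giving loss $\tfrac12$, which the recursion then propagates to $2^{1-r}$ for all $r$.

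Given $\mathcal{B}$, the algorithm computes for each input constraint the coordinate vector of its reduced polynomial in $\mathcal{B}$, runs Gaussian elimination to select a maximal $\mathbb{F}$-linearly independent subset $C'\subseteq C$ of constraints, and outputs $(V,C')$; then $|C'|\le|\mathcal{B}|=\Oh(n^{r-\eps})$ and the running time is polynomial in $n$ (with $|D|$, $|\Gamma|$, $r$ constants). Correctness follows from the encoding: $C'\subseteq C$ gives one inclusion of solution sets, and each $C_i\in C\setminus C'$ has its polynomial in the span of those of $C'$, hence is satisfied by every proper assignment satisfying $C'$; since proper assignments are precisely the encodings of genuine maps $\varphi\colon V\to D$, the solution sets coincide. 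The main obstacle is the third paragraph: constructing $\mathcal{B}$ with the exact exponent $r-2^{1-r}$ requires (i) identifying precisely which sub-relations of a near-clause-closed relation remain near-clause-closed — slices do, but the ``intersection over a pair of values in one coordinate'' need not, which is what blocks a cleaner $n^{r-1}$ bound; (ii) a charging argument to absorb the clause-like slices into the complementary block; and (iii) solving the resulting recursion for the loss exactly. Everything else is routine bookkeeping.
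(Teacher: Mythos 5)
This theorem is not proven in the paper at all: it is stated as a cited result, namely Theorem~22 of Carbonnel~\cite{DBLP:conf/cp/Carbonnel22}, reformulated for the authors' purposes. There is therefore no paper proof to compare you against, and I will evaluate your sketch on its own terms.

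Your first two paragraphs (reducing to a single $r$-ary relation, the indicator-variable encoding, and the observation that the slices $R_a$ inherit near-clause closure by taking $c_1=c_1'$ in the hypothesis) are sound. But the third paragraph, which you yourself flag as ``the main obstacle,'' is where the theorem's content lives, and it does not contain an argument. Two concrete problems. First, your two recursions are inconsistent: the block split into sizes $\lceil r/2\rceil$ and $\lfloor r/2\rfloor$, if the induction closed, would give loss $2^{1-\lfloor r/2\rfloor}$, which is \emph{strictly larger} (a strictly \emph{better} bound) than the target $2^{1-r}$ for every $r\geq 4$; you then pivot, without reconciling the mismatch, to a ``loss halves at each level'' recursion that drops $r$ by one per step. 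Second, and more decisively, the $r=2$ base case with loss $\tfrac12$ is asserted rather than proved, and the machinery you set up cannot prove it. Take $R$ to be the equality relation on a domain $D$ with $|D|\geq 3$. This $R$ is rectangular, hence near-clause-closed; its reduced forbidden polynomial is $P_R(u,v)=1-\sum_{d\in D}x_{u,d}x_{v,d}$, and for distinct unordered pairs $\{u,v\}$ these polynomials have pairwise disjoint supports among degree-two monomials, hence are linearly independent over every field. So the span has dimension $\Theta(n^2)$, and ``keep a linear basis of the constraint polynomials'' cannot reduce a dense instance below $\Theta(n^2)$ constraints --- yet the theorem promises $\Oh(n^{3/2})$, and equality is in fact sparsifiable to $\Oh(n)$ constraints by a spanning-tree argument. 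This shows the encoding-plus-rank scheme you describe does not establish the bound even in the base case; Carbonnel's argument must either use a tailored, relation-dependent encoding (the spanning-tree fact for equality hints at this) or a non-linear-algebraic extremal argument. The remaining steps --- ``charging'' clause-like slices against the complementary block, ``the residual behaves like an instance of smaller arity'' --- are placeholders for what would need to be the bulk of a real proof. In short, the setup is reasonable but the central combinatorial lemma is missing, and the concrete pieces given are both internally inconsistent and contradicted by the equality example.
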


\cref{thm:csp-epsilon} together with \cref{obs:non-bi-arc-d} yields the \cref{thm:lhom-eps} which we restate here.

\thmlhomeps*
\begin{proof}
Let $(G,\cL)$ be a reduced instance of \listcoloring{H} with a vertex cover $X$ of size $k$.
We first define a constraint language $\Gamma$ as follows.
Let $D=V(H)$, and for every $r\in [c^*(H)]$, for every $(r+1)$-tuple $\sL=(L_1,\ldots,L_r,L)$ of incomparable subsets of $V(H)$, we add to $\Gamma$ an $r$-ary relation $R_{\sL,r}$ that consists of all $r$-tuples from $L_1\times\ldots\times L_r$ that have a common neighbor in $L$.
We also add to $\Gamma$ a binary edge relation $E$, i.e., the set of all pairs $(u,v)$ such that $uv\in E(H)$, and for each incomparable set $L\subseteq V(H)$, we add $L$ as a unary relation.
This completes the definition of $\Gamma$.

\begin{claim}\label{claim:gamma-assump}
$\Gamma$ satisfies the assumptions of \cref{thm:csp-epsilon}.
\end{claim}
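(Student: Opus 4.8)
The plan is to verify the hypothesis of \cref{thm:csp-epsilon} relation by relation. The language $\Gamma$ contains three kinds of relations: unary relations (incomparable sets $L$), the binary edge relation $E$, and the relations $R_{\sL,r}$ for $r \in [c^*(H)]$. Since \cref{thm:csp-epsilon} requires every relation to be viewed as $r$-ary for a fixed arity $r$, the first step is the standard padding observation: a relation of arity $r' < r$ is extended to arity $r$ by adding $r - r'$ dummy coordinates ranging over all of $D$, and one checks that the ``one excluded tuple'' closure property is inherited under such padding (if a tuple differs from all tuples of a product box except possibly in a dummy coordinate, membership is forced there trivially). So it suffices to check the property for each relation in its native arity.

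For the unary relations and the binary edge relation the check is immediate: a unary relation $L$ fails the property only if $L = D \setminus \{c\}$ for a single excluded element, but $L$ is an incomparable set and $D \setminus \{c\}$ is incomparable only in degenerate cases — more simply, one just notes these relations are small and the only way the hypothesis can be violated is if $L$ misses exactly one point while containing both members of some pair $\{c, c'\}$ with $c \ne c'$, which one dismisses directly; similarly for $E$. The real content is the relations $R_{\sL,r}$, so I would spend the bulk of the argument there.

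Fix $R = R_{\sL,r}$ with $\sL = (L_1, \ldots, L_r, L)$, and suppose $c_1, \ldots, c_r, c'_1, \ldots, c'_r \in D$ are such that every tuple in $(\{c_1,c'_1\} \times \cdots \times \{c_r,c'_r\}) \setminus \{(c_1,\ldots,c_r)\}$ lies in $R$. I want to conclude $(c_1,\ldots,c_r) \in R$, i.e.\ that $(c_1,\ldots,c_r) \in L_1 \times \cdots \times L_r$ and that $\{c_1,\ldots,c_r\}$ has a common neighbor in $L$. First, membership in the product box: since $r \geq 1$, there is at least one ``mixed'' tuple in $R$ (e.g.\ flip one coordinate, using $r \geq 2$ from the theorem statement, or handle $r=1$ separately where the hypothesis forces $c_1 = c'_1$ and the claim is trivial), and comparing such a tuple to the all-primed tuple, both of which lie in $R$, shows $c_i \in L_i$ for the coordinates not flipped; ranging over which coordinate is flipped covers all $i$. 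Next, the common-neighbor condition: here I invoke the definition of $\dd(H) = c^*(H) - 1 < r$. Suppose for contradiction $(c_1,\ldots,c_r) \notin R$, so $\{c_1,\ldots,c_r\}$ has no common neighbor in $L$. After discarding coordinates where $c_i = c'_i$ (they do not affect the product-of-pairs structure), and after passing to a minimal ``bad'' sub-index-set — restricting attention to a minimal $S \subseteq [r]$ such that $\{c_i : i \in S\}$ has no common neighbor in $L$ while $\{y_i : i \in S\}$ does whenever at least one $y_i = c'_i$ — the data $(L, (c_i)_{i\in S}, (c'_i)_{i \in S})$ is precisely a \betterlb in the sense of \cref{def:d}, except that one must check incomparability of $c_i$ and $c'_i$ for $i \in S$. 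This incomparability is where the minimality of $S$ is used: if $N_H(c_i) \subseteq N_H(c'_i)$ then flipping coordinate $i$ from $c_i$ to $c'_i$ cannot create a common neighbor in $L$, so $\{c_j : j \in S \setminus \{i\}\} \cup \{c'_i\}$ having a common neighbor would force $\{c_j : j \in S\setminus\{i\}\}$ to as well, contradicting minimality; the symmetric containment is ruled out because $\{c_j : j \in S\}$ has no common neighbor while some superset-on-one-coordinate does. Hence the order of this \betterlb is $|S| \leq r$, but also $|S| \geq $ something forcing $|S| > \dd(H) = c^*(H) - 1$, i.e.\ $|S| \geq c^*(H)$; combined with $|S| \le r \le c^*(H)$ we would get $|S| = c^*(H)$, yet by definition $\dd(H)$ is the \emph{largest} order of a \betterlb, so $\dd(H) \geq |S| = c^*(H) = \dd(H)+1$, a contradiction. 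Therefore $(c_1,\ldots,c_r) \in R$.

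The main obstacle I anticipate is getting the reduction to a minimal bad index set exactly right and cleanly extracting a genuine \betterlb from it — in particular handling coordinates where $c_i = c'_i$, ensuring the ``at least one flipped coordinate'' clause of \cref{def:d} is met by the minimal $S$, and verifying incomparability rather than merely non-equality of $c_i, c'_i$. Once that bookkeeping is done, the contradiction with the maximality of $\dd(H)$ in \cref{def:d} (using $\dd(H) = c^*(H) - 1$, which holds by \cref{lem:c-and-d} since we are in the case $c^*(H) = d^*(H)+1$) closes the argument. The padding step and the unary/edge cases are routine and I would dispatch them in a sentence each.
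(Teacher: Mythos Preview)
Your overall plan is right in spirit, but you are working much harder than necessary and one step is genuinely broken.

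First, two simplifications you miss that make the paper's argument a few lines long. (i) The hypothesis of \cref{thm:csp-epsilon} concerns only relations of the \emph{maximum} arity $r=c^*(H)$; there is nothing to verify for the unary relations, the edge relation, or the relations $R_{\sL,r'}$ with $r'<c^*(H)$, so the padding discussion and the minimal-$S$ reduction are unnecessary. (ii) For incomparability of $c_i$ and $c'_i$: every tuple in the box except $(c_1,\ldots,c_r)$ lies in $R\subseteq L_1\times\cdots\times L_r$, so by inspecting a tuple that agrees with $(c_1,\ldots,c_r)$ in coordinate $i$ and another that has $c'_i$ there, you get $c_i,c'_i\in L_i$. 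Since $L_i$ is an incomparable set by construction, $c_i$ and $c'_i$ are incomparable whenever $c_i\neq c'_i$. This is the whole argument; no neighborhood chasing is needed.

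Second, your neighborhood-containment argument for incomparability does not work as written. If $N_H(c_i)\subseteq N_H(c'_i)$, then replacing $c_i$ by $c'_i$ can only \emph{enlarge} the intersection $\bigcap_\ell N_H(\cdot)$, so it certainly \emph{can} create a common neighbor in $L$; your assertion ``cannot create a common neighbor'' is the wrong direction. Moreover, your intended contradiction ``$\{c_j:j\in S\setminus\{i\}\}$ has a common neighbor, contradicting minimality'' is not a contradiction at all: minimality of $S$ says exactly that every proper subset of $\{c_j:j\in S\}$ \emph{does} have a common neighbor in $L$. Only the symmetric case $N_H(c'_i)\subseteq N_H(c_i)$ is ruled out by your method (a common neighbor after flipping would already be a common neighbor before), and that is not enough for incomparability. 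The clean fix is point (ii) above.

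With $r=c^*(H)$ and incomparability obtained from the $L_i$'s, the tuple $(L,(c_1,\ldots,c_r),(c'_1,\ldots,c'_r))$ is directly a \betterlb of order $c^*(H)=d^*(H)+1$, contradicting the definition of $d^*(H)$; no passage to a minimal sub-index set is needed.
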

\begin{claimproof} 
For the sake of contradiction, suppose that $\Gamma$ does not satisfy the assumptions of \cref{thm:csp-epsilon}.
Then there is a relation $R_{\sL,c^*(H)}$ of arity $c^*(H)$ (note that the requirement in the statement of \cref{thm:csp-epsilon} is only for the relations of maximum arity), and there are $c_1,\ldots,c_r,c'_1,\ldots,c'_r\in V(H)$ such that for every $(c''_1,\ldots,c''_r)\in \{c_1,c'_1\}\times\ldots,\times \{c_r,c'_r\}\setminus \{(c_1,\ldots,c_r)\}$, it holds that $(c''_1,\ldots,c''_r)\in R$, but $(c_1,\ldots,c_r)\notin R$.
We set $x_i=c_i$ and $x'_i=c'_i$ for $i\in[c^*(H)]$.
Recall that the relation $R_{\sL,c^*(H)}$ was introduced for a $c^*(H)$-tuple $\sL=(L_1,\ldots,L_{c^*(H)},L)$ of incomparable subsets of $V(H)$ and $R\subseteq L_1\times\ldots\times L_{c^*(H)}$.
Thus, for every $i\in [c^*(H)]$, the vertices $x_i,x'_i$ are incomparable.
Moreover, by the definition of $x_i,x'_i$, we have that:
\begin{itemize}
\item $\bigcap_{i \in [d]} N(x_i) \cap L= \emptyset$, and
\item $\bigcap_{i \in [d]} N(y_i) \cap L \neq \emptyset$ where $y_i \in \{x_i,x_i'\}$ and there exists $i \in [d]$ such that $y_i = x_i'$. 
\end{itemize}
Therefore, $L$, $(x_1,\ldots,x_{c^*(H)})$, and $(x'_1,\ldots,x'_{c^*(H)})$ form a lower bound structure of size $c^*(H)=d^*(H)+1$ in $H$, a contradiction.
\end{claimproof}

\paragraph{$\CSP{\Gamma}$ instance.} Now let us define an instance $(V,C)$ of $\CSP{\Gamma}$ equivalent to $(G,\cL)$.
We first set $V=X$.
Furthermore, for every pair $(u,v)\in X^2$ such that $uv\in E(H)$, we add constraint $(E,(u,v))$.
Moreover, for every $v\in X$, we add the constraint $(\cL(v),v)$.
Finally, for every $r\in [c^*(H)]$, for every incomparable $L\subseteq V(H)$, and for every $r$-tuple $(v_1,\ldots,v_r)$ of vertices of $X$, if there is a vertex $u\in V(H)\setminus X$ with $\cL(u)=L$ and adjacent to all vertices of $(v_1,\ldots,v_r)$, then we add a constraint $(R_{\sL,r},(v_1,\ldots,v_r))$, where $\sL=(\cL(v_1),\ldots,\cL(v_r),L)$.
Note that for every $i\in [r]$, the set $\cL(v_i)$ is incomparable, and thus there is $R_{\sL,r}\in \Gamma$.
This completes the construction of the instance $(V,C)$.
It follows from \cref{obs:minimal-neighbors} and the definition of $(V,C)$ that there is one-to-one correspondence between solutions of the instance $(V,C)$ of $\CSP{\Gamma}$ and list homomorphisms $\vphi: (G[X],\cL)\to H$ that can be extended to $G$.

By \cref{claim:gamma-assump}, we can call \cref{thm:csp-epsilon} on instance $(V,C)$ and in polynomial time we obtain an instance $(V,C')$ with $C'\subseteq C$, the same solution set, and with $\Oh(k^{c^*(H)-\eps})$ constraints. 

\paragraph{\listcoloring{H} instance.} Let us now define the desired instance $(G',\cL)$ of \listcoloring{H}.
We start with $G[X]$.
Then, for every constraint of $C'$ which corresponds to a tuple $(v_1,\ldots,v_r)$ of vertices of $X$ and a set $L$, we fix one vertex $u$ with $\cL(u)=L$ and adjacent to all vertices of $(v_1,\ldots,v_r)$, and we add $u$ to $G'$ along with its all edges to $(v_1,\ldots,v_r)$.
This completes the construction of $G'$.

Observe that there are $k$ vertices and at most $k^2$ edges in $G'[X]$, and there are at most $\Oh(k^{c^*(H)-\eps})$ vertices in $V(G')\setminus X$ and at most $\Oh(k^{c^*(H)-\eps})$ edges that are not contained in $X$.

It remains to show the equivalence of instances $(G,\cL)$ and $(G',\cL)$.
Since $G'$ is a subgraph of $G$, if $(G,\cL)\to H$, then $(G',\cL)\to H$.
So now suppose that $(G',\cL)\to H$ and let $\vphi': (G',\cL) \to H$ be a list homomorphism.
We define $\vphi: (G,\cL) \to H$ as follows.
For every $v\in X$, we set $\vphi(v)=\vphi'(v)$.
Note that if we treat $\vphi$ as an assignment of the variables in $V$, then $\vphi$ satisfies all the constraints of $C'$.
By \cref{thm:csp-epsilon}, $\vphi$ satisfies all the constraints of $C$.
But this means that $\vphi$ can be extended to a list homomorphism from $(G,\cL)$ to $H$.
This completes the proof.
\end{proof}

\subsection{Polynomial method}
In this section we present how to obtain smaller kernel using low degree polynomials, similar as in~\cite{DBLP:journals/toct/JansenP19}.
Consider an instance $(G,\cL)$ of \listcoloring{H} with a vertex cover $X$ of size $k$.
For every $v\in X$, $u\in V(H)$, we introduce a boolean variable $y_{v,u}$ whose value will correspond to mapping $v$ to $u$ or not.

In a similar way to \cite[Definition 4]{JansenP19Coloring}, we will define a choice assignment.
\begin{definition}
Given sets $X$ and $V(H)$, let $y_{v,u} \in \{0,1\}$ for $v \in X$, $u \in V(H)$ be a set of boolean variables and let $\vect{y}$ be the vector containing all these variables. We say \vect{y} is given a \emph{choice assignment} if for all $v \in X$:
 \[\sum_{u\in V(H)} y_{v,u} =1.\]
\end{definition}
 
Intuitively, a vector \vect{y} is given a choice assignment if and only if it directly corresponds to mapping vertices of $X$ to $V(H)$ so that vertex $v$ is mapped to $u$ whenever $y_{v,u}=1$. 
By the definition of a choice assignment, this means each vertex is mapped to precisely one vertex.

For a mapping $\vphi: X\to V(H)$, we say that a choice assignment of \vect{y} \emph{corresponds to $\vphi$} if for every $v\in X$, we have $y_{v,u}=1$ if and only if $\vphi(v)=u$.

We now state what it means to be able to forbid a certain mapping by low degree polynomials.
\begin{definition}
Let $H$ be a graph, let $L_1,\ldots,L_r,L\subseteq V(H)$ be incomparable sets, and let $F=L_1 \times \ldots \times L_r$.
Let $S = (s_1,\ldots,s_r)\in F$ be a sequence such that $\bigcap_{i=1}^r N(s_i)\cap L = \emptyset$. 
We say that \emph{$S$ can be forbidden (on distinct vertices $v_1,\ldots,v_r\in X$) by polynomials of degree $d$ with respect to pair $(F,L)$} if there exists a polynomial $p_S$ of degree at most $d$ on variables $\vect{y} :=(y_{v,u} \mid v\in X,u\in V(H))$ such that for all choice assignments to $\vect{y}$ the following holds: 

For $\ell\in [r]$, let $s_\ell'\in V(H)$ be such that $y_{v_\ell,s'_\ell} = 1$ (note that there is exactly one such $s'_\ell$) and let $S'=(s'_1,\ldots,s'_r)\in F$.  Then:
\begin{itemize}
\item If $S' = S$, then $p(\vect{y}) \neq 0$.
\item If $S'$ has a common neighbor in $L$, meaning $\bigcap_{i=1}^r N(s'_i) \neq \emptyset$, then $p(\vect{y}) = 0$. 
\end{itemize}
\end{definition}
Note that the two cases in the above definition do not cover all possibilities. There are no requirements on what the polynomial does when $\vect{y}$ is not given a choice assignment, and there are also no requirements on what happens if $\vect{y}$ is given an assignment that corresponds to a set with no common neighbor in $L$, not equivalent to $S$. This is on purpose.

We will use the following result of Jansen and Pieterse~\cite{DBLP:journals/toct/JansenP19}.
\begin{theorem}[{{\cite[Theorem~3.1, Claim~3.2]{DBLP:journals/toct/JansenP19}}}]\label{thm:d-poly-root-csp}
Let $P$ be a set of equalities over $m$ variables $\textrm{Var}=\{x_1,\ldots,x_m\}$,
where each equality is of the form $f(x_1,\ldots,x_m)=0$ for some multivariate polynomial $f$ of degree $d$ over $\mathbb{Z}_2$.
In time polynomial in $|P|$ and $m$ we can find a subset $P' \subseteq P$ of size at most $m^d+1$, such that 
an assignment of variables in $\textrm{Var}$ satisfies all equalities in $P'$ if and only if it satisfies all equalities in $P$.
\end{theorem}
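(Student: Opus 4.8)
The plan is to prove \cref{thm:d-poly-root-csp} by a standard linear-algebra argument carried out in the space of low-degree multilinear polynomials over $\mathbb{Z}_2$. First I would put each polynomial $f$ appearing in $P$ into multilinear normal form: since we only evaluate at points of $\{0,1\}^m$ and $x^2=x$ there, we may repeatedly replace each power $x_i^e$ (for $e\geq 1$) by $x_i$. This does not change the function $\{0,1\}^m\to\mathbb{Z}_2$ computed by $f$ and does not increase the degree, so afterwards every $f$ is a $\mathbb{Z}_2$-linear combination of multilinear monomials $\prod_{i\in I}x_i$ with $|I|\leq d$. Let $W$ be the $\mathbb{Z}_2$-vector space spanned by all such monomials; then $\dim W=\sum_{i=0}^{d}\binom{m}{i}\leq m^d+1$. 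Identify each multilinearized $f$ with its coefficient vector $\mathbf{v}_f\in W$. The key observation is that, for a fixed assignment $a\in\{0,1\}^m$, the evaluation map $g\mapsto g(a)$ is a $\mathbb{Z}_2$-linear functional on $W$: if $g=\sum_{I}\lambda_I\prod_{i\in I}x_i$, then $g(a)=\sum_I \lambda_I\prod_{i\in I}a_i$ depends linearly on the $\lambda_I$.

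Next I would run the algorithm: build the matrix $M$ over $\mathbb{Z}_2$ whose rows are the vectors $\mathbf{v}_f$ for $f\in P$, perform Gaussian elimination to extract a maximal linearly independent set of rows, and let $P'\subseteq P$ be the corresponding subfamily of equalities. Then $|P'|=\operatorname{rank}(M)\leq\dim W\leq m^d+1$. The matrix has $|P|$ rows and at most $m^d+1$ columns, and multilinearizing the input and row-reducing over $\mathbb{Z}_2$ take time polynomial in $|P|$ and $m$ (treating $d$ as a constant), as required.

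For correctness, one direction is immediate: any assignment satisfying all equalities of $P$ satisfies those of $P'\subseteq P$. Conversely, suppose $a\in\{0,1\}^m$ satisfies $g(a)=0$ for every $g\in P'$, and let $f\in P$ be arbitrary. Since the rows indexed by $P'$ span the row space of $M$, we can write $\mathbf{v}_f=\sum_{g\in P'}\lambda_g\mathbf{v}_g$ with $\lambda_g\in\mathbb{Z}_2$, \ie $f=\sum_{g\in P'}\lambda_g g$ as multilinear polynomials. Applying the linear functional ``evaluate at $a$'' gives $f(a)=\sum_{g\in P'}\lambda_g g(a)=0$, so $a$ satisfies $f=0$; as $f$ was arbitrary, $a$ satisfies all of $P$. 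The argument is essentially routine once this viewpoint is adopted; the only points that genuinely need care are that multilinearization preserves both the Boolean function and the degree bound, and the dimension estimate $\sum_{i=0}^{d}\binom{m}{i}\leq m^d+1$, which is what yields the stated bound on $|P'|$. I would also flag that this implicitly treats $d$ as fixed, since otherwise neither $m^d$ nor the running time need be polynomial in $m$.
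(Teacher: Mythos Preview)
The paper does not actually prove \cref{thm:d-poly-root-csp}; it is quoted verbatim as a black box from Jansen and Pieterse~\cite{DBLP:journals/toct/JansenP19}. Your proposed argument is correct and is precisely the standard proof one finds in that reference: multilinearize over $\mathbb{Z}_2$, view the polynomials as vectors in the $(\sum_{i=0}^d \binom{m}{i})$-dimensional space of degree-$\leq d$ multilinear polynomials, extract a row basis by Gaussian elimination, and use linearity of evaluation to conclude equivalence. The dimension bound $\sum_{i=0}^d\binom{m}{i}\leq m^d+1$ and the observation that $d$ must be treated as a constant for the running time to be polynomial are also handled correctly.
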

 
Now we are ready to show how to use low-degree polynomials to obtain smaller kernels.
The proof is similar to the one in~\cite{JansenP19Coloring}.
 
 \begin{theorem}\label{thm:kernelization-by-polynomials}
Let $H$ be a graph such that for every $r\leq c^*(H)$, for every $F=L_1\times \ldots \times L_r$, where $L_i\subseteq V(H)$ is incomparable for $i\in [r]$, and for every $L\subseteq V(H)$, every $S\in F$ which does not have a common neighbor in $L$, can be forbidden by polynomials of degree $d$ with respect to $(F,L)$. Then \listcoloring{H} parameterized by the size $k$ of a minimum vertex cover has a kernel with $\Oh(k^d + k^2)$ vertices and edges.
 \end{theorem}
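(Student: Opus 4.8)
The plan is to follow the polynomial-method kernelization of Jansen and Pieterse~\cite{JansenP19Coloring,DBLP:journals/toct/JansenP19}: encode the instance as a system of bounded-degree $\mathrm{GF}(2)$ polynomial equalities whose solutions are exactly the list homomorphisms of $G[X]$ that extend to $G$, sparsify it using \cref{thm:d-poly-root-csp}, and decode the surviving equalities back into a small \listcoloring{H} instance. (If $H$ is a bi-arc graph the problem is polynomial-time solvable with a constant-size kernel, so we may assume otherwise; then $c^*(H)\geq 2$ by \cref{obs:non-bi-arc-d}.)

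First I would preprocess $(G,\cL)$ into a reduced instance, so that all lists are incomparable; this leaves $G$, hence the vertex cover $X$ of size $k$, unchanged. Introduce a boolean variable $y_{v,u}$ for each $v\in X$ and $u\in V(H)$; the number of variables is $m=k\cdot|V(H)|=\Oh(k)$. Build a set $P$ of $\mathrm{GF}(2)$-equalities consisting of: (i) for every $v\in X$, the equality $1+\sum_{u\in V(H)}y_{v,u}=0$ together with $y_{v,u}y_{v,u'}=0$ for all $u\neq u'$ -- these force every solution to be a choice assignment; (ii) for every $v\in X$ and $u\notin\cL(v)$, the equality $y_{v,u}=0$; (iii) for every edge $uv\in E(G[X])$ and every pair $(a,b)$ with $ab\notin E(H)$, the equality $y_{u,a}y_{v,b}=0$; and, crucially, (iv) for every $w\in V(G)\setminus X$, every tuple $(v_1,\dots,v_r)$ of distinct vertices of $N_G(w)$ with $r\leq c^*(H)$, and every $S\in\cL(v_1)\times\cdots\times\cL(v_r)$ with no common neighbor in $\cL(w)$, the equality $p_S(\vect{y})=0$, where $p_S$ is the polynomial of degree at most $d$ forbidding $S$ on $v_1,\dots,v_r$ with respect to $\bigl(\cL(v_1)\times\cdots\times\cL(v_r),\,\cL(w)\bigr)$; such $p_S$ exists by the hypothesis of the theorem. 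Using \cref{obs:minimal-neighbors} I would verify that the choice assignments satisfying $P$ are exactly those corresponding to list homomorphisms $\vphi\colon(G[X],\cL)\to H$ that extend to $(G,\cL)\to H$: if $\vphi$ extends, then on every $(v_1,\dots,v_r)$ the realized tuple has the common neighbor $\vphi(w)\in\cL(w)$, so the second case of the forbidding property forces $p_S(\vect{y})=0$; conversely, if $\vphi$ fails to extend then \cref{obs:minimal-neighbors} supplies $w$ and at most $c^*(H)$ of its neighbors whose image $S$ has no common neighbor in $\cL(w)$, and then $p_S(\vect{y})\neq 0$ by the first case, contradicting the presence of the equality $p_S(\vect{y})=0$ in $P$.

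Next I would apply \cref{thm:d-poly-root-csp} with degree parameter $d'=\max(d,2)$ to obtain $P'\subseteq P$ with $|P'|\leq m^{d'}+1=\Oh(k^{d}+k^{2})$ equalities and the same solution set. To decode $P'$, let $G'$ contain all of $G[X]$ with its lists, and, for every pair consisting of a tuple $(v_1,\dots,v_r)$ of distinct vertices of $X$ and a set $L\subseteq V(H)$ such that $P'$ contains at least one type-(iv) equality with that tuple and that $L$, add one fresh vertex $w'$ with $\cL'(w')=L$ and $N_{G'}(w')=\{v_1,\dots,v_r\}$. Each new vertex is charged to a distinct equality of $P'$ and has at most $c^*(H)=\Oh(1)$ incident edges, so $G'$ has $\Oh(k^{d}+k^{2})$ vertices and, with the at most $k^2$ edges inside $X$, also $\Oh(k^{d}+k^{2})$ edges. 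For the equivalence: if $\vphi\colon(G,\cL)\to H$, then $\vphi|_X$ colors $G'[X]=G[X]$ legally and each $w'$ -- whose neighborhood is a subset of $N_G(w)$ for the $w$ with $\cL(w)=L$ that witnessed the equality -- can be colored $\vphi(w)\in L$; conversely, given $\vphi'\colon(G',\cL')\to H$, I would check that $\vphi'|_X$ (as a choice assignment) satisfies all of $P'$: types (i)--(iii) hold because $\vphi'$ is a list homomorphism on $G[X]$ and every coloring is a choice assignment, while a type-(iv) equality $p_S(\vect{y})=0$ in $P'$ has an associated $w'$ which forces $\vphi'(v_1),\dots,\vphi'(v_r)$ to share the neighbor $\vphi'(w')\in L$, so the forbidding property again yields $p_S(\vect{y})=0$. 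Hence $\vphi'|_X$ satisfies $P$ and, by correctness of $P$, extends to $(G,\cL)\to H$.

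The main obstacle is the decoding step. One cannot simply keep the original vertices of $V(G)\setminus X$ that generated surviving equalities, since their neighborhoods in $X$ may have size $\Theta(k)$ and would push the edge count up to $\Oh(k^{d+1})$; instead one introduces one fresh vertex of degree at most $c^*(H)$ per surviving (tuple, list)-pair. Soundness of this rests on the observation that such a fresh vertex enforces the full ``$\{\vphi(v_1),\dots,\vphi(v_r)\}$ has a common neighbor in $L$'' constraint -- which is stronger than any individual $p_S(\vect{y})=0$, yet is implied by the conjunction of all of them over bad $S$ -- and that, by the second case of the forbidding definition, this stronger constraint never excludes a coloring permitted by $P'$; that is precisely what makes the reduced system and the reduced \listcoloring{H} instance equivalent.
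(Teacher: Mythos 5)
Your proof is correct and follows essentially the same approach as the paper: encode the constraints on colorings of $X$ as bounded-degree $\mathrm{GF}(2)$ equalities, sparsify via \cref{thm:d-poly-root-csp}, and decode the surviving equalities back into a subinstance of \listcoloring{H}. One small clarification: the paper's decoding does retain the original vertices of $V(G)\setminus X$ that generated surviving equalities, but it attaches to each such vertex only the at most $c^*(H)$ edges named by each surviving polynomial rather than its full original neighborhood, so the $\Oh(k^{d+1})$ blow-up you worried about applies only to a naïve version; your fresh-vertex decoding (one vertex per surviving tuple--list pair) is an equally valid variant, and your extra type-(i)/(iii) equalities -- and the resulting $d'=\max(d,2)$ -- are harmless but unnecessary since $G[X]$ is kept verbatim.
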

  
\begin{proof}
Let $(G,\cL)$ be an instance of \listcoloring{H} and let $X$ be a minimum vertex cover in $G$ of size $k$. 
Clearly, $G[X]$ has at most $\Oh(|X|^2) = \Oh(k^2)$ vertices and edges.
Note that $I := V(G) \setminus X$ is an independent set in $G$.

We aim to construct an equivalent instance $(G', \cL)$ of \listcoloring{H}, where $G'$ is a subgraph of $G$.
The vertex set of $G'$ is $X \cup I'$ for some $I' \subseteq I$, and the edge sets consists of all edges with both endpoints in $X$,
and a subset of edges joining $I'$ with $X$. 
The set $I'$, as well as the edges incident to this set, will be selected by finding redundant vertices and edges.
To achieve this, we will represent the constraints on the coloring of the neighborhood of such vertices, by low-degree polynomials.

%

\paragraph{Variables and polynomials.} Let $x_1,\ldots,x_h$ be the vertices of $H$ and let $v_1,\ldots,v_k$ be the vertices of $X$.
For every $i \in [k]$, we introduce $h$ variables $y_{v_i,x_j}$ that together represent the color of $v_i$, i.e., $y_{v_i,x_j}$ corresponds to mapping $v_i$ to $x_j$.
Let $\vect{y}$ be the vector containing all variables $y_{v_i,x_j}$. 

So now let us construct a set $P$ of polynomial equalities over $\mathbb{Z}_2$.
Intialize $P$ as empty set.
For every $i \in [k]$, and for every $x_j \in V(H)\setminus \cL(v_i)$, we add to $P$ an equality $y_{v_i,x_j}=0$.
Later we will refer to these polynomials as \emph{list polynomials}.
Then, for each vertex $v \in V(G)\setminus X$, consider each sequence $Y= (u_1,\ldots,u_r)$ of distinct vertices in $N(v)$ with $r\leq c^*(H)$.
For every $S = (s_1,\ldots,s_r)$ with $s_i \in \mathcal{L}(v_i)$ and $\bigcap_{s\in S} N_H(s)\cap \cL(v) = \emptyset$, construct a polynomial $p_{v,S}$ of degree at most $d$ that forbids $S$ on vertices $u_1,\ldots,u_r$ with respect to $(\cL(u_1)\times\ldots\times\cL(u_r),\cL(v))$.
Add $p_{v,S}=0$ to $P$.
This completes the costruction of $P$.
By \cref{obs:minimal-neighbors} and the definition of $P$, there is one-to-one correspondence between the choice assignments of $\vect{y}$ satisfying all equalities if $P$ and list homomorphims from $(G[X],\cL)\to H$ that can be extended to $(G,\cL)$.

Now we apply \cref{thm:d-poly-root-csp} to $P$, and in polynomial time we obtain $P' \subseteq P$ with the same set of satisfying assignments and which has at most $\Oh((h\cdot k)^d)$ polynomials.

\paragraph{Construction of $G'$.} Based on $P'$, we now show how to construct our new instance $(G',\cL)$.
We start with $G[X]$.
Furthermore, for every $v \in I$, add $v$ to $G'$ if one of the polynomials $p_{v,S}$ is contained in $P'$. 
If this is the case, we also add to $G'$ all edges connecting $v$ to all vertices $u_1,\ldots,u_r$ such that some $p_{v,S}\in P'$ was introduced for $u_1,\ldots,u_r$.
This completes the definition of $G'$.
Let us show that the instances $(G,\cL)$ and $(G',\cL)$ of \listcoloring{H} are equivalent.

\begin{claim}
$(G,\cL)\to H$ if and only if $(G',\cL)\to H$.
\end{claim}

\begin{claimproof}
Since $G'$ is a subgraph of $G$, clearly if $(G,\cL)\to H$, then also $(G',\cL)\to H$.
Thus it suffices to show that if $(G',\cL)\to H$, then $(G,\cL)\to H$.
So suppose there is a list homomorphism $\vphi':(G',\cL)\to H$.
We start with defining $\vphi:(G,\cL)\to H$, by setting $\vphi|_X=\vphi'$.
We will show that $\vphi'$ can be extended to a list homomorphism $\vphi:(G,\cL)\to H$.
Since $I=V(G)\setminus X$ is an independent set, it suffices to show, that for every vertex $v\in I$, we can extend $\vphi$ to $v$.

First, consider the choice assignment of $\vect{y}$ corresponding to $\vphi$, i.e., $y_{v_i,x_j}=1$ if and only if $\vphi(v_i)=x_j$.
Observe that it satisfies all the equalities of $P'$.
Indeed, since $\vphi$ respects the lists, the choice assignment of $\vect{y}$ satisfies all list polynomials.
So now consider a polynomial $p_{v,S}$ introduced for vertex $v\in I'$ and its neigbors $u_1,\ldots,u_r$, so that $p_{v,S}$ forbids some tuple $S=(s_1,\ldots,s_r)$ on vertices $u_1,\ldots,u_r$.
Let $S'=(s'_1,\ldots,s'_r)=(\vphi(u_1),\ldots, \vphi(u_r))$.
Since $\vphi'$ is a homomorphism on $G'$, then $\bigcap_{i=1}^r N_H(s'_i)\cap \cL(v) \neq \emptyset$.
Therefore, by the definition of $p_{S,v}$, it holds that $p_{v,S}(\vect{y})=0$ as desired.
Since the choice assignment of $\vect{y}$ satisfies all equalities of $P'$, then it also satisfies all the equalities of $P$.

Now suppose that $\vphi$ cannot be extended to some vertex of $I$, i.e., there is $v\in I$ such that $\bigcup_{u\in N_G(v)} N_H(\vphi(u))\cap \cL(v)=\emptyset$.
Furthermore, let $U$ be a minimal subset of $N_G(v)$ such that $\bigcup_{u\in U} N_H(\vphi(u))\cap \cL(v)=\emptyset$, and let $U=\{u_1,\ldots,u_r\}$.
Note that $r\leq c^*(H)$.
Then there is equality $p_{v,S}=0$ in $P$, where $S=(\vphi(u_1),\ldots,\vphi(u_r))$ and $p_{S,v}$ forbids $S$ on $(u_1,\ldots,u_r)$.
Since the choice assignment of $\vect{y}$ corresponds to $\vphi$, by the definition of $p_{v,S}$, it holds that $p_{v,S}=1$, a contradiction.
This completes the proof.
\end{claimproof}

It remains to bound the size of $G'$.
The subgraph $G'[X]$ has at most $\Oh(k^2)$ vertices and edges, and for each polynomial in $P'$ we add at most $c^*(H)$ edges and at most one vertex.
Hence, the total number of vertices and edges is bounded by $\Oh(k^2 + |P'| \cdot c^*(H)) = \Oh((h \cdot k)^d \cdot h) =\Oh(k^d)$, which completes the proof.
\end{proof}

Using \cref{thm:kernelization-by-polynomials} we can reprove \cref{thm:marking}.
 
\begin{proof}[Alternative proof of \cref{thm:marking}]
Let $L\subseteq V(H)$ and let $S=(s_1,\ldots,s_r)$ be a sequence of vertices of $H$ with $r\leq c^*(H)$ and such that $S$ does not have a common neighbor in $L$.
We define polynomial $p_S$ that forbidds coloring vertices $v_1,\ldots,v_r$ with $s_1,\ldots,s_r$ respectively, by setting $p_S(y):= \prod_{i=1}^{r} y_{v_i,s_i}$. Clearly the degree of $p_S$ is $r\leq c^*(H)$. Moreover, $p_S$ is equal to $1$ if and only $y_{v_i,s_i}=1$ for every $i\in [r]$, which corresponds to coloring vertices $v_1,\ldots,v_r$ respectively with $s_1,\ldots,s_r$. Thus every sequence $S$ of length at most $c^*$ can be forbidden by a polynomial of degree at most $c^*$ and the statement follows from \cref{thm:kernelization-by-polynomials}.
\end{proof}

\subsection{Specific graph classes}
In this section we consider some specific graph classes and we show that by the polynomial method we can obtain kernels of size $\Oh(k^{d^*(H)})$.

As building blocks of the constructed polynomials, we will use the following result of Jansen and Pieterse~\cite{JansenP19Coloring} -- we state it in the form that is suitable for us.

\begin{lemma}[{{\cite[Lemma 5]{JansenP19Coloring}}}]\label{lemma:poly-local}
Let $G,H$ be graphs, let $S \subseteq V(H)$, and let $(v_1,\ldots,v_r) \in V(G)^r$, where $r=|S|+1$.
There is a polynomial $p_S(\vect{y})$ over GF(2) of degree $|S|$, where $\vect{y}$ is the vector containing variables $y_{v,u}$, for $v \in V(G)$, $u \in V(H)$, such that for any partial choice assignment of $\vect{y}$, we have $p_S(\vect{y})=1$ if and only if every color $s \in S$ is used precisely once in $\vect{y}$ among vertices $v_1, \ldots, v_r$.
\end{lemma}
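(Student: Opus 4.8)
The plan is to write the polynomial down explicitly and then check the equivalence by a short parity-and-counting argument. For each color $s \in S$, I would take the $\mathrm{GF}(2)$-linear form $q_s(\vect{y}) := \sum_{i=1}^{r} y_{v_i,s}$, and set
\[
  p_S(\vect{y}) := \prod_{s \in S} q_s(\vect{y}).
\]
Since for distinct $s,s'$ the variable sets $\{y_{v_i,s}\}_{i\in[r]}$ and $\{y_{v_i,s'}\}_{i\in[r]}$ are disjoint, this product is multilinear of degree exactly $|S|$, so the claimed degree bound ($=|S|=r-1$) needs no real computation.

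The next step is the routine observation that, working over $\mathrm{GF}(2)$, $p_S(\vect{y}) = 1$ if and only if $q_s(\vect{y}) = 1$ for every $s \in S$, \ie this holds if and only if each color $s\in S$ is used an \emph{odd} number of times among $v_1,\ldots,v_r$. The direction ``each $s$ used exactly once $\Rightarrow$ $p_S = 1$'' then follows with nothing further to verify, since in that case each $q_s$ evaluates to $1$.

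The only part requiring an actual argument is the converse, passing from ``each $s\in S$ used an odd number of times'' to ``each $s\in S$ used exactly once''. Here I would invoke the partial-choice-assignment hypothesis, which guarantees each $v_i$ carries at most one color: writing $n_s$ for the number of the $v_i$ colored $s$ (for $s\in S$) and $m$ for the number colored with a color outside $S$, we get $m + \sum_{s\in S} n_s \leq r = |S|+1$. Each $n_s$ is odd, hence $\geq 1$, so $\sum_{s\in S} n_s \geq |S|$, forcing $m \leq 1$. But $\sum_{s\in S} n_s$ is a sum of $|S|$ odd numbers, hence $\equiv |S| \pmod 2$; together with $|S| \leq \sum_{s\in S} n_s \leq |S|+1$ this pins it down to $\sum_{s\in S} n_s = |S|$, and since there are $|S|$ terms each at least $1$, every $n_s = 1$.

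The main --- indeed the only --- obstacle is precisely the gap between ``odd parity'', which is all a $\mathrm{GF}(2)$-linear form can detect, and ``exactly once'', which the statement demands. It is bridged by the hypothesis $r = |S|+1$ (exactly one more vertex than colors in $S$) together with the partial-choice assumption, which is what makes the counting step tight; weakening either of these breaks the construction. Everything else --- the degree bookkeeping and the multilinearity --- is purely mechanical.
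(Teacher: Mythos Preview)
Your construction and verification are correct, and they coincide with the original argument of Jansen and Pieterse~\cite{JansenP19Coloring}: the polynomial $p_S(\vect{y})=\prod_{s\in S}\sum_{i=1}^{r} y_{v_i,s}$ over $\mathrm{GF}(2)$ is precisely their construction, and the pigeonhole/parity step you spell out (using $r=|S|+1$ and the at-most-one-color-per-vertex property of a partial choice assignment) is exactly how the gap between ``odd'' and ``exactly once'' is closed there. The present paper does not reproduce a proof of \cref{lemma:poly-local} at all; it merely cites \cite[Lemma~5]{JansenP19Coloring}, so there is nothing further to compare.
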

\subsubsection{Powers of cycles}
\label{sec:powers-of-cycles}
As the first natural class of graphs, we consider cycles, and then also powers of cycles.

For an integer $k$, define $\llbracket k \rrbracket := \{0,1,\ldots,k-1\}$.
We will perform all arithmetic operations on elements of $\llbracket k \rrbracket$ modulo $k$.
For $u,v \in \llbracket k \rrbracket$, we define $\dist(u,v) := \min \{ |u-v|, k-|u-v| \}$. Intuitively, $\dist(u,v)$ denotes the cyclic distance between $u$ and $v$.

For integers $k \geq 3$ and $p \geq 1$, let $C^p_k$ denote the $p$-th power of the $k$-cycle, i.e., the graph with vertex set $\llbracket k \rrbracket$ and the edge set $\{ uv ~|~ \dist(u,v) \leq p\}$.
We will refer to the Hamiltonian cycle of $C^p_k$ with consecutive vertices $0,1,\ldots,k-1$ as the \emph{frame} of $C^p_k$.

As a warm-up, we first consider the case $p=1$, i.e., the case of cycles.
Let us point out that the case of \listcoloring{C_3} is equivalent to \listcoloring{3}, which was studied by Jansen and Pieterse~\cite{JansenP19Coloring}.
Furthermore, the graph $C_4$ is a bi-arc graph, and thus \listcoloring{C_4} can be solved in polynomial time~\cite{DBLP:journals/jgt/FederHH03}.
Therefore, we focus on $C_k$ with $\ell \geq 5$ -- all such graphs are non-bi-arc~\cite{DBLP:journals/jgt/FederHH03}.
Recall that, for $k\geq 5$, by \cref{obs:cycles-c-d}, it only suffices to consider the case $k=6$, as for all other values of $k$, we have that $d^*(C_k)=c^*(C_k)$.
Moreover, recall that by \cref{obs:cycles-c-d}, we have $c^*(C_6)=d^*(H)+1=3$.

\begin{lemma}\label{lem:cycles-poly}
Let $r\leq c^*(C_6)=3$, let $L_1,\ldots,L_r\subseteq V(C_6)$ be incomparable sets, and let $S\in L_1\times\ldots \times L_r$ be an $r$-tuple without a common neighbor in some set $L$.
Then $S$ can be forbidden by a polynomial of degree $2$ with respect to $(L_1\times L_2\times L_3,L)$.
\end{lemma}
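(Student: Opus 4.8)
\textbf{Proof plan for \cref{lem:cycles-poly}.}
The plan is to reduce to the few genuinely new situations and to build the forbidding polynomials by hand using \cref{lemma:poly-local} as the main gadget. First I would dispose of the easy arities: if $r=1$ then $S=(s_1)$ has no neighbor in $L$ means $N_H(s_1)\cap L=\emptyset$, and $p_S(\vect{y})=y_{v_1,s_1}$ is a degree-$1$ polynomial that does the job (with no constraint on the ``replacement'' case, since there is only one coordinate and the first bullet of the definition already covers it vacuously when $S'=S$); similarly for $r=2$ the monomial $y_{v_1,s_1}y_{v_2,s_2}$ has degree $2\le 2$ and forbids $S$ correctly, because whenever $S'$ has a common neighbor in $L$ we in particular cannot have $S'=S$ (as $S$ has none), so at least one of the two variables is $0$. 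Thus the only case requiring work is $r=3$, where we need \emph{degree $2$} but the naive monomial has degree $3$.

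For $r=3$, recall from \cref{obs:cycles-c-d} that a minimal set of three vertices of $C_6$ with no common neighbor must be exactly one bipartition class, $\{0,2,4\}$ or $\{1,3,5\}$; by the symmetry of $C_6$ it suffices to treat $S$ with underlying vertex set $\{0,2,4\}$ (the incomparable lists $L_i$ in $C_6$ are, up to symmetry, the $2$-element ``antipodal-free'' pairs such as $\{0,2\}$, and the relevant $L$ is forced to be the opposite class). The key observation is that within $L_1\times L_2\times L_3$ the tuples that \emph{do} have a common neighbor in $L$ are precisely those in which not all three colors $0,2,4$ are used — equivalently, some color is repeated. So I would set
\[
p_S(\vect{y}) \;=\; p_{\{0,2\}}(\vect{y}) \;=\; \text{(the degree-}2\text{ polynomial of \cref{lemma:poly-local} for the set }\{0,2\}\text{ on }v_1,v_2,v_3),
\]
which by \cref{lemma:poly-local} equals $1$ iff both colors $0$ and $2$ appear exactly once among $v_1,v_2,v_3$; since the third color must then be $4$ (as each $\cL(v_i)\subseteq\{0,2,4\}$ on a choice assignment satisfying the list constraints), this is $1$ exactly on $S$ and $0$ on every in-domain tuple that repeats a color, i.e.\ on every $S'$ with a common neighbor in $L$. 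One subtlety to check is that the three lists $L_1,L_2,L_3$, being incomparable pairs inside $\{0,2,4\}$, are exactly the three pairs $\{0,2\},\{2,4\},\{0,4\}$ in some order, so one of the two ``counted'' colors in \cref{lemma:poly-local} can always be chosen to lie in every $L_i$ — actually any fixed color suffices since the polynomial of \cref{lemma:poly-local} is defined for arbitrary vertices; the point is only that on a valid choice assignment the colors come from $\{0,2,4\}$, which is what forces the ``third color is $4$'' deduction.

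The main obstacle is purely bookkeeping: verifying that ``no common neighbor in $L$'' for a $3$-tuple over these lists is genuinely equivalent to ``all of $0,2,4$ used'', and that every relevant configuration of $(L_1,L_2,L_3,L)$ — after using the dihedral symmetry of $C_6$ and the incomparability assumption — is isomorphic to the one analyzed above. I expect a short case check: by symmetry assume the common-neighbor-free class is $\{0,2,4\}$, note $L$ must then contain a common neighbor of every proper subset, which pins $L$ down to (a superset of) $\{1,3,5\}$, and confirm that the polynomial's two output conditions are met on every choice assignment, including those that violate no list constraint but are not choice assignments (there is nothing to check there, by design). No heavy computation is needed; the degree bound $2$ falls out of \cref{lemma:poly-local} for the two-element set, and correctness follows from the combinatorics of $C_6$ established in \cref{obs:cycles-c-d}.
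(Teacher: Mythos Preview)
Your overall strategy is sound and in fact yields a \emph{simpler} polynomial than the paper's. The paper uses the symmetric sum $p=p_{\{0,2\}}+p_{\{2,4\}}+p_{\{0,4\}}$, whereas you propose the single term $p_{\{0,2\}}$. Both work, and for the same underlying reason: in $C_6$ every vertex has degree $2$, so any $S'$ with a common neighbor (in any $L$) satisfies $|S'|\le 2$, hence some color is repeated among $v_1,v_2,v_3$; a short case check then gives $p_{\{0,2\}}(\vect{y})=0$ (if the repeated color is $0$ or $2$ this is immediate, and otherwise only one slot remains, so at most one of $0,2$ occurs). Conversely $p_{\{0,2\}}=1$ on $S=\{0,2,4\}$. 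So your construction is correct and more economical.

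There is, however, a genuine gap in your \emph{justification}. You assert that the lists $L_1,L_2,L_3$ must be pairs inside $\{0,2,4\}$ (``being incomparable pairs inside $\{0,2,4\}$, are exactly the three pairs $\{0,2\},\{2,4\},\{0,4\}$''), and you use this to conclude that the third color is forced to be $4$. This is false: in $C_6$ every two distinct vertices have incomparable neighborhoods, so \emph{every} subset of $V(C_6)$ is incomparable, and e.g.\ $L_1=\{0,3\}$, $L_2=\{2,5\}$, $L_3=\{1,4\}$ is a perfectly legal choice with $S=(0,2,4)\in L_1\times L_2\times L_3$. Fortunately your polynomial does not need this claim: the definition of ``forbidding'' only requires $p\neq 0$ on $S$ and $p=0$ on tuples with a common neighbor in $L$, and both follow from the degree bound $\Delta(C_6)=2$ as above, independently of the lists. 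So drop the list-structure discussion and argue directly via $\Delta(C_6)=2$; then your proof goes through cleanly (and, as in the paper, first reduce to minimal $S$ so that the $r=3$ case really has underlying set $\{0,2,4\}$ or $\{1,3,5\}$).
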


\begin{proof}
Let us point out that we can assume that $S$ is minimal without a common neigbor in $L$, otherwise we can forbid a subsequence of $S$.
Furthermore, if $r\leq 2$, then in order to forbid $S=(s_1,s_2)$ on $(v_1,v_2)$, we can simply set $p(y)=y_{v_1,s_1}\cdot y_{v_2,s_2}$, which is clearly a polynomial of degree $2$ that forbidds $S$ on $(v_1,v_2)$.

So now we will show that any $S\in L_1\times L_2 \times L_3$, which is minimal without a common neighbor in some $L$ can be forbidden by a polynomial of degree $2$ with respect to $(L_1\times L_2 \times L_3,L)$.
Recall that, as observed in the proof of \cref{obs:cycles-c-d}, such a sequence $S$ consists of three distinct vertices, either $0,2,4$ or $1,3,5$.
By symmetry, it suffices to consider the case of $0,2,4$.
Furthermore, we will show how to forbid the set $\{0,2,4\}$ (instead of forbidding a sequence) on vertices $v_1,v_2,v_3$. 

We call \cref{lemma:poly-local} to construct polynomials $p_{\{i,j\}}$ of degree $2$, for every distinct $i,j\in \{0,2,4\}$, such that for any choice assignment $\mathbf{y}$, it holds that $p_{\{i,j\}}(\mathbf{y})\equiv_21$ if and only if $\mathbf{y}$ corresponds to a coloring of $v_1,v_2,v_3$ such that each of $i,j$ occurs on $v_1,v_2,v_3$ exactly once.
We set $p(\mathbf{y})=p_{\{0,2\}}(\mathbf{y})+p_{\{2,4\}}(\mathbf{y})+p_{\{0,4\}}(\mathbf{y})$.
Clearly, $p$ is of degree $2$,
Furthermore, for a choice assignment $\mathbf{y}$ corresponding to coloring of $v_1,v_2,v_3$ such that each of the colors $0,2,4$ appears precisely once, each polynomial in the sum evaluates to $1$, and $p(\mathbf{y})\equiv_21$. 
Moreover, if $S'$ is a set which has a common neighbor, then note that it must hold $|S'|\leq 2$, and thus for a choice assignment $\mathbf{y}$ that corresponds to $S'$, none of the polynomials in the sum evaluates to $1$, so $p(\mathbf{y})\equiv_20$.
Therefore, $p$ satisfies the desired properties.
This completes the proof.
\end{proof}

Therefore, combining \cref{thm:lower-bound}, \cref{thm:kernelization-by-polynomials}, \cref{obs:cycles-c-d}, and \cref{lem:cycles-poly}, we obtain the following.
 
\begin{theorem}\label{thm:cycles}
Let $\ell\geq 5$.
Then \listcoloring{C_{\ell}} parameterized by the size $k$ of the minimum vertex cover of the input graph admits a kernel with $\Oh(k^{d^*(C_\ell)})$ vertices and edges, but does not admit a kernel with $\Oh(k^{d^*(C_\ell)-\eps})$ vertices and edges, for any $\eps>0$, unless \containment.
\end{theorem}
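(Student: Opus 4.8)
The plan is to prove both parts by combining the general upper and lower bounds with the cycle-specific polynomial construction, splitting on whether $\ell = 6$. As a preliminary remark, \cref{obs:cycles-c-d} gives $d^*(C_\ell) = 2$ for all $\ell \geq 5$, with $c^*(C_\ell) = d^*(C_\ell)$ exactly when $\ell \neq 6$ and $c^*(C_6) = d^*(C_6) + 1 = 3$; moreover $C_\ell$ is non-bi-arc for $\ell \geq 5$ by \cref{thm:bi-arc}~(1), so all of \cref{thm:lower-bound}, \cref{thm:marking}, and \cref{thm:kernelization-by-polynomials} apply. The lower bound half is uniform and immediate: \cref{thm:lower-bound} already states that \listcoloring{C_\ell} has no kernel with $\Oh(k^{d^*(C_\ell)-\eps})$ vertices and edges for any $\eps>0$ unless \containment. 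So the work is entirely in producing, for each $\ell$, a kernel with $\Oh(k^{d^*(C_\ell)}) = \Oh(k^2)$ vertices and edges.

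For $\ell \neq 6$ there is nothing to do beyond quoting \cref{thm:marking}, which gives a kernel with $\Oh(k^{c^*(C_\ell)})$ vertices and edges; since here $c^*(C_\ell) = d^*(C_\ell)$, this is already the desired bound. The remaining and only substantive case is $\ell = 6$, where \cref{thm:marking} gives merely $\Oh(k^{c^*(C_6)}) = \Oh(k^3)$, one factor of $k$ too many, so I would instead go through the polynomial method of \cref{thm:kernelization-by-polynomials} with degree $d = 2$. To invoke that theorem one must verify its hypothesis for $H = C_6$: for every $r \leq c^*(C_6) = 3$, every product $F = L_1 \times \cdots \times L_r$ of incomparable subsets of $V(C_6)$, every $L \subseteq V(C_6)$, and every $S \in F$ lacking a common neighbor in $L$, the tuple $S$ can be forbidden by a polynomial of degree at most $2$ with respect to $(F,L)$. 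This is exactly the content of \cref{lem:cycles-poly}. Feeding $d = 2$ into \cref{thm:kernelization-by-polynomials} then yields a kernel with $\Oh(k^2 + k^2) = \Oh(k^2) = \Oh(k^{d^*(C_6)})$ vertices and edges, and together with the uniform lower bound this completes the theorem.

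The one genuinely nontrivial step, \cref{lem:cycles-poly}, is already in hand, so I do not expect a real obstacle — but it is worth recalling where its force lies. By the analysis in the proof of \cref{obs:cycles-c-d}, a minimal subset of $V(C_6)$ of size $3$ with no common neighbor in some list must be one of the two bipartition classes, $\{0,2,4\}$ or $\{1,3,5\}$; by symmetry one treats $\{0,2,4\}$, and forbidding this set on variables $v_1,v_2,v_3$ is achieved over GF$(2)$ by summing the three degree-$2$ polynomials from \cref{lemma:poly-local}, one per pair of colors in $\{0,2,4\}$, each detecting that both colors of the pair are used exactly once among $v_1,v_2,v_3$. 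An assignment realising all three colors makes each summand $1$, hence the sum $1$; an assignment whose color set has a common neighbor necessarily has size at most $2$ (as $\Delta(C_6) = 2$), so it makes every summand $0$. Arity-$\leq 2$ tuples are forbidden directly by the monomial $\prod_i y_{v_i,s_i}$, which has degree at most $2$. With this lemma the rest of the proof is pure assembly of the cited statements.
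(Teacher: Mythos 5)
Your proposal is correct and follows essentially the same route as the paper: the lower bound is a direct application of \cref{thm:lower-bound} (valid since $C_\ell$ is non-bi-arc for $\ell \geq 5$), and the upper bound is split into the $\ell \neq 6$ case, where $c^*(C_\ell) = d^*(C_\ell) = 2$ makes \cref{thm:marking} already tight, and the $\ell = 6$ case, where \cref{lem:cycles-poly} supplies the degree-$2$ polynomials needed to invoke \cref{thm:kernelization-by-polynomials}. The paper's one-line derivation cites \cref{thm:kernelization-by-polynomials}, \cref{obs:cycles-c-d}, and \cref{lem:cycles-poly} without explicitly naming \cref{thm:marking} for the $\ell \neq 6$ case, but that is purely cosmetic; your version is, if anything, slightly more explicit in its case split.
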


Now we proceed to larger values of $p$.
Let us first analyze, how the parameter $c^*$ behaves for such graphs.

\begin{lemma}\label{lem:forbidden-sets}
Let $k \geq 7$, $p \geq 2$ be integers such that $6p < k$. Then $\cc(C_k^p) = p+1$.

\noindent
Furthermore, if $L \subseteq \llbracket k \rrbracket$ and $S \subseteq \llbracket k \rrbracket$ are such that $|S|=p+1$ and $S$ is a minimal set with no common neighbor in $L$,
then either
\begin{enumerate}[(a)]
\item $S = \{i,i+1,\ldots,i+p\}$ for some $i \in \llbracket k \rrbracket$, or \label{case:consecutive}
\item $S = \{i,i+2,\ldots,i+p,i+p+2\}$ for some $i \in \llbracket k \rrbracket$. \label{case:nonconsecutive}
\end{enumerate}
\end{lemma}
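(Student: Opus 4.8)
The plan is to translate everything into arc combinatorics on $\llbracket k\rrbracket$, with the vertices laid out along the frame. First I would record the basic facts. Each neighbourhood $N(v)$ is the arc $\{v-p,\dots,v+p\}$ with $v$ deleted, so a vertex $w$ is a common neighbour of a set $S\subseteq\llbracket k\rrbracket$ exactly when $w\notin S$ and $S\subseteq\{w-p,\dots,w+p\}$. Write $\sigma(S)$ for the length of the shortest arc containing $S$ and $[s_{\min},s_{\max}]$ for that arc. The hypothesis $6p<k$ makes this shortest arc unique whenever $\sigma(S)\le 2p$, and a short computation (again using $6p<k$ to rule out wrap-around) shows that in this case $\bigcap_{s\in S}\{w:\dist(w,s)\le p\}$ equals the \emph{central arc} $I_S:=[s_{\max}-p,\;s_{\min}+p]$, which has $2p+1-\sigma(S)$ vertices; if $\sigma(S)>2p$ this intersection is empty. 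Consequently the common neighbours of $S$ are precisely the vertices of $I_S\setminus S$. Two facts I would single out: $\sigma(S)\ge|S|-1$ always, and any set containing $p+1$ consecutive vertices $\{a,\dots,a+p\}$ has no common neighbour at all — such a common neighbour would have to lie in $[a,a+p]$, hence in the set itself.

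Next I would prove the key claim: if $S$ is a minimal set with no common neighbour in some $L$ and $|S|\ge 3$, then $\sigma(S)\le 2p$. Suppose not. By minimality each $(|S|-1)$-subset of $S$ has a common neighbour in $L$, hence in $V(H)$, hence has span at most $2p$. List $S$ cyclically as $s_1,\dots,s_m$ with gaps $g_1,\dots,g_m$ (so $\sum_i g_i=k$ and $\sigma(S)=k-\max_i g_i$); deleting $s_j$ merges the two gaps flanking it into $g_{j-1}+g_j$. Since $\sigma(S)>2p$ makes every $g_i<k-2p$, while $S\setminus\{s_j\}$ has span $\le 2p$, the merged gap satisfies $g_{j-1}+g_j\ge k-2p$ for every $j$; summing, $2k=\sum_j(g_{j-1}+g_j)\ge m(k-2p)$, and as $k>6p$ gives $k-2p>\tfrac23k$, we obtain $m<3$, a contradiction. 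The lower bound $\cc(C_k^p)\ge p+1$ then follows at once: the arc $\{i,\dots,i+p\}$ together with $L=V(H)$ has no common neighbour by the previous paragraph, deleting an interior vertex $i+j$ exposes $i+j$ as a common neighbour, and deleting an endpoint leaves $p$ consecutive vertices, which have common neighbours at both ends — so this arc is a minimal bad set of size $p+1$.

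For the upper bound and the structure, let $S$ be minimal with no common neighbour in $L$. If $|S|\le 2$ there is nothing to do ($|S|\le p+1$ since $p\ge 2$, and no characterization is needed), so let $m:=|S|\ge 3$; by the claim $\sigma:=\sigma(S)\le 2p$. Put $[a,b]=[s_{\min},s_{\max}]$ (so $a,b\in S$) and $I=I_S=[b-p,a+p]$, $|I|=2p+1-\sigma$. Since $S$ has no common neighbour in $L$, $I\cap L\subseteq S$. For each $s\in S\setminus\{a,b\}$ the set $S\setminus\{s\}$ has the same extremes $a,b$, hence the same central arc $I$, and it has a common neighbour in $L$, i.e.\ $(I\cap L)\setminus(S\setminus\{s\})\neq\emptyset$; this set is contained in $S\setminus(S\setminus\{s\})=\{s\}$ because $I\cap L\subseteq S$, so it equals $\{s\}$ and $s\in I$. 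Hence $S\setminus\{a,b\}\subseteq I$, giving $m-2\le 2p+1-\sigma$, which together with $\sigma\ge m-1$ forces $m\le p+2$. If $m=p+2$ both inequalities are tight, so $\sigma=p+1$ and $S=\{a,\dots,a+p+1\}$ is an arc of $p+2$ vertices; but then $\{a,\dots,a+p\}\subsetneq S$ is a bad set, contradicting minimality. Thus $m\le p+1$, and with the lower bound $\cc(C_k^p)=p+1$. For $m=p+1$ we have $\sigma\in\{p,p+1,p+2\}$: if $\sigma=p$ then $S=\{a,\dots,a+p\}$ (possibility (a)); if $\sigma=p+2$ then $|I|=p-1=m-2$, so $S\setminus\{a,b\}=I=\{a+2,\dots,a+p\}$ and $S=\{a,a+2,a+3,\dots,a+p,a+p+2\}$ (possibility (b)).

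The delicate step is ruling out $\sigma=p+1$ when $m=p+1$. There $S=\{a,\dots,a+p+1\}\setminus\{t\}$ for a single $t$ with $a<t<a+p+1$, and since $I_S=\{a+1,\dots,a+p\}$ one checks that $t$ is the \emph{unique} common neighbour of $S$ in $V(H)$; as $S$ has no common neighbour in $L$, necessarily $t\notin L$. The key observation is that deleting the appropriate endpoint — $a$ if $t\neq a+1$, and $a+p+1$ if $t=a+1$ — produces a $p$-element subset of $S$ whose unique common neighbour in $V(H)$ is again $t$, hence a proper subset with no common neighbour in $L$, contradicting minimality of $S$. I would verify this by the same central-arc bookkeeping: e.g.\ for $t\neq a+1$, $S\setminus\{a\}=\{a+1,\dots,a+p+1\}\setminus\{t\}$ has extremes $a+1$ and $a+p+1$, central arc $\{a+1,\dots,a+p+1\}$, hence common neighbours exactly $\{t\}$. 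The main obstacle is keeping this arc arithmetic honest — the uniqueness of the shortest arc and the validity of the central-arc formula — which is exactly where $6p<k$ is used; the rest is the bookkeeping sketched above.
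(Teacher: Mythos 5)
Your proof is correct and follows the same high-level strategy as the paper's: bound the span of $S$ by $2p$ using minimality, then argue that the elements of $S$ other than the two extremes must lie in the ``central'' arc of common neighbours of the extremes, which forces $|S|\le p+2$, and finally rule out $|S|=p+2$ and the middle span value when $|S|=p+1$. The differences are mainly in the bookkeeping rather than the ideas: (i) you package the argument via the central arc $I_S=[s_{\max}-p,s_{\min}+p]$ together with the observation $I_S\cap L\subseteq S$, where the paper instead argues directly that $S\subseteq\{0,v\}\cup(N(0)\cap N(v))$ for an extremal pair $0,v$; (ii) your gap-sum double counting to show $\sigma(S)\le 2p$ when $|S|\ge 3$ replaces the paper's observation that if the extremal pair is at distance $>2p$ then that pair alone has no common neighbour, so minimality collapses $S$ to two vertices; and (iii) your endpoint-deletion dichotomy for ruling out $\sigma=p+1$ (delete $a$ if $t\ne a+1$, else delete $a+p+1$, in both cases leaving $t$ as the unique common neighbour while $t\notin L$) is a tidier route than the paper's two-stage contradiction pinning down $u=1$ and then analysing $S\setminus\{p+1\}$. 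All the arc arithmetic checks out, and you correctly identify $6p<k$ as the hypothesis that guarantees the shortest arc is unique and the central-arc formula is exact.
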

\begin{proof}
Let $C^p_k$, $S$, and $L$ be as in the statement of the lemma.

First, observe that $\cc(C^p_k) \geq p+1$. Indeed, it is enough consider $L=\llbracket k \rrbracket$ and the sets $S_1 = \{0,1,\ldots,p\}$ 
or $S_2 = \{0,2,\ldots,p,p+2\}$.
Since $k > 6p$, we observe that $S_1$ has no common neighbor, but for every $u \in S_1$, the vertex $u$ is a common neighbor of $S_1 \setminus \{u\}$.
Similarly, $S_2$ has no common neighbor, but for every $u \in S_2$, the set $S_2 \setminus \{u\}$ has a common neighbor $w$.
Indeed, if $u = 0$, then $w = p+1$, if $u = p+1$, then $w=1$, and in all other cases $w = u$.
Note that these two examples correspond to cases~\eqref{case:consecutive} and~\eqref{case:nonconsecutive} in the second part of the statement of the lemma.

Now let us show that $\cc(C^p_k) \leq p+1$.
Let $L \subseteq \llbracket k \rrbracket$ and $S \subseteq \llbracket k \rrbracket$ are such that $S$ is a minimal set with no common neighbor in $L$,
and $S$ is maximum possible. By the previous paragraph we can assume that $|S| \geq p+1 \geq 3$.

First, consider the case that $S$ contains $p+1$ consecutive vertices of the frame, by symmetry, we can assume that $\{0,1,\ldots,p\} \subseteq S$.
Suppose that there is some $u \in S \setminus \{0,1,\ldots,p\}$.
However, the subset $\{0,1,\ldots,p\}$ of $S$ has no common neighbor in $\llbracket k \rrbracket$ and thus in $L$, which contradicts the minimality of $S$.
Therefore, $S = \{0,1,\ldots,p\}$ and we obtain the case~\eqref{case:consecutive} in the second statement of the lemma.

So since now we can assume that $S$ does not contain $p+1$ consecutive vertices of the frame.
Let $u,v$ be the vertices of $S$, for which $\dist(u,v)$ is maximized. By symmetry of $C^p_k$ we can assume that $u=0$ and $0 < v \leq (k-1)/2$, as $|S| \geq 2$.

First suppose that $\dist(0,v) = v > 2p$.
Then we observe that $0$ and $v$ have no common neighbors in $\llbracket k \rrbracket$ (and thus in $L$),
so, by the minimality of $S$,
we obtain that $S = \{0,v\}$ and so $|S| = 2 <3$, a contradiction.
So from now on assume that $\dist(0,v) = v \leq 2p$.

Since $k \geq 6p$,  we claim that for all $w \in S$, it holds that $\dist(0,w) + \dist(w,v) = \dist(0,v) = v$.
In other words, $w$ lies on the shortest $0$-$v$-path in the frame of $C^p_k$, i.e., $0 \leq w \leq v$.
Indeed, if this is not the case, then either $\dist(0,w) > \dist(0,v)$ or $\dist(v,w) > \dist(0,v)$, which contradicts our choice of $u=0$ and $v$.

Now let us observe that that $S \subseteq \{0,v\} \cup (N(0) \cap N(v))$.
Indeed, suppose that there is $z\in S$ such that $z \notin \{0,v\} \cup (N(0) \cap N(v))$ and consider the set $S' := S\setminus\{z\}$.
By the minimality of $S$, the set $S'$ has a common neighbor $w\in L$.
Note that $w\neq z$ as $z\notin N(0)\cap N(v)$ and $w$ must be in particular a common neighbor of $0$ and $v$.
Furthermore, as for every common neighbor of $0$ and $v$, it must hold that $\dist(0,w)\leq p$ and $\dist(w,v)\leq p$.
Since we also have $0<z<v$ as $z\in S$, then $\dist(z,w)\leq p$, so $z \in N(w)$.
Therefore, $w$ is also a common neighbor of $S$, a contradiction.

Let us estimate the size of the set $|S|$.
Recall that $S$ contains only the vertices $w$, such that $0 \leq w \leq v$, and are common neighbors of $0$ and $v$ or are equal to $0$ or $v$.
Observe that $0$ has precisely $p$ neighbors $w$, such that $w > 0$,
and similarly $v$ has precisely $p$ neighbors $w$, such that $w < v$.
Thus $|S| \leq p+2$.

If $|S| = p+2$, then $S$ consists of $0,v$ and precisely $p$ vertices $w$ that have to satisfy $0<w\leq p$ and $v-p\leq w<v$.
This is only possible for $v = p+1$ and $S = \{0,1,\ldots,p+1\}$.
But then $S$ contains $p+1$ consecutive vertices of the frame, a contradiction.

So now assume that $|S| = p+1$, i.e., $S$ contains $0,v$ and $p-1$ vertices $w$ that satisfy $0<w\leq p$ and $v-p\leq w<v$.
As there have to be $p-1$ vertices $w$ satisfying $v-p\leq w\leq p$, it must hold that $v\leq p+2$.
On the other hand, since we consider the case that $S$ does not contain $p+1$ consecutive vertices of the frame, we observe that $v\geq p+1$, and thus $v\in \{p+1,p+2\}$.

Suppose first that $v=p+1$.
Since $|S|=p+1$, there is precisely one $0<u<p+1$ such that $u\notin S$.
Note that $u\notin L$, as otherwise $S$, which contains only vertices $w$ such that $0\leq w\leq p+1$, has a common neighbor $u\in L$, a contradiction.
Now consider the set $S'=S\setminus \{0\}$.
By the minimality of $S$, the set $S'$ has a common neighbor $w\in L$.
Note that $w\neq u$ and $w\notin S'$, and thus either $w<1$ or $w>v=p+1$.
We cannot have $w<1$, as then $w$ is non-adjacent to $v$, so we have $w>p+1$.
Then $1\notin S'$, as $1$ cannot be adjacent to $w$, and this is only possible if $u=1$.
Now let $S''=S\setminus \{p+1\}$.
Again, by the minimality of $S$, there is a common neighbor $z\in L$ of $S''$.
Since $z\notin S''$ and $z\neq u$, we either have $z<0$ or $z>p$.
We cannot have $z>p$, as then $z$ is non-adjacent to $0\in S''$, and thus $z<0$.
But then $z$ is non-adjacent to $p\geq 2\neq u$, which is a contradiction with the fact that $z\in S''$ and $z$ being the common neighbor of $S''$.

So it remains to consider the case $v=p+2$.
As every vertex $w\in S\setminus \{0,v\}$ has to be adjacent to both $0$ and $v$, all such vertices have to satisfy $v-p\leq w\leq p$, which for $v=p+2$ is $2\leq w \leq p$.
Since $|S|=p+1$, the only possiblity is that $S=\{0,2,3,\ldots,p,p+2\}$.
This gives case~\eqref{case:nonconsecutive} in the second statement of the lemma, which completes the proof.
\end{proof}

 
Now we will show how to forbid the sets of types~\eqref{case:consecutive} and~\eqref{case:nonconsecutive} by low-degree polynomials.

\begin{lemma}\label{lem:powers-poly}
Let $p \geq 2$, $k > 6p$, $L_i \subseteq V(C_k^p)$ be an incomparable set for $i\in [p+1]$, and let $S\in F=L_1 \times \ldots \times L_{p+1}$ be minimal with no common neighbor in $L\subseteq V(C_k^p)$. Then $S$ can be forbidden by a polynomial of degree at most $p$, with respect to $(F,L)$.
\end{lemma}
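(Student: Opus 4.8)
The plan is to reduce, via \cref{lem:forbidden-sets}, to the case where the colour set of $S$ is of type~\eqref{case:consecutive} or~\eqref{case:nonconsecutive}, and then to build the forbidding polynomial over $\mathbb{Z}_2$ as a sum of the degree‑$p$ ``used exactly once'' polynomials supplied by \cref{lemma:poly-local}. First I would dispose of the easy subcase: if the underlying set of the tuple $S=(s_1,\dots,s_{p+1})$ has fewer than $p+1$ elements, take one index $\ell$ per distinct colour and set $p_S:=\prod_\ell y_{v_\ell,s_\ell}$; this has degree at most $p$, equals $1$ on the assignment $v_\ell\mapsto s_\ell$, and equals $0$ on every $S'$ whose colour set contains that of $S$ — in particular on every $S'$ having a common neighbour in $L$, since a superset of a set with no common neighbour in $L$ has none either. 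So from now on $S$ has $p+1$ pairwise distinct coordinates, and by \cref{lem:forbidden-sets} its colour set is, after applying a rotation of $C_k^p$, either $\{0,1,\dots,p\}$ or $\{0,2,3,\dots,p,p+2\}$. (Note that for $k>6p$ every two distinct vertices of $C_k^p$ are incomparable, so the incomparability hypothesis on the $L_i$ and on $L$ is vacuous here and the polynomial will in fact not depend on $F$ or $L$; the ``hardest'' instance is $L=V(C_k^p)$ with $F$ the full product.)

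For a $p$‑element set $T\subseteq\llbracket k\rrbracket$ let $q_T$ be the degree‑$p$ polynomial of \cref{lemma:poly-local} (with $r=p+1$ and the designated vertices $v_1,\dots,v_{p+1}$) that evaluates to $1$ on a choice assignment iff every vertex of $T$ is used exactly once among $v_1,\dots,v_{p+1}$. I would choose a family $\mathcal T$ of $p$‑element subsets of $\llbracket k\rrbracket$ — whose natural candidate is the collection of all $p$‑vertex intervals of the frame, adjusted by a small type‑dependent ``correction'' set of almost‑interval $p$‑subsets of $S$ — and set $p_S:=\sum_{T\in\mathcal T}q_T$ over $\mathbb{Z}_2$, which has degree at most $p$. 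Since $q_T$ vanishes as soon as some colour is repeated among $v_1,\dots,v_{p+1}$ (if the repeated colour lies in $T$ it is not used exactly once; if it lies outside $T$, two of the $p+1$ slots are consumed and the $p$ colours of $T$ cannot each appear once), the same holds for $p_S$; hence it suffices to analyse $p_S$ on choice assignments in which $v_1,\dots,v_{p+1}$ receive $p+1$ distinct colours forming a set $D$, where $q_T(\vect y)=1$ iff $T\subseteq D$, so that $p_S(\vect y)\equiv|\mathcal T\cap\delta D|\pmod 2$.

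It then remains to verify two parity statements: (i) $|\mathcal T\cap\delta S|$ is odd, which gives $p_S\neq 0$ on the tuple $S$; and (ii) $|\mathcal T\cap\delta D|$ is even for every $(p+1)$‑element set $D$ having a common neighbour in $L$, which gives $p_S=0$ on every such $S'$. For (ii) the geometric input is that such a $D$ is contained in $N(w)=\{w-p,\dots,w+p\}\setminus\{w\}$ for some $w\in L$, and that (as $k>6p$) the only $p$‑vertex intervals of the frame contained in $N(w)$ are its two ``halves'' $\{w-p,\dots,w-1\}$ and $\{w+1,\dots,w+p\}$, of which $D$ can contain at most one; consequently $D$ contains at most one interval of $p$ consecutive frame vertices, so the interval part of $\mathcal T$ contributes $0$ or $1$ to $|\mathcal T\cap\delta D|$. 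One then checks, separately for types~\eqref{case:consecutive} and~\eqref{case:nonconsecutive}, that the correction part of $\mathcal T$ is chosen so that its contribution always has the same parity as the interval contribution (making $|\mathcal T\cap\delta D|$ even), while for $D=S$ the two contributions combine to an odd total (here $S$ itself contains $p+1$ consecutive frame vertices in case~\eqref{case:consecutive}, hence two interval $p$‑subsets, so the correction must have odd intersection with $\delta S$).

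I expect the last parity bookkeeping to be the main obstacle. The degree budget is extremely tight: the monomial $\prod_{\ell=1}^{p+1}y_{v_\ell,s_\ell}$ detecting only the tuple $S$ already has degree $p+1$, so working over $\mathbb{Z}_2$ and forcing cancellations is unavoidable. The delicate cases are precisely the $(p+1)$‑element sets $D$ that possess a common neighbour in $L$ yet agree with the colour set of $S$ in exactly $p$ positions, for which $|\delta S\cap\delta D|=1$; the correction set must be designed so that its number of $p$‑subsets contained in each such $D$ is odd, and this has to hold simultaneously for all such $D$ and in both structural cases for $S$. This is where the fine structure of neighbourhoods in $C_k^p$ — and, in particular, the difference between the consecutive pattern and the ``gapped'' pattern — genuinely enters; a companion computation (analogous to the one establishing $d^*(C_k^p)=p$) would also be needed to be sure no degree‑$p$ obstruction survives.
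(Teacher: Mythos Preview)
Your general framework matches the paper's: reduce via \cref{lem:forbidden-sets} to the two shapes, write the forbidding polynomial as a $\mathbb{Z}_2$-sum $p_S=\sum_{T\in\mathcal T}q_T$ of the degree-$p$ polynomials from \cref{lemma:poly-local}, and reduce correctness to the parity of $|\mathcal T\cap\delta D|$. That part is fine.

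The genuine gap is the choice of the family $\mathcal T$. Your proposed family---all $p$-vertex intervals of the frame, plus a correction drawn from $p$-subsets of $S$---cannot work. Take $p=2$, $S=\{0,1,2\}$ (type~\eqref{case:consecutive}) in $C_k^2$ with $k>12$, and $D=\{3,4,6\}\subseteq N_{C_k^2}(5)$. Then $\delta D=\{\{3,4\},\{3,6\},\{4,6\}\}$ contains exactly one $2$-interval, namely $\{3,4\}$, and no subset of $S$ whatsoever; so the interval part contributes $1$ and any correction confined to subsets of $S$ contributes $0$, giving $|\mathcal T\cap\delta D|$ odd. The same obstruction arises for every $p\ge2$: any $(p+1)$-set $D\subseteq N(w)$ that contains one half $\{w+1,\ldots,w+p\}$ of $N(w)$ but is disjoint from $S$ yields an odd interval count with zero correction. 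So the ``correction from $\delta S$'' idea fails for reasons that are structural, not bookkeeping: the family has to be global, not a local patch near $S$.

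The paper's construction is quite different from intervals. It takes $\mathcal T=\{A_{i,j}:(i,j)\in I\}$ where $A_{i,j}=\{i\}\cup\{j,j+1,\ldots,j+p-2\}$ is a single point together with a run of $p-1$ consecutive frame vertices, and $I$ restricts $(i,j)$ by $2\le\dist(i,j)\le 2p$ with $j$ lying on the far side of $i$. For $p=2$ this is exactly the set of all pairs at cyclic distance $2$, $3$ or $4$ (not distance $1$!), and one checks directly that it gives the right parities; for $p\ge3$ a single term $A_{i,i+2}=\{i,i+2,\ldots,i+p\}$ survives on both shapes of $S$, and for every $D$ with a common neighbour either no term survives or two do. The case analysis is short but not the one you outlined; you should replace the interval ansatz by this ``point plus short run'' family.
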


\begin{proof}
Let $S = (s_1,\ldots,s_{p+1})$. By minimality of $S$, it holds that the set $\{s_1,\ldots,s_{p+1}\}$ has $p+1$ distinct elements. We will construct a polynomial with a stronger property than required, i.e., in assignment corresponding to any coloring of vertices $v_1,\ldots,v_{p+1}$, if the set $\{s_1,\ldots,s_{p+1}\}$ occurs, then the polynomial equals to $1$ (modulo $2$), and if a set with a common neighbor in $C^p_k$ occurs, then the polynomial equals to $0$ (modulo $2$). Since we focus on sets rather than sequences, we will treat $S$ as the set $\{s_1,\ldots, s_{p+1}\}$.  

Recall that we use variables $y_{v_i,c}$ for $i \in [p+1], c \in \llbracket k \rrbracket$ representing that vertex $i$ has color $c$.

For $i,j\in \llbracket k \rrbracket$, we set $A_{i,j}:=\{i,j,j+1,\ldots,j+p-2\}$.
Let us also define $I\subseteq \llbracket k \rrbracket^2$ to be the set of of these pairs $(i,j)$ such that: (i)  $2 \leq \dist(i,j) \leq 2p$, and (ii) $\dist(i,j) < \dist(i,j+1)$.
For every $(i,j)\in I$, we call \cref{lemma:poly-local} to construct the polynomial $p_{A_{i,j}}$, which evaluates to $1$ (modulo $2$) if each of the colors in $A_{i,j}$ is used exactly once by choice assignment $\mathbf{y}$ (implying also that one color not from this set is used), and evaluates to $0$ otherwise.
 
 
We construct the polynomial $p$ as follows:
\begin{align*}
f(\mathbf{y}) = \sum_{(i,j) \in I} p_{A_{i,j}}(\mathbf{y}).
\end{align*}
 
We will prove that for a choice assignment $\mathbf{y}$ corresponding to a coloring of $v_1,\ldots,v_{p+1}$ such that the set $S$ occurs, we have $f(\mathbf{y})\equiv_21$, and for assignment corresponding to coloring such that a set $S'$ with a common neighbor occurs, we have $f(\mathbf{y})\equiv_20$. 

\paragraph{Case 1: $p=2$.} By \cref{lem:forbidden-sets}, the set $S$ is of one of the following types: $\{i,i+1,i+2\}$ or $\{i,i+2,i+4\}$.
Furthermore, we have that $I=\{(i,i+2), (i,i+3), (i,i+4) \ | \ i\in \llbracket k \rrbracket\}$, and for $i\in \llbracket k \rrbracket$, we have $A_{i,i+2}=\{i,i+2\}$, $A_{i,i+3}=\{i,i+3\}$, and $A_{i,i+4}=\{i,i+4\}$.
So for an assignment $\mathbf{y}$ corresponding to $S$ of type $\{i,i+1,i+2\}$, the only polynomial in the sum evaluating to $1$ is $p_{A_{i,i+2}}$, and for $S$ of type $\{i,i+2,i+4\}$, exactly three polynomials evaluate to $1$: $p_{A_{i,i+2}}$, $p_{A_{i,i+4}}$, and $p_{A_{i+2,i+4}}$.
In both cases we obtain that $f(\mathbf{y})\equiv_21$.

It remains to show that if a set $S'$ with a common neighbor occurs, then $f(\mathbf{y})\equiv_20$. If $|S'|\leq 2$, then no polynomial in the sum evaluates to $1$, thus $f(\mathbf{y})\equiv_20$. And if $|S'|=3$, then it can be verified that $S'$ is of one of the following types: (i) $\{i,i+1,i+4\}$, (ii) $\{i,i+3,i+4\}$, (iii) $\{i,i+1,i+3\}$, and (iv) $\{i,i+2,i+3\}$ -- note that since $S'$ has a common neighbor, for any vertices $uv\in S'$ it must hold $\dist(u,v)\leq4$. In every case there are precisely two polynomials in the sum evaluating to $1$, respectively: (i) $p_{A_{i,i+4}}$ and $p_{A_{i+1,i+4}}$, (ii) $p_{A_{i,i+3}}$ and $p_{A_{i,i+4}}$, (iii) $p_{A_{i,i+3}}$ and $p_{A_{i+1,i+3}}$, (iv) $p_{A_{i,i+2}}$ and $p_{A_{i,i+3}}$. Therefore, in each case we obtain $f(\mathbf{y})\equiv_20$.
  
\paragraph{Case 2: $p\geq 3$.} So since now assume that $p\geq 3$ and let us prove that for choice assignment $\mathbf{y}$ corresponding to a coloring of $v_1,\ldots,v_{p+1}$ such that the set $S$ occurs, we have $f(\mathbf{y})\equiv_21$.
As $S$ is a minimal set without a common neighbor in $L$ of size $p+1$, then by \cref{lem:forbidden-sets}, the set $S$ is of one of the following types: $\{i,i+1,\ldots,i+p\}$ or $\{i,i+2,\ldots,i+p,i+p+2\}$.
In both cases, we have $p_{A_{i,i+2}}(\mathbf{y})\equiv_21$, (recall that $A_{i,i+2}=\{i,i+2,\ldots,i+p\}$) and it can be verified, that this is the only polynomial in the sum that evaluates to $1$, so $f(\mathbf{y})\equiv_21$.
 
It remains to show that if a set $S'$ has a common neighbor in $C_k^p$, then it is not forbidden by $f$.
Let $\mathbf{y}$ be a choice assignment using the colors in $S'$.
If $|S'| \leq p$, observe that all terms of $p$ are always zero as some color occurs twice on a set of $p + 1$ vertices by the pigeon hole principle, and thus $f(\mathbf{y})\equiv_20$.
Furthermore, if $S'$ does not contain $p-1$ consecutive vertices, then every polynomial in the sum equals $0$, and again $f(\mathbf{y})\equiv_20$ as required.

So since now we assume that $|S'| = p+1$ and $S'$ contains $p-1$ consecutive vertices.
By symmetry, we can assume that $S'$ contains $p+1,\ldots,2p-1$.
Since $|S'|=p+1$, then there are two more vertices $u,v$ in $S'$. 

Assume first that there are $p$ consecutive vertices in $S'$, again by symmetry we can assume that $u=2p$, i.e., $S'$ contains the vertices $p+1,\ldots,2p$.
Note that the only common neighbors of $\{p+1,\ldots,2p\}$ are $p$ and $2p+1$.
Therefore $v$ must be adjacent to $p$ or $2p+1$, so either $0\leq v<p$ or $2p+1<v\leq 3p+1$.
If the first case applies, then the only polynomials evaluating to $1$ are: $p_{A_{v,p+1}}$ and $p_{A_{v,p+2}}$, i.e., the polynomials for the sets $\{v,p+1,p+2,\ldots,2p-1\}$ and $\{v,p+2,p+3,\ldots,2p\}$, so $f(\mathbf{y})\equiv_20$ as required.
If the other case applies, then no polynomial evaluates to $1$, so again $f(\mathbf{y})\equiv_20$ as required.

Finally, assume that there are no $p$ consecutive vertices in $S'$, so $u,v \notin \{p,2p\}$.
Now observe that the only common neighbors of $\{p+1,\ldots,2p-1\}$ are $p-1$, $p$, $2p$, and $2p+1$.
Therefore $u,v$ must be both adjacent either to one of $\{2p,2p+1\}$ or to one of $\{p-1,p\}$.
If the first case applies, then we have that $u,v>2p$.
If $p>3$, then this means that no polynomial in the sum evaluates to $1$, so $f(\mathbf{y})\equiv_20$ as required.
If $p=3$, the only case when there is some polynomial that does not evaluate to $0$ is when the vertices $u,v$ are consecutive, say $u<v$.
In such a case there are exactly two polynomials evaluating to $1$: the ones introduced for the sets $A_{p+1,u}=\{p+1,u,v\}$ and $A_{p+2,u}=\{p+2,u,v\}$, so again $f(\mathbf{y})\equiv_20$.
In the other case we have $\dist(u,p-1)<\dist(u,p)\leq p+1$ and $\dist(v,p-1)<\dist(v,p)\leq p+1$, and there are exactly two polynomials that evaluate to $1$: the ones introduced for the sets $A_{u,p+1}=\{u,p+1,\ldots,2p-1\}$ and $A_{v,p+1}=\{v,p+1,\ldots,2p-1\}$, so again $f(\mathbf{y})\equiv_20$ as required.
This completes the proof.
\end{proof}
 
Let us also mention that for $p \geq 2$, $\ell > 6p$, the graph $C_{\ell}^p$ is non-bi-arc.
Therefore, combining \cref{thm:lower-bound}, \cref{thm:kernelization-by-polynomials}, \cref{lem:forbidden-sets}, and \cref{lem:powers-poly}, we obtain the following.

\begin{theorem}\label{thm:powers-cycles}
Let $p \geq 2$, $\ell > 6p$.
Then \listcoloring{C_{\ell}^p} parameterized by the size $k$ of the minimum vertex cover of the input graph admits a kernel with $\Oh(k^{p})$ vertices and edges, but does not admit a kernel with $\Oh(k^{p-\eps})$ vertices and edges, for any $\eps>0$, unless \containment.
\end{theorem}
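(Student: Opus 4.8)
The plan is to assemble results already in hand: the statement has an $\Oh(k^p)$ upper-bound half and an $\Oh(k^{p-\eps})$ lower-bound half, and each is obtained by feeding facts about $C_\ell^p$ into a generic theorem from earlier. First I would record that $C_\ell^p$ is not a bi-arc graph: since $p\geq 2$, the vertices $0,1,2$ are pairwise at cyclic distance at most $2\leq p$, so they form a triangle, $C_\ell^p$ is non-bipartite, and hence non-bi-arc by \cref{thm:bi-arc}~(1). For the lower bound, \cref{lem:forbidden-sets} gives $\cc(C_\ell^p)=p+1$ (its hypotheses $\ell>6p\geq 12\geq 7$ are met), so \cref{lem:c-and-d} yields $\dd(C_\ell^p)\geq \cc(C_\ell^p)-1=p$. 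Applying \cref{thm:lower-bound} to $H=C_\ell^p$ then rules out a kernel with $\Oh(k^{\dd(C_\ell^p)-\eps})$ vertices and edges for every $\eps>0$ unless \containment; since $\dd(C_\ell^p)\geq p$, a kernel with $\Oh(k^{p-\eps})$ vertices and edges would in particular be such a kernel, and is therefore also excluded.

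For the upper bound I would invoke \cref{thm:kernelization-by-polynomials} with $d=p$. Its hypothesis requires that for every $r\leq \cc(C_\ell^p)=p+1$, every $F=L_1\times\cdots\times L_r$ with incomparable $L_i$, every $L\subseteq V(C_\ell^p)$, and every $S=(s_1,\ldots,s_r)\in F$ without a common neighbor in $L$, the tuple $S$ can be forbidden by a polynomial of degree at most $p$ with respect to $(F,L)$. I would split into cases. If $r\leq p$, the monomial $\prod_{i=1}^r y_{v_i,s_i}$ has degree $r\leq p$ and forbids $S$: it evaluates to $1$ precisely when the realized colors on $v_1,\ldots,v_r$ equal $S$ (which has no common neighbor in $L$), hence it vanishes whenever the realized tuple has a common neighbor in $L$. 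If $r=p+1$ and $S$ is inclusion-wise minimal with no common neighbor in $L$, then \cref{lem:powers-poly} directly supplies a polynomial of degree at most $p$ forbidding $S$ with respect to $(F,L)$. If $r=p+1$ but $S$ is not minimal, some proper subsequence $(s_j)_{j\in J}$ with $|J|\leq p$ already has no common neighbor in $L$, and the monomial $\prod_{j\in J}y_{v_j,s_j}$ of degree $\leq p$ forbids $S$ by the same reasoning. Thus the hypothesis of \cref{thm:kernelization-by-polynomials} holds with $d=p$, and it outputs a kernel with $\Oh(k^p+k^2)$ vertices and edges; since $p\geq 2$ this is $\Oh(k^p)$.

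I do not expect a genuine obstacle, since all the real work is in \cref{lem:forbidden-sets} and \cref{lem:powers-poly}. The one point requiring care is that \cref{lem:powers-poly} covers only \emph{minimal} forbidden sets of size exactly $p+1$, so the non-minimal and smaller-arity cases must be handled separately by monomials --- precisely the reduction already used in the alternative proof of \cref{thm:marking} and in the proof of \cref{lem:cycles-poly}.
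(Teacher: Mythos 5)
Your proposal is correct and follows the paper's own (one-line) assembly exactly: non-bi-arc via the triangle $0,1,2$, then $\cc(C_\ell^p)=p{+}1$ from \cref{lem:forbidden-sets} feeding \cref{lem:c-and-d} and \cref{thm:lower-bound} for the lower bound, and \cref{lem:powers-poly} feeding \cref{thm:kernelization-by-polynomials} for the upper bound. The extra care you take in reducing the non-minimal and low-arity cases to monomials before applying \cref{lem:powers-poly} is exactly the step the paper leaves implicit (it is the same reduction used in the alternative proof of \cref{thm:marking} and in \cref{lem:cycles-poly}), so there is no gap.
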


\subsubsection{Small-degree graphs $H$}
\label{sec:bounded-degree}
Recall that if $d^*(H)=c^*(H)$, tight bounds are obtained by the combination of \cref{thm:marking} and \cref{thm:lower-bound}.
Thus from now on assume that $d^*(H)=c^*(H)-1$.

We now consider graphs where $c^*(H)$ is close to or equal to the
maximum degree $\Delta(H)$.
By \cref{lem:delta-c-plus1}, we have $\cc(H) \leq \Delta(H)+1$.
If $\cc(H) = \Delta(H)+1$, then $\dd(H) = \Delta(H)$, and tight bounds follow from combining \cref{thm:kernel-delta} due to Jansen and Pieterse~\cite{JansenP19Coloring} and \cref{thm:lower-bound}.

So from now on we focus on the case $\cc(H)=\Delta(H)$  and show that such graphs have a kernel with $\Oh(k^{\dd(H)})$ vertices and edges.
By \cref{thm:kernelization-by-polynomials},
it suffices to show the existence of appropriate polynomials of degree $d^*(H)$.

We begin by a useful structural property. 

\begin{lemma} \label{lemma:max-deg-property}
Let $H$ be a graph such that $d^*(H)+1=c^*(H)=\Delta(H)$.  
Let $S=N(v)$ for some $v \in V(H)$ with $\deg(v)=\Delta(H)$.
Then there exists some $u \in S$ such that $S\setminus\{u\} \subseteq N(v')$
for $v' \in V(H)$ implies $N(v') \subseteq N(v)$. 
\end{lemma}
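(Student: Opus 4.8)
The plan is to argue by contradiction: assuming the statement fails, I would manufacture a \betterlb of order $c^*(H)$ in $H$, which contradicts the hypothesis, since $d^*(H)=c^*(H)-1$ and $d^*(H)$ is by \cref{def:d} the largest order of such a structure.

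Set $c:=c^*(H)=\Delta(H)$ and $S=N(v)=\{s_1,\dots,s_c\}$. First I would spell out what it means for a candidate $u=s_i$ to fail: there is a vertex $v_i\in V(H)$ with $S\setminus\{s_i\}\subseteq N(v_i)$ but $N(v_i)\not\subseteq S$. Since $|N(v_i)|\le\Delta(H)=c$ while $|S\setminus\{s_i\}|=c-1$, this immediately forces $N(v_i)=(S\setminus\{s_i\})\cup\{w_i\}$ for some single vertex $w_i\notin S$; in particular $\deg(v_i)=c$, so each $v_i$ is again a maximum-degree vertex, and $s_i\notin N(v_i)$ (as $s_i\notin S\setminus\{s_i\}$ and $s_i\ne w_i$). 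If the lemma fails we obtain such vertices $v_1,\dots,v_c$ all at once. I would then record two routine observations: the $v_i$ are pairwise distinct, since for $i\ne j$ we have $s_i\in S\setminus\{s_j\}\subseteq N(v_j)$ but $s_i\notin N(v_i)$; and each $v_i$ is incomparable with $v$, because $w_i\in N(v_i)\setminus N(v)$ (as $w_i\notin S=N(v)$) and $s_i\in N(v)\setminus N(v_i)$.

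With these in hand I would verify that $L:=N(v)$, the distinct vertices $(x_1,\dots,x_c):=(v_1,\dots,v_c)$, and the vertices $(x_1',\dots,x_c'):=(v,\dots,v)$ form a \betterlb of order $c$. Incomparability of $x_i$ and $x_i'$ is the observation above. Next, $\bigcap_{i\in[c]}N(v_i)\cap N(v)=\emptyset$: if some $s_k$ lay in the intersection it would in particular lie in $N(v_k)$, which it does not. Finally, for any tuple $(y_i)_{i\in[c]}$ with $y_i\in\{v_i,v\}$ and $T:=\{i:y_i=v\}\ne\emptyset$, pick any $k\in T$; then $s_k$ is a common neighbor of all the $y_i$ inside $L$: for $i\in T$ we have $s_k\in N(v)=S$, for $i\notin T$ we have $i\ne k$ and hence $s_k\in S\setminus\{s_i\}\subseteq N(v_i)$, and $s_k\in S=L$. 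Thus $\bigcap_{i\in[c]}N(y_i)\cap L\ne\emptyset$, completing the verification, so $H$ admits a \betterlb of order $c=c^*(H)=d^*(H)+1$ — the desired contradiction.

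The main obstacle I expect is simply guessing the right structure. The tempting attempt is to take the colors $s_i$ as the $x_i$ and their "external replacements" $w_i$ as the $x_i'$ (with a list built from the $v_i$): single-coordinate replacements then work, but verifying the common-neighbor condition for replacing several coordinates simultaneously would require the various $w_i$ to coincide, which need not hold. The point that rescues the argument is to reverse the roles — use the auxiliary vertices $v_i$ as the "first" vertices (they are automatically maximum-degree, which is exactly what makes them incomparable with $v$), take every primed vertex equal to $v$, and take the list to be $N(v)$ itself; then every replacement is witnessed by a color $s_k$ whose index was replaced. A last, minor thing to keep an eye on is loops in $H$ (e.g. whether $v$ or some $v_i$ coincides with a vertex of $S$, and that the produced common neighbor lies in $S\cap L$, which the paper's convention explicitly permits); the distinctness and incomparability computations above are insensitive to this.
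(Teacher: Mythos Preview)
Your proof is correct and in fact slightly slicker than the paper's. Both arguments begin identically: assuming the lemma fails, for each $s_i\in S$ one obtains a vertex $v_i$ with $N(v_i)=(S\setminus\{s_i\})\cup\{w_i\}$ for some $w_i\notin S$. The paper then splits into two cases according to whether the set $\{v_1,\dots,v_c\}$ has a common neighbor: if it does (say $z$), it shows $S\cup\{z\}$ is a minimal set of size $\Delta(H)+1$ without a common neighbor, contradicting $c^*(H)=\Delta(H)$; if it does not, it builds the same \betterlb\ as you (with $x_i=v_i$, $x_i'=v$), but with $L=V(H)$. Your choice $L=N(v)$ renders the case split unnecessary, because any common neighbor $z$ of the $v_i$ automatically lies outside $S=N(v)=L$, so $\bigcap_i N(v_i)\cap L=\emptyset$ holds regardless. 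What the paper's two-case argument buys is a slightly more informative dichotomy (one case contradicts the value of $c^*$, the other the value of $d^*$), but your uniform argument is shorter and equally rigorous.
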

\begin{proof}
Assume to the contrary that for every $u \in S$ there exists a vertex $v_u \neq v \in V(H)$ such that $\deg(v_u)=\deg(v)$ and $N(v_u) = (S\setminus\{u\}) \cup u'$ for some $u' \in V(H)$, $u' \notin S$.
We split into two cases.
First, assume that $\{v_u \mid u \in S\}$ has a common neighbor $z$.
Note that $z\notin S$, as then for $u=v$, we have that $N(v_u)=S\setminus\{u\}\cup\{u'\}=S\setminus\{z\}\cup\{u'\}$, and thus $v_u$ cannot be adjacent to $z$.
Moreover, for every $u \in S$, since $\deg(v_u) \geq |S\setminus\{u\}\cup\{u'\}|=\Delta(H)$, it holds that $u'=z$.
Now consider the set $S'=S\cup\{z\}$.
It has no common neighbors since $|S'| > \Delta(H)$, but $S'\setminus\{z\}=S=N(v)$ has a common neighbor $v$, and for every $u \in S$, the set $S'\setminus\{u\}=S\setminus\{u\}\cup\{z\}$ has a common neighbor $v_u$.
Then $S'$ contradicts the claim that $c^*(H)=\Delta(H)$. 

Otherwise, the set $X=\{v_u \mid u \in S\}$ has no common neighbor.
Note that $|X|=|S|$, as all vertices $v_u$ must be distinct by the degree bound on $H$.
On the other hand, let $v_u'=v$ for every $v_u \in X$.
Then $X\setminus\{v_u\}\cup\{v_u'\}=X\setminus\{v_u\}\cup\{v\}$ has a common neighbor $u$, and this obviously remains true if we replace more vertices $v_i \in X$ by $v_i'=v$ (since the resulting set is then a strict subset of $X\setminus\{v_u\}\cup\{v\}$).
Finally, $v_u$ and $v$ are incomparable for every $u \in S$, since $u \in N(v) \setminus N(v_u)$ and $u' \in N(v_u) \setminus N(v)$.
Thus $X$ is a lower bound structure of size $|X|=\Delta(H)=c^*(H)>d^*(H)$ in $H$, a contradiction.
\end{proof}

Rephrased, let $\mathcal{S}=\{S \subseteq V(H): |S|=c^*(H), S \text{ has a common neighbor}\}$. 
Then for every $S \in \mathcal{S}$ there exists a set $S' \in \delta S$
such that $S' \subseteq S''$ for $S'' \in \mathcal{S}$ only for $S''=S$.
We will see that this guarantees the existence of an appropriate polynomial of degree $d^*(H)$.
We have the following.

\begin{lemma} \label{lem:bd-poly}
Let $H$ be a graph with $d^*(H)+1=c^*(H)=\Delta(H)$.
Let $L \subseteq V(H)$, $F = L_1 \times \ldots \times L_r$, and a sequence $(v_1, \ldots, v_r) \in X^r$ be given, where $L_i \subseteq V(H)$ is an incomparable set for each $i \in [r]$.
Let $S_0 \in F$ be a sequence which has no common neighbor in $L$.
Then $S_0$ can be forbidden on $(v_1,\ldots,v_r)$ by a polynomial of degree $d^*(H)$ with respect to $(F,L)$.
\end{lemma}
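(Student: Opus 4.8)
The plan is as follows. I would first reduce to the case where $S_0$ is inclusion-wise minimal among the subtuples of $(v_1,\dots,v_r)$ having no common neighbour in $L$: if some proper subtuple $S''$ of $S_0$, on vertices $(v_i)_{i\in I}$, already has no common neighbour in $L$, then any polynomial of degree at most $d^*(H)$ that forbids $S''$ on $(v_i)_{i\in I}$ with respect to the restricted pair also forbids $S_0$, since a common neighbour in $L$ of a tuple $S'$ is also a common neighbour of every subtuple of $S'$. After this reduction the entries of $S_0$ are pairwise distinct and $|S_0|\le c^*(H)$. If $|S_0|\le c^*(H)-1=d^*(H)$, then the monomial $\prod_i y_{v_i,s_i}$ has degree at most $d^*(H)$ and forbids $S_0$: it equals $1$ exactly on a choice assignment realising $S_0$, and since no tuple with a common neighbour in $L$ can equal $S_0$, both conditions of the definition hold. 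So from now on I assume $|S_0|=c^*(H)=\Delta(H)=:c$ and write $S_0=\{s_1,\dots,s_c\}$ on distinct vertices $v_1,\dots,v_c$; note $c\ge 3$ since $H$ is non-bi-arc and hence $d^*(H)\ge 2$ by \cref{obs:non-bi-arc-d}.

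The crux is the following claim: there is a set $A\in\delta S_0$ with $A\not\subseteq N(w)$ for every $w\in L$ with $\deg(w)=\Delta(H)$. Granting it, I would take $p$ to be the polynomial $p_A$ from \cref{lemma:poly-local}, used with $r=|A|+1=c$ and the vertices $v_1,\dots,v_c$; it has degree $|A|=c-1=d^*(H)$ and equals $1$ exactly when each colour of $A$ is used precisely once among $v_1,\dots,v_c$. On a choice assignment realising $S_0$, each colour of $A\subseteq S_0$ is used once, so $p_A=1$. On a choice assignment whose $c$ used colours have a common neighbour $w\in L$: if these colours are not pairwise distinct, then some colour of $A$ is used either twice or not at all (there being only $c-1$ of them among $c$ vertices), so $p_A=0$; if they are pairwise distinct, they form a $c$-element set $S'\subseteq N(w)$, and $|S'|=c=\Delta(H)$ forces $S'=N(w)$ with $\deg(w)=\Delta(H)$, so $A\not\subseteq S'$ by the choice of $A$ and again $p_A=0$. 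Hence $p_A$ forbids $S_0$ with respect to $(F,L)$.

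To prove the claim, suppose it fails. Then for each $y\in S_0$ there is $w_y\in L$ with $\deg(w_y)=\Delta(H)=c$ and $S_0\setminus\{y\}\subseteq N(w_y)$; write $N(w_y)=(S_0\setminus\{y\})\cup\{x_y\}$ for the unique extra element $x_y\notin S_0\setminus\{y\}$. Here $x_y\ne y$, since $N(w_y)=S_0$ would make $w_y\in L$ a common neighbour of $S_0$, contradicting the hypothesis on $S_0$; thus $x_y\notin S_0$. The vertices $w_y$ are pairwise distinct: if $w_{y_1}=w_{y_2}$ with $y_1\ne y_2$, then $y_2\in S_0\setminus\{y_1\}\subseteq N(w_{y_1})$ but $y_2\notin(S_0\setminus\{y_2\})\cup\{x_{y_2}\}=N(w_{y_2})$. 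So $W:=\{w_y:y\in S_0\}$ has exactly $c$ elements, and each $y\in S_0$ is adjacent to all of $W\setminus\{w_y\}$ (as $y\in S_0\setminus\{y'\}\subseteq N(w_{y'})$ for $y'\ne y$). I would now build a lower bound structure of order $c=c^*(H)=d^*(H)+1$, contradicting \cref{def:d}. Put $a_y:=w_y$. If $W$ has a common neighbour it is unique (a common neighbour $u$ of $W$ lies in $\bigcap_y\bigl((S_0\setminus\{y\})\cup\{x_y\}\bigr)$, forcing $u\notin S_0$ and $u=x_y$ for all $y$); call it $z$, put $L^*:=V(H)\setminus\{z\}$, and note $\bigcap_y N(w_y)=\{z\}$. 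Otherwise put $L^*:=V(H)$ and note $\bigcap_y N(w_y)=\emptyset$. Either way $\bigcap_y N(w_y)\cap L^*=\emptyset$. For each $y$, the set $\{y\}$ is a proper subset of $S_0$, hence has a common neighbour in $L$, so $y$ has a neighbour; and not every neighbour of $y$ is adjacent to all of $N(w_y)$, since such a neighbour $a$ would satisfy $N(a)=N(w_y)$ by the degree bound, yet $y\in N(a)$ whereas $y\notin N(w_y)$. So I can choose $a_y'$ to be a neighbour of $y$ with some $u_y\in N(w_y)\setminus N(a_y')$; then $w_y$ and $a_y'$ are incomparable (witnessed by $y\in N(a_y')\setminus N(w_y)$ and $u_y\in N(w_y)\setminus N(a_y')$), and replacing $w_{y_0}$ by $a_{y_0}'$ for a single $y_0$ yields a set with common neighbour $y_0\in L^*$. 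This is a lower bound structure of order $c$, the required contradiction. (If $S_0=N(v_0)$ for some vertex $v_0$, the claim follows more directly from the rephrasing of \cref{lemma:max-deg-property}: the set $A\in\delta S_0$ it supplies satisfies $A\subseteq N(w)\Rightarrow N(w)\subseteq N(v_0)=S_0$, so a max-degree $w\in L$ would have $N(w)=S_0$ and be a common neighbour of $S_0$ in $L$.)

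The main obstacle is the claim — specifically, turning its failure into a genuine lower bound structure. The delicate points are the correct choice of $L^*$, so that $W$ loses its common neighbour in $L^*$ while every single-coordinate replacement of some $w_{y_0}$ regains one, and the verification that $w_y$ is incomparable with the chosen neighbour $a_y'$ of $y$; it is here that the extremal hypothesis $c^*(H)=\Delta(H)$ is essential, as it forces each $N(w_y)$ to have size exactly $\Delta(H)$.
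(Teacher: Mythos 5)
The key claim underlying your construction is false, so the argument does not go through. You assert that there is always a single set $A \in \delta S_0$ with $A \not\subseteq N(w)$ for every max-degree $w \in L$, and then take the polynomial to be $p_A$ alone. Here is a concrete counterexample. Let $H$ have vertex set $\{1,2,3,w_1,w_2,w_3,a_1,a_2,a_3\}$ with edges $w_1 \sim 2,3,a_1$; $w_2 \sim 1,3,a_2$; $w_3 \sim 1,2,a_3$, and no others. Then $H$ is bipartite with an induced $C_6$ (namely $1\,w_2\,3\,w_1\,2\,w_3$), hence non-bi-arc; $\Delta(H)=3$; $c^*(H)=3$ (witnessed by $S=\{1,2,3\}$, $L=V(H)$, and there is no minimal $4$-set without a common neighbor since only $w_1,w_2,w_3$ have degree $3$); and one checks by direct enumeration (the only candidates for the distinct triple are $\{1,2,3\}$ and $\{w_1,w_2,w_3\}$) that $d^*(H)=2$. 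So $d^*(H)+1=c^*(H)=\Delta(H)$, and the lemma's hypotheses are met. Now take $L=\{w_1,w_2,w_3\}$, $L_1=L_2=L_3=\{1,2,3\}$ (an incomparable set, since $N(1),N(2),N(3)$ are pairwise incomparable), and $S_0=(1,2,3)$, which has no common neighbor in $L$. Here $\delta S_0 = \{\{1,2\},\{1,3\},\{2,3\}\}$ and $\{1,2\}\subseteq N(w_3)$, $\{1,3\}\subseteq N(w_2)$, $\{2,3\}\subseteq N(w_1)$, with all $w_i$ of maximum degree and in $L$, so no $A$ as in your claim exists. The failure is reflected directly in your attempted contradiction: the vertices $w_1,w_2,w_3$ with single replacements $a'_y \in N(y)$ give the right intersections for \emph{one} replacement at a time, but the lower bound structure definition quantifies over all nonempty subsets of replacements, and for at least one double or triple replacement the replaced set becomes $\{w_1,w_2,w_3\}$, which has no common neighbor — so no valid lower bound structure of order $3$ is actually produced, consistent with $d^*(H)=2$. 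The trick in the paper's proof of Lemma~\ref{lemma:max-deg-property} is to use \emph{one and the same} replacement vertex $v$ for every coordinate, which makes multiple replacements automatically harmless because the replaced set shrinks; this is available when $S_0=N(v)$, but you are working with a general $S_0$ that need not be any vertex's neighborhood, and you use a different $a'_y$ per coordinate, so that trick is unavailable.

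Note that the lemma itself is nevertheless true in this example: with $F=\{1,2,3\}^3$ the set $\mathcal{S}=\{S\in F : |S|=3,\ S \text{ has a common neighbor in } L\}$ is empty, so $p_{\{1,2\}}$ alone works. What breaks is your \emph{criterion} for picking $A$, which quantifies over all max-degree $w\in L$ rather than only over those $w$ whose neighborhood actually arises as a $c$-set in $F$. The paper sidesteps exactly this difficulty: instead of a single $p_A$, it searches for a GF$(2)$ linear combination $p(\vect y)=\sum_{A} \mathcal{X}[A]\, p_A(\vect y)$, reduces the existence of $\mathcal{X}$ to the solvability of a linear system with one equation per element of $\mathcal{S}\cup\{S_0\}$, and derives a lower bound structure of order $c^*(H)$ only from a hypothetical linear dependence involving $S_0$ — a situation in which the required single common replacement vertex $z$ is forced to exist by an argument (their ($\star\star$)) that has no analogue in your setup. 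You would need to replace the single-$A$ claim with a statement about a linear combination and rework the contradiction accordingly to salvage the approach.
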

\begin{proof}
Let $S_0=(s_1,\ldots,s_r)$.
To simplify notation in the proof, we will occasionally treat $S_0$ and elements $S \in F$
as sets rather than sequences, as the construction is largely independent of element order.
Note that if $|S_0|<r$, then we can forbid a subsequence of $S_0$.
Also, if $r<c^*(H)$, then we can forbid $S_0$ trivially by the polynomial $p(\vect{y})=\prod_{i=1}^r y_{v_i,s_i}$.
Hence assume $|S_0|=r=c^*(H)=d^*(H)+1$.

For a set $S\in \binom{V(H)}{r}$, we define $\overrightarrow{S}$ to be the 0-1 vector indexed by elements of $\binom{V(H)}{d^*(H)}$, such that for $S'\in \binom{V(H)}{d^*(H)}$, we have $\oS[S']=1$ if and only if $S'\in \delta S$; in other words, $\oS$ can be seen as the indicator vector of $\delta S$.
We will use the following two facts.
\begin{enumerate}[(1.)]
\item For two distinct sets $S_1,S_2\in \binom{V(H)}{r}$, there is at most one $S'\in \binom{V(H)}{d^*(H)}$ such that $\overrightarrow{S_1}[S']=\overrightarrow{S_2}[S']=1$, as $|\delta S_1\cap \delta S_2|\leq 1$, otherwise the sets cannot be distinct.
\item By \cref{lemma:max-deg-property}, for every $S_1\in \mathcal{S}$, there is $S'\in \binom{V(H)}{d^*(H)}$ such that $\overrightarrow{S_1}[S']=1$, and for every other $S_2 \in \mathcal{S}$, we have $\overrightarrow{S_2}[S']=0$.
\end{enumerate}

\paragraph{Defining the polynomial.} Let $\mathcal{S} = \{S \in F \mid |S|=r, S \text{ has a common neighbor in } L\}$.
We will construct a polynomial over GF(2) as a sum of polynomials $p_S(\vect{y})$ from \cref{lemma:poly-local} as
\[
    p(\vect{y}) = \sum_{S \in {\binom{V(H)}{d^*(H)}}} p_S(\vect{y})\cdot \mathcal{X}[S],
\]
where $\mathcal{X}$ is a 0-1 vector indexed by the elements of $\binom{V(H)}{d^*(H)}$.
We then want $p$ to satisfy the following: (i) $p(\vect{y})=1$ if $\vect{y}$ is a choice assignment representing $S_0$,
and (ii) $p(\vect{y})=0$ if $\vect{y}$ is a choice assignment representing some $S \in \mathcal{S}$ (or if $\vect{y}$ uses fewer than $r$ distinct colors on $v_1, \ldots, v_r$).

Such a vector ${\mathcal X}$  exists if there is a solution to the system of equations
\[
 \forall S \in \mathcal{S} \quad \sum_{S' \in \delta S} \mathcal{X}[S'] \equiv_2 0,
\]
and
\[
\sum_{S' \in \delta S_0} \mathcal{X}[S'] \equiv_2 1.
\]
Indeed, if this system has a solution, then $p(\vect{y})\equiv_2 1$ only if $\vect{y}$ uses on $v_1,\ldots,v_r$ a set of $r$ distinct values, which is not included in $\mathcal{S}$, and in particular $p(\vect{y})\equiv_2 1$ for the values in $S_0$.

\paragraph{Existence of a solution to the system of equations.} So it remains to prove that the system has a solution.
Let us rewrite it in the following form.
\begin{equation*}
\begin{cases}
    \oS\cdot \mathcal{X} \equiv_2 0, & \forall S \in \mathcal{S},\\
    \overrightarrow{S_0}\cdot \mathcal{X} \equiv_2 1,
  \end{cases}
\end{equation*}
where $\cdot$ denotes the scalar product.

For contradiction, assume that the system has no solution.
Then there is a non-empty subset $\mathcal{S'}\subseteq \mathcal{S}\cup \{S_0\}$ such that the corresponding vectors form a linearly dependent set, i.e., $\sum_{S\in \mathcal{S'}} \oS \equiv_2 0$ (here $0$ states for the zero-vector).

Observe that:
\begin{description}
\item[($\star$)] For every $S_1\in \mathcal{S'}$, and for every $S'\in \delta S_1$, there is $S_2\in \mathcal{S'}\setminus \{S_1\}$ such that $S' \in \delta S_2$.
\end{description}
Indeed, it must hold that $\sum_{S\in \mathcal{S'}} \oS[S'] \equiv_2 0$, so if $\overrightarrow{S_1}[S']=1$, there must be another $S_2\in \mathcal{S'}$ with $\overrightarrow{S_2}[S']=1$.
In particular, by (2.), $\mathcal{S'}$ must include $S_0$; for every $S\in \mathcal{S}\cap\mathcal{S'}$, for a private subset $S'\in \delta S$, it must hold that $S'\in \delta S_0$.

First assume that there is a set $S_0' \in \delta S_0$ that is \emph{not} the private subset of any $S \in \mathcal{S'}$.
Then, by ($\star$), there is $S \in \mathcal{S}$ such that $S_0' \in \delta S$.
By (1.), for any such set $S$, its private subset $S'$ does not occur in $S_0$, as we have $|\delta S_0\cap \delta S|\leq 1$.
Moreover, since $S'$ is a private subset of $S$, there is no other set in $\mathcal{S}$ (and thus in $\mathcal{S'}$) that contains $S'$.
Then we cannot have $S\in \mathcal{S'}$, otherwise we have $\sum_{S\in \mathcal{S'}} \oS[S']\equiv_2 1$, a contradiction.

So we can now assume that for every $S_0' \in \delta S_0$ there is precisely one set $S \in \mathcal{S'}\setminus\{S_0\}$ such that $S_0' \in \delta S$.
For $S_0'=S_0\setminus\{s_i\}$, let this set be $S_i := S_0\setminus\{s_i\}\cup\{s_i'\}$, where $s_i' \in V(H)$. 
Then the set $\mathcal{S'}$ consists precisely of $S_0$ and $S_i$, for $i \in [r]$.
We claim that this implies that
\begin{description}
\item[($\star\star$)] there is a single vertex $z \in V(H)$ such that $S_i=S_0\setminus\{s_i\}\cup\{z\}$ for any $s_i \in S_0$.
\end{description}
Indeed, let $s_i, s_j \in S_0$ be distinct elements, and consider the set $S''=S_0\setminus\{s_i,s_j\}\cup\{s_i'\} \in \delta S_i$.
By assumption, $S''$ is a subset of an even number of sets in $\mathcal{S'}\cup\{S_0\}$.
On the other hand, $S''$ cannot occur in $\delta S_a$ for any $a \notin \{i,j\}$ since $s_a \in S''$ but $s_a \notin S_a$.
Hence, $S_0\setminus\{s_i,s_j\}\cup\{s_i'\} \in \delta S_j$, which implies $s_i'=s_j'$, and since $i$ and $j$ were arbitrary, this proves ($\star\star$).

Now we define replacement vertices $s_i''$ for $s_i \in S_0$ to construct a lower bound structure of order $r$, which will yield a contradiction.
Let $i \in [r]$ be such that $z \in L_i$, and note that this exists since $z$ is found in $F$.
Then $z$ and $s_i$ are incomparable by assumption, and we may define $s_i''=z$.
For every $j \in [r]$, $j \neq i$, set $s_j''=s_i$; these are incomparable vertices since $S_i$, $S_j$ have common neighbors in $L$ but $S_i\cup\{s_i\}$ and $S_j\cup\{s_j\}$ do not.  
Finally, the set $S_0$ has no common neighbor in $L$ by assumption.
Now consider a set $S=(S_0 \setminus S') \cup \{s'' \mid s \in S'\}$ for a non-empty $S' \subseteq S_0$.
Then, either $S'=\{s_i\}$ and thus $S=S_i$, or there exists some $j \in [r]$, $j \neq i$, such that $s_j \in S'$.
In the latter case $S \subseteq S_j$, and in both cases $S$ has a common neighbor in $L$.
Hence $L$, $(s_1,\ldots,s_r)$, and $(s''_1,\ldots,s''_r)$ form a lower bound structure of order $r=c^*(H)>d^*(H)$, a contradiction.
We conclude that the polynomial $p(\vect{y})$ exists, which completes the proof. 
\end{proof}

Combining \Cref{thm:lower-bound,thm:kernelization-by-polynomials,thm:kernel-delta}, and \Cref{lem:delta-c-plus1,lem:bd-poly}, we conclude the following.

\thmbddeg*


\bibliographystyle{abbrv}
\bibliography{ref} 

\end{document}